\documentclass[11pt,leqno]{article}

\usepackage[T1]{fontenc}
\usepackage{amsmath}
\usepackage{amsthm}
\usepackage{amsxtra}
\usepackage{amsfonts}
\usepackage{amssymb}
\usepackage[margin=1.25in]{geometry}
\usepackage{color,hyperref}
\usepackage{graphicx}
\usepackage{subfig}
\usepackage{cite}
\usepackage{authblk}
\hypersetup{colorlinks,breaklinks,
             linkcolor=blue,urlcolor=blue,
             anchorcolor=blue,citecolor=blue}
\usepackage{color}
\usepackage{array}
\usepackage{booktabs}
\usepackage{enumerate}
\usepackage{dsfont}
\usepackage{bbm}

\DeclareMathOperator*{\argmax}{argmax}
\DeclareMathOperator*{\argmin}{argmin}
\DeclareMathOperator{\Lip}{Lip}

\let\div\relax
\DeclareMathOperator{\div}{div}
\DeclareMathOperator{\dist}{dist}
\DeclareMathOperator{\diag}{diag}
\DeclareMathOperator{\Var}{Var}
\DeclareMathOperator{\diam}{diam}

\newcommand{\Lp}{\mathcal{L}_p}
\renewcommand{\L}{\mathcal{L}}
\newcommand{\Lrw}{\mathcal{L}_{rw}}
\renewcommand{\bar}[1]{{\overline{#1}}}

\newcommand{\one}{\mathds{1}}
\newcommand{\iid}{\emph{i.i.d.}}
\newcommand{\eps}{\varepsilon}
\renewcommand{\epsilon}{\varepsilon}
\renewcommand{\phi}{\varphi}

\newcommand{\F}{X}
\newcommand{\E}{{\mathbb E}}

\newcommand{\N}{\mathbb{N}}

\renewcommand{\P}{{\mathbb P}}

\newcommand{\R}{\mathbb{R}}

\newcommand{\X}{{\mathcal X}}
\newcommand{\Y}{{\mathcal Y}}

\newcommand{\perc}{\%}

\newcommand{\red}{}

\newcommand{\nc}{}

\def\Xint#1{\mathchoice
{\XXint\displaystyle\textstyle{#1}}%
{\XXint\textstyle\scriptstyle{#1}}%
{\XXint\scriptstyle\scriptscriptstyle{#1}}%
{\XXint\scriptscriptstyle\scriptscriptstyle{#1}}%
\!\int}
\def\XXint#1#2#3{{\setbox0=\hbox{$#1{#2#3}{\int}$ }
\vcenter{\hbox{$#2#3$ }}\kern-.6\wd0}}

\def\dashint{\Xint-}

\newtheorem{theorem}{Theorem}
\newtheorem{lemma}[theorem]{Lemma}
\newtheorem{corollary}[theorem]{Corollary}

\newtheorem{proposition}[theorem]{Proposition}
\theoremstyle{definition}
\newtheorem{remark}[theorem]{Remark}
\newtheorem{definition}[theorem]{Definition}

\numberwithin{equation}{section}
\numberwithin{theorem}{section}

\newcommand{\wij}{w_{xy}}
\newcommand{\gij}{g_{xy}}
\newcommand{\gji}{g_{yx}}
\newcommand{\epsi}{\eps_x}
\newcommand{\epsj}{\eps_y}
\newcommand{\di}{d_{x}}
\renewcommand{\dj}{d_{y}}
\newcommand{\as}[1]{\textbf{(A#1)}}
\renewcommand{\k}{k}

\title{Consistency of semi-supervised learning, stochastic tug-of-war games, and the $p$-Laplacian\thanks{{\bf Funding:} Calder was supported by NSF-DMS grant 1944925, the Alfred P. Sloan foundation, the McKnight foundation, and an Albert and Dorothy Marden Professorship. {\bf Source Code:} \url{https://github.com/jwcalder/p-Laplace-consistency}}}
\author{Jeff Calder}
\affil{School of Mathematics \\ University of Minnesota\thanks{{\bf Email:} \textit{jcalder@umn.edu}}}
\author{Nadejda Drenska}
\affil{Department of Mathematics \\ Louisiana State University\thanks{{\bf Email:} \textit{ndrenska@lsu.edu}}}
\begin{document} 

\maketitle

\begin{abstract}
In this paper we give a broad overview of the intersection of partial differential equations (PDEs) and graph-based semi-supervised learning. The overview is focused on a large body of recent work on PDE continuum limits of graph-based learning, which have been used to prove well-posedness of semi-supervised learning algorithms in the large data limit. We highlight some interesting research directions revolving around \emph{consistency} of graph-based semi-supervised learning, and present some new results on the consistency of $p$-Laplacian semi-supervised learning using the stochastic tug-of-war game interpretation of the $p$-Laplacian. We also present the results of some numerical experiments that illustrate our results and suggest directions for future work. 
\end{abstract}

\tableofcontents


\section{Introduction}

Machine learning refers to algorithms that learn how to  perform tasks, like image classification or text generation, from examples or experience, and are not explicitly programmed with step-by-step instructions in the way a human may be instructed to perform a similar task. The recent surge in machine learning and artificial intelligence is being driven by deep learning, which uses deep artificial neural networks and has found applications in nearly all areas of science, engineering and everyday life \cite{goodfellow2016deep}. Modern deep learning excels when provided with massive amounts of training data and computational resources. However, there are many applications, specifically with real-world problems, where labeled training data is hard to come by and number in the hundreds or thousands, instead of millions. For example, in medical image analysis, a human expert, i.e., a highly trained doctor, must annotate images in order to provide data for machine learning algorithms to train in. Obtaining labeled data is thus costly, and there is a tremendous interest in developing machine learning algorithms that perform well with as few labeled examples as possible. 

There are many frameworks for learning from limited data. One effective method is \emph{semi-supervised learning}, which makes use of both labeled and unlabeled data in the learning task. In contrast, the most common type of machine learning, called \emph{fully supervised} learning, makes use of only labeled training data. The labeled training data for a fully supervised classification task includes data/label pairs $(x_1,y_1),\dots,(x_n,y_n)$, where $x_i\in \R^d$ and $y_i \in \R^k$. The goal of fully supervised learning can generally be stated as finding, or ``learning'', a function $f:\R^d\to \R^k$ so that $f(x_i)\approx y_i$ for all $i$. This can be viewed as an \emph{ill-posed} problem, especially when the number of labeled training points $n$ is small, since there are many possible functions $f$ that fit the training data. Furthermore, the ultimate goal is not just to fit the training data, but to learn a function that \emph{generalizes} well to new data that has not been seen before. Semi-supervised learning uses unlabeled data to improve the performance of classification algorithms in the context of small training sets. In many applications, unlabeled data is abundant and easy to obtain. In medical image analysis, for example, unlabeled data would correspond to medical images of a similar type and modality that have not been labeled or annotated by an expert. 

An effective technique for exploiting unlabeled data in semi-supervised learning is to utilize a graph structure, which may be intrinsic to the data, or constructed based on similarities between data points. Graphs encode interdependencies between constituent data points that have proven useful for analyzing and representing high dimensional data. There has been a surge of interest recently in graph-based semi-supervised learning techniques for problems where very few labeled examples are available, which is a setting that is challenging for existing techniques based on Laplacian regularization and harmonic extension. Various methods have been proposed, including $p$-Laplacian regularization, higher order Laplacian methods, Poisson learning, and many others. Many of these algorithms are inspired by insights from the theory of partial differential equations (PDEs) or the calculus of variations, by examining the PDE-continuum limits of discrete graph-based learning algorithms, and their well-posedness properties, or lack thereof. 

While there has been a substantial amount of work on PDE-continuum limits of graph-based learning, there has been relatively little work on the question of \emph{consistency} of graph-based learning, using these well-developed PDE tools. The basic question of \emph{consistency} is whether the machine learning algorithm is making the correct predictions, under a simplified model for the data. The current PDE-continuum limit results simply describe how the algorithms behave in the large data limit, but have not yet, with few exceptions, been used to prove that they work properly --- that is, that they are consistent.  It is arguably the case that consistency is more important than well-posedness in the continuum limit, yet the question has rarely been studied, which we suspect is due to the difficulty in defining what consistency means, and the difficulty in obtaining meaningful results outside of toy settings.

In this paper, we provide a broad overview of semi-supervised learning and its connections to PDEs, and we present some new consistency results for $p$-Laplacian based semi-supervised learning. Our new results make use of the tug-of-war with noise interpretation of the $p$-Laplacian, for which we also provide a brief literature survey.  In particular, one of our results uses the tug-of-war game on a stochastic block model graph, which does not have the geometric structure that is usually required for PDE-based analysis. Our consistency results for the $p$-Laplacian are preliminary results meant to spark new work, and they certainly leave many questions unanswered. Our overall goal in this paper is to highlight a number of open research problems that will benefit from fruitful collaboration between PDE analysts and more theoretically minded machine learning researchers. It would be interesting in future work to improve these results and extend them to other graph-based semi-supervised learning algorithms, and other graph structures. 

\subsection{Outline}

This paper is organized as follows. In Section \ref{sec:tug} we provide a brief overview of the $p$-Laplacian and the stochastic tug-of-war game interpretation. In Section \ref{sec:ssl} we give a thorough survey of graph-based semi-supervised learning, and its connections to PDEs and tug-of-war games, with a particular emphasis on the $p$-Laplacian. Then in Section \ref{sec:results} we present some preliminary results on consistency properties of the $p$-Laplacian using the tug-of-war game interpretation. This includes results both on geometric graphs, and stochastic block models, as well as some numerical results to illustrate the main theorems. Finally we conclude and discuss directions for future work in Section \ref{sec:conc}.  


\section{Tug-of-war games and the \texorpdfstring{$p$}{p}-Laplacian}
\label{sec:tug}

In this section, we provide a brief overview of the $p$-Laplacian and the connection to stochastic tug-of-war games. 

\subsection{The \texorpdfstring{$p$}{p}-Laplacian}

The $p$-Laplacian arises as the Euler-Lagrange equation, or necessary conditions, for the nonlinear potential problem in the calculus of variations
\begin{equation}\label{eq:min_lp}
\min_{u\in W^{1,p}(\Omega)} \int_{\Omega}|\nabla u|^p \, dx,
\end{equation}
where $p\geq 1$ and $\Omega\subset \R^d$, subject to some boundary conditions, such as a Dirichlet condition $u=g$ on $\partial\Omega$. The Euler-Lagrange equation \cite{EvansPDE} for \eqref{eq:min_lp} is
\begin{equation}\label{eq:plap_def}
\Delta_p u := \div\left( |\nabla u|^{p-2}\nabla u\right) = 0 \ \ \text{ in } \ \ \Omega,
\end{equation}
and we call $\Delta_p u$ the $p$-Laplacian of $u$. We must take $p\geq 1$ to ensure \eqref{eq:min_lp} is convex and admits a minimizer. The case of $p=2$ corresponds to the usual Laplacian $\Delta_p=\Delta$. When  $1 \leq p < 2$ the diffusion is singular when $\nabla u=0$, while for $p>2$ the diffusion becomes degenerate when $\nabla u=0$; both cases lead to drastically different properties and regularity theory compared to the uniformly elliptic case of $p=2$ \cite{evans1982new}. Functions that satisfy $\Delta_p u =0$ are called \emph{$p$ harmonic}.

If we expand the divergence in \eqref{eq:plap_def}, we find that any $p$-harmonic function $u$ satisfies (provided $\nabla u\neq 0$ when $p<2$)
\begin{equation}\label{eq:neargame}
\Delta_p u = |\nabla u|^{p-2}(\Delta u + (p-2)\Delta_\infty u )  = 0 \ \ \text{ in } \ \ \Omega,
\end{equation}
where $\Delta_\infty u$ is the $\infty$-Laplacian, defined by 
\begin{equation}\label{eq:inf_lap}
\Delta_\infty u = \frac{1}{|\nabla u|^2}\nabla u \cdot \nabla^2 u \nabla u = \frac{1}{|\nabla u|^2}\sum_{i,j=1}^d u_{x_ix_j}u_{x_i}u_{x_j},
\end{equation}
and $\nabla^2 u$ is the Hessian of $u$. The $\infty$-Laplacian is so named because it is the $p\to \infty$ limit of the $p$-Laplacian in the sense that
\[\Delta_\infty u = \lim_{p\to \infty} \frac{1}{p-2}|\nabla u|^{2-p}\Delta_p u,\]
provided again that $\nabla u \neq 0$, which follows directly from \eqref{eq:neargame}. It is possible to interpret the $\infty$-Laplacian as the Euler-Lagrange equation for a variational problem like \eqref{eq:min_lp} with $p=\infty$; we refer the reader to \cite{aronsson2004tour} for more details. For a more detailed overview of the $p$-Laplacian we also refer to \cite{lindqvist2019notes}.

\subsection{Tug-of-war games}

When $p=2$ there is a well-established classical connection between random walks, or Brownian motions, and harmonic functions. Indeed, any harmonic function $u$ satisfies the mean value property \cite{EvansPDE}
\begin{equation}\label{eq:mvp}
u(x_0) = \dashint_{B(x_0,\epsilon)}u(y) \, dy,
\end{equation}
where $\epsilon>0$ is any value for which $B(x_0,\epsilon)\subset \Omega$, and the notation $\dashint$ means
\[\dashint_V u \, dx = \frac{1}{|V|}\int_{V} u \, dx.\]
We can interpret the right hand side of \eqref{eq:mvp} as the expectation of $u(X)$, where $X$ is a random variable uniformly distributed on the ball $B(x_0,\eps)$. Thus, if we define a random walk $X_1,X_2,\dots,$ on $\Omega$, which is a sequence of random variables for which $X_0=x_0\in \Omega$ and, conditioned on $X_{k}$, $X_{k+1}$ is uniformly distributed on $B(X_{k},\epsilon)$, we have
\[\E\left[ u(X_{k+1}) \, | \, X_{k}\right] = \dashint_{B(X_{k},\epsilon)}u(y) \, dy = u(X_{k}),\] 
provided $B(X_k,\eps)\subset \Omega$, provided we, for the moment, ignore the boundary $\partial\Omega$. Thus, since $u$ is harmonic we have that $Z_k = u(X_k)$ is a \emph{martingale} \cite{williams1991probability}. This connection to probability theory allows simple alternative proofs of various estimates in harmonic function theory, such as Harnack's inequality and gradient estimates \cite{lewicka2022robin,lewicka2022robinII}, and have found applications in proving gradient estimates on graphs as well \cite{calder2022Lip}. 

Over the past 15 years, there has been significant interest in extending these martingale techniques to the $p$-Laplacian. To do this, however, the definition \eqref{eq:plap_def}  of the $p$-Laplacian is not very useful. Instead, provided that $\nabla u \neq 0$, we can drop the  $|\nabla u|^{p-2}$ term in \eqref{eq:neargame} to obtain the equation
\begin{equation}\label{eq:homo_p}
\Delta u + (p-2) \Delta_\infty u = 0,
\end{equation}
and restrict our attention to $p\geq 2$. Given a smooth function $u$, we can average the Taylor expansion for $u$ about $x_0$ to obtain
\[\dashint_{B(x_0,\epsilon)} u(y) \, dy = u(x_0)  + \frac{\epsilon^2}{2(d+2)}\Delta u(x_0) + O(\epsilon^3),\]
and so 
\begin{equation}\label{eq:mvp2}
\epsilon^2 \Delta u(x_0) = 2(d+2) \dashint_{B(x_0,\epsilon)}u(y) \, dy - 2(d+2)u(x_0) + O(\epsilon^3).
\end{equation}
Noting that the $\infty$-Laplacian is the second derivative of $u$ in the direction of the gradient $v = \frac{\nabla u(x_0)}{|\nabla u(x_0)|}$, we have
\begin{align}\label{eq:mvpi}
\epsilon^2\Delta_\infty u(x_0) &= \left[u\left( x_0 + \epsilon v\right) - u(x_0)\right] - \left[ u(x_0) - u\left( x_0 - \epsilon v\right)\right] + O(\epsilon^3) \\
&= \max_{B(x_0,\epsilon)}u + \min_{B(x_0,\epsilon)}u - 2u(x_0) + O(\epsilon^3).\notag
\end{align}
Inserting \eqref{eq:mvp2} and \eqref{eq:mvpi} into \eqref{eq:homo_p} we see that if $u$ is a smooth $p$-harmonic function, i.e., satisfying \eqref{eq:homo_p}, then 
\begin{equation}\label{eq:mvp_pharm}
u(x_0) = \alpha\, \dashint_{B(x_0,\epsilon)}u(y) \, dy  + \frac{1-\alpha}{2}\left(\max_{B(x_0,\epsilon)}u + \min_{B(x_0,\epsilon)}u\right)  +\red O(\epsilon^3)\nc.
\end{equation}
as $\eps \to 0$ where $\alpha = \frac{d+2}{d+p} \in [0,1]$ since $p\geq2$. Thus, while $p$-harmonic functions do not satisfy a mean value property for any size ball, \eqref{eq:mvp_pharm} gives an asymptotic version of a mean value property, with min and max terms arising from the $\infty$-Laplacian. In fact, the asymptotic mean value property \eqref{eq:mvp_pharm} \emph{characterizes} $p$-harmonic functions \cite{manfredi2010asymptotic}. It is also an interesting equation to study in its own right; when the $O(\epsilon^3)$ is dropped the functions are called \emph{$p$-harmonious} and studied in detail in \cite{manfredi2012definition}.

The mean value property \eqref{eq:mvp_pharm} suggests a way to adapt the random walk construction earlier to $p$-harmonic functions. We simply define a stochastic process $X_0,X_1,X_2,\dots$ so that, given $X_k$, $X_{k+1}$ is defined in the following way: with probability $\alpha$ we take a random walk step, so $X_{k+1}$ is uniformly distributed on $B(X_k,\eps)$, with probability $\frac{1-\alpha}{2}$ we set $X_{k+1} = \argmax_{B(X_k,\epsilon)}u$, and likewise with probability $\frac{1-\alpha}{2}$ we set \red $X_{k+1} = \argmin_{B(X_k,\epsilon)}u$\nc. When the $\argmax$ or $\argmin$ is not unique, we make a rule to break ties, which can be deterministic or random. By the definition of this stochastic process, for any continuous function $u$ we have
\[\E[u(X_{k+1}) \, | \, X_k] = \alpha\, \dashint_{B(X_k,\epsilon)}u(y) \, dy  + \frac{1-\alpha}{2}\left(\max_{B(X_k,\epsilon)}u + \min_{B(X_k,\epsilon)}u\right).\]
In particular, if $u$ is smooth and $p$-harmonic, then the discussion above shows that
\[\E[u(X_{k+1}) \, | \, X_k] = u(X_k) + \red O(\epsilon^3)\nc.\]
Thus, we have recovered the martingale property, at least asymptotically as $\eps\to 0$. 

The stochastic process introduced above is often described as a two player \emph{tug-of-war} game with noise. The game involves a token $X_k$ that is moved by two players and by random noise. Player I is trying to move the token to locations that maximize $u$, while the goal of player II is to move the token to places that minimize $u$. The game is played by flipping two coins. The first comes up heads with probability $\alpha$, and tails with probability $1-\alpha$. If the first coin comes up heads, the token $X_k$ is moved to a uniformly random point $X_{k+1}$ in the ball $B(X_k,\epsilon)$. If the first coin comes up tails, then the game switches to a \emph{tug-of-war} game, where a second unbiased coin is flipped to decide which player gets to move the token to decide $X_{k+1}$. Whichever player wins the second coin flip is allowed to move the token wherever they like in the ball $B(X_k,\epsilon)$; the idea being that player I will move the token to maximize $u$, while player II will minimize. The exact goal of each player depends on the boundary condition; they may want to maximize/minimize the value of $u$ when the game stops by hitting the boundary $\partial \Omega$, in which case only the values of $u$ on or near the boundary need to be specified in the game and the players are assumed to play optimal strategies to maximize or minimize the payoff --- the value of $u$ --- at the end of the game. The random walk step, and the randomness in choosing between players, is interpreted as \emph{noise}, hence the term tug-of-war with noise. 

There is a close connection between tug-of-war games and non-local elliptic equations. Let us define the nonlocal $2$ and $\infty$ Laplacians by
\[\Delta^\eps_2 u(x) = \dashint_{B(x_0,\epsilon)}u(y) \, dy - u(x), \ \ \text{and} \ \ \Delta^\eps_\infty u(x) =  \frac{1}{2}\left(\max_{B(x,\epsilon)}u + \min_{B(x,\epsilon)}u\right) - u(x).\]
Then if we drop the error term in \eqref{eq:mvp_pharm} and rearrange, we arrive at the equation
\begin{equation}\label{eq:nonlocal_plaplacian}
\Delta^\eps_p u:=\alpha \Delta^\eps_2 u + (1-\alpha)\Delta^\eps_\infty u = 0.
\end{equation}
The operator $\Delta^\eps_p$ on the left above is a nonlocal approximation to the $p$-Laplacian that arises from the tug-of-war game perspective, and is closely related to the graph $p$-Laplacian discussed in Section \ref{sec:ssl}. We can also consider a corresponding nonlocal boundary value problem
\begin{equation}\label{eq:nonlocal_plaplace}
\left\{
\begin{aligned}
\Delta^\eps_p u &=  0,&& \text{in } \Omega_\eps\\
u &= g,&& \text{on } \partial_\eps \Omega,
\end{aligned}
\right.
\end{equation}
where $g$ is given, $\partial_\eps \Omega = \partial\Omega + B(0,\epsilon)$ and $\Omega_\eps = \Omega \setminus \partial_\eps \Omega$. If we choose the stopping time $\tau$ to be the first time that the tug-of-war game hits the boundary strip $\partial_\eps \Omega$, then the martingale property and the optional stopping theorem yield
\[u(x) = \E[u(X_\tau) \, | \, X_1 = x].\]
This gives a representation formula for solutions of the non-local $p$-Laplace equation \eqref{eq:nonlocal_plaplacian} that is useful for studying properties of the solution through martingale techniques. Properties that are independent of the nonlocal scale $\epsilon>0$ are inherited by $p$-harmonic functions by sending $\eps\to 0$. 

Tug-of-war games for the $p$-Laplacian were originally introduced in \cite{peres2008tug,peres2009tug} with a version that holds for $1 \leq p \leq \infty$ using ideas from earlier work on deterministic games for the $1$-Laplacian \cite{kohn2006deterministic}. The version for $p\geq 2$ described above was introduced and studied in \cite{manfredi2012dynamic}. This work motivated a study of the game-theoretic $p$-Laplacian on graphs \cite{manfredi2015nonlinear} as well as finite difference approaches for numerically approximating solutions \cite{oberman2013finite,oberman2005convergent,armstrong2012finite}. The tug-of-war interpretation of the $p$-Laplacian has led to simple alternative proofs of regularity for $p$-harmonic functions, including the Harnack inequality and gradient estimates \cite{luiro2013harnack,attouchi2021gradient}. In addition, many variants of tug-of-war games have been introduced, including games with bias \cite{peres2010biased},  mixed Neumann/Dirichlet boundary conditions \cite{charro2009mixed}, obstacle problems \cite{lewicka2017obstacle}, nonlocal tug-of-war for the fractional $p$-Laplacian \cite{lewicka2022non}, time dependent equations \cite{han2022time}, and variants on the core structure of the game \cite{lewicka2018random}. We also refer the reader to the survey article \cite{lewicka2014game} and two recent books on tug-of-war games \cite{parviainen2024notes,lewicka2020course} for more details.


\section{Semi-supervised learning and PDEs}
\label{sec:ssl}

In this section, we overview graph-based semi-supervised learning and the recent connections to PDEs in the continuum limit, with a specific focus on the $p$-Laplacian and tug-of-war games. 

\subsection{Graph-based semi-supervised learning}

\begin{figure}[!t]
\centering
\subfloat[Data]{\includegraphics[width=0.32\textwidth]{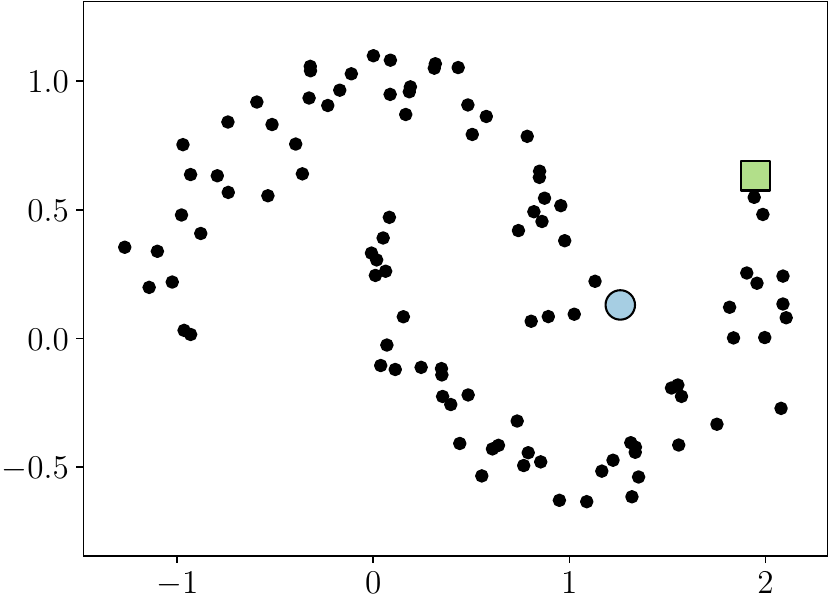}\label{fig:data}}
\subfloat[Fully supervised]{\includegraphics[width=0.32\textwidth]{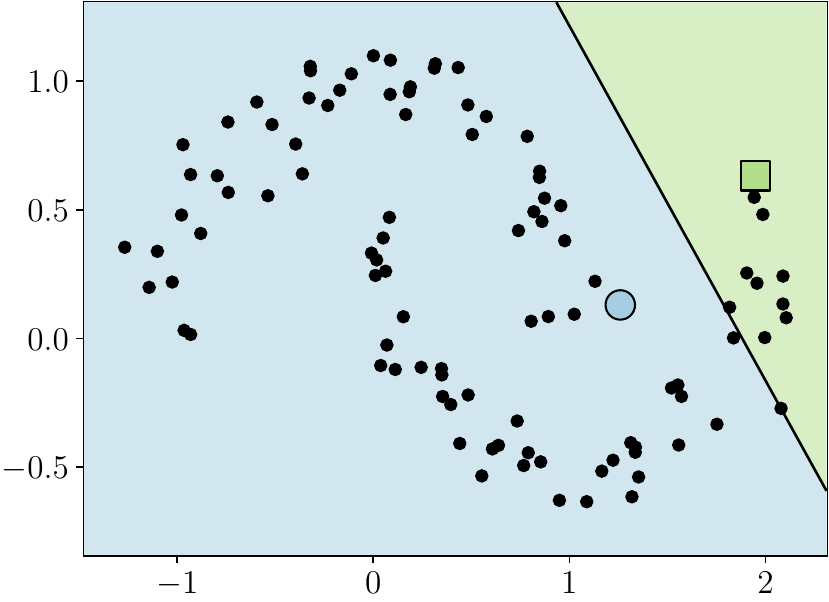}\label{fig:fsl}}
\subfloat[Semi-supervised]{\includegraphics[width=0.32\textwidth]{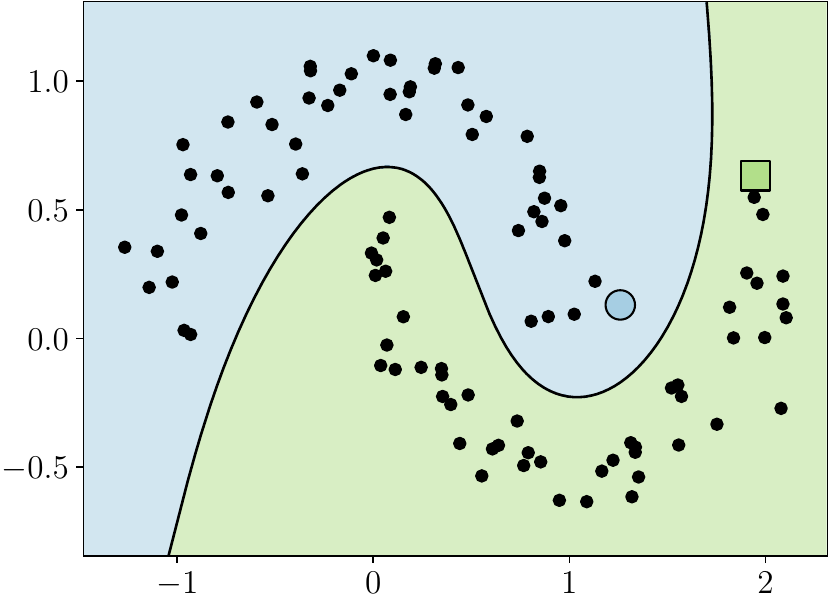}\label{fig:ssl}}
\caption{A toy example comparing fully and semi-supervised learning. In (a) we show a data set with 2 labeled examples --- the blue circle and green square --- along with $98$ unlabeled data points --- the black dots. In (b) we show the decision regions from training a fully supervised classification algorithm, while in (c)  we show the decision regions for a semi-supervised learning algorithm, which uses the unlabeled data to inform the decision boundary.}
\label{fig:ssl_vs_fully}
\end{figure}
Semi-supervised learning uses both labeled and unlabeled data.  As a toy example, in Figure \ref{fig:data} we show the famous two-moons data set with two labeled data points; the blue circle and green square. The black points are the \emph{unlabeled data}, which are not used by fully supervised learning, and will be discussed momentarily. With only two labeled data points, it is difficult to learn a general function $f$ that will correctly classify new data points. In Figure \ref{fig:fsl} we show the result of training a linear kernel support vector machine (SVM) with these two data points, which finds the linear decision boundary with maximal margin --- in this case the line equidistant\footnote{The line does not appear orthogonal to the vector between the training points because the axes are scaled differently.} from the two training points. Given no other information, this may be a reasonable thing to do. However, suppose now that we also have access to the black points, which are \emph{unlabeled} data points. That is, we have access to the coordinates $x_i\in \R^2$, but not the label $y_i$, which in this case is $y_i\in \{0,1\}$ for binary classification, and these unlabeled data points are the \emph{only} data points we will want to apply our classifier to in the future. In this case, the linear SVM decision boundary is a poor choice, since it cuts through a dense region of the unlabeled data --- the lower moon --- where we may not expect a true decision boundary to lie. Instead, we should seek to place the decision boundary in sparse regions between clusters. In Figure \ref{fig:ssl} we show the result of using a semi-supervised learning algorithm on the same data set, which correctly separates the two moons.\footnote{To be precise, we applied graph-based Poisson learning, described below, to obtain label predictions for all unlabeled points, and then we trained a radial basis kernel SVM using the label predictions. } 

There are many ways to incorporate unlabeled data into a machine learning algorithm. One common and successful approach is to utilize \emph{graph-based} learning, where each node in the graph corresponds to a data point, and the edges in the graph correspond to either intrinsic relationships between data points, or record some notion of similarity between data points. Many types of data have intrinsic graph structures, like molecules in drug discovery problems, citation datasets, or networks like the internet. In other problems, like image classification, the graph structure is not intrinsic, but can be constructed as a \emph{similarity graph}, in which similar data points are connected by an edge with a weight that encodes the degree of similarity. Let $\X=\{x_1,\dots,x_n\}$ be a set of data points, where $x_i\in \R^d$.  Here, we assume we have a graph with vertex set $\X$ described by a symmetric weight matrix $W=W^T\in \R^{n\times n}$ in which each entry $\wij$, for $x ,y \in \X$, is non-negative and encodes a notion of similarity between $x $ and $y $. A zero weight $\wij =0$ indicates there is no edge between $x $ and $y $, while a positive weight $\wij >0$ indicates the presence of an edge, and the larger the value the stronger the edge. We let $N_x\subset \X$ denote the graph neighbors of the vertex $x\in \X$, which is defined by
\begin{equation}\label{eq:neigh}
N_x = \{y\in \X\, : \, w_{xy}>0\}.
\end{equation}

A common way to construct a graph over a data set is the geometric graph construction
\begin{equation}\label{eq:eps_weights}
\wij  = \eta\left( \frac{|x -y |}{\eps}\right),
\end{equation}
where $\eps>0$ is the graph \emph{bandwidth} and $\eta:[0,\infty)\to [0,\infty)$ is a nonnegative function that is typically decreasing with compact support. A common choice is Gaussian weights where $\eta(t) = e^{-t^2}$, with a possible truncation to zero at some distance $t=\tau$. One issue with the geometric graph construction is that one has to choose a very large value for the graph bandwidth $\eps>0$ to ensure the graph is connected, even in the sparsest regions of the data set, which leads to a very non-sparse weight matrix $W$ that is difficult to work with computationally. To address this, it is common to adjust the bandwidth $\eps$ locally to reflect the density (or sparsity) of the data set, which results in various types of $k$-nearest neighbor graphs. One common construction is
\[\wij =  \eta\left( \frac{|x -y |}{\sqrt{\epsi\epsj}}\right),\]
where $\eps_i$ is the distance from $x $ to its $k^{\rm th}$ nearest neighbor, though other choices are possible. Throughout this section, we'll generally assume a graph with geometric weights of the form \eqref{eq:eps_weights}. 

\subsection{Laplacian regularization}

Suppose now that we have a graph structure over our data set, given by a weight matrix $W=(\wij )\in \R^{n\times n}$ along with a subset of graph nodes $\Gamma\subset \X$ that are labeled, with corresponding labels $y=g(x)\in \R$ for $g:\Gamma\to \R$ a given labeling function, which we are taking to be scalar only for simplicity of discussion. The seminal approach in graph-based semi-supervised learning is \emph{Laplacian regularization}, initially proposed in \cite{zhu2003semi}, which propagates the labels from $\Gamma$ to the rest of the graph by solving the optimization problem 
\begin{equation}\label{eq:graph_dirichlet}
\min_{u:\X\to\R}\sum_{x ,y \in \X} \wij (u(x ) - u(y ))^2,
\end{equation}
subject to the constraint that $u(x ) = g(x )$ for $x  \in \Gamma$. The real-valued solution $u(x )$ is then thresholded to the nearest label to make a class prediction.  The energy in \eqref{eq:graph_dirichlet} is called the \emph{graph Dirichlet energy} and the minimizer $u:\X\to \R$ is exactly the harmonic extension of the labeled data, which satisfies the boundary value problem 
\begin{equation}\label{eq:laplace_learning}
\left\{
\begin{aligned}
\L u(x ) &= 0,&& \text{if } x  \in \X\setminus \Gamma \\
u(x ) &= g(x ),&& \text{if } x \in \Gamma,
\end{aligned}
\right.
\end{equation}
where $\L$ is the graph Laplacian defined by 
\begin{equation}\label{eq:graph_laplacian}
\L u(x ) = \sum_{y \in \X} \wij (u(x ) - u(y )).
\end{equation}
In other words, we are seeking the \emph{smoothest} function --- in this case harmonic --- that correctly classifies the labeled data points. In semi-supervised learning, this is often called the \emph{semi-supervised smoothness assumption} \cite{chapelle2006}, which stipulates that the labeling is smooth in high density regions of the data set. Since the initial development in \cite{zhu2002learning,zhu2003semi}, graph-based learning using Laplacian regularization, and variations thereof, has grown into a wide class of useful techniques in machine learning \cite{bozorgnia2023graph,ando2007learning, zhou2004learning, zhou2004random, zhou2005learning, zhou2004ranking,he2004manifold, he2006generalized, wang2013multi, xu2011efficient, yang2013saliency,belkin2004semi,belkin2004regularization,bengio2006label,zhou2011semi,wang2013dynamic,ham2005semisupervised,lee2013graph,calder2020poisson,calder2020properly,belkin2002using,cervino2023learning}.

\red
Graph harmonic functions satisfy a mean value property, similar to harmonic functions on $\R^d$. Indeed, if we rearrange the condition $\L u(x) = 0$ we obtain
\begin{equation}\label{eq:mvp_graph}
u(x) = \frac{1}{d_x}\sum_{y\in \X}w_{xy}u(y),
\end{equation}
where $d_x = \sum_{y \in \X} \wij $ is the degree of vertex $x$. This is a local version of the continuous mean value property for harmonic functions. It says that the value $u(x)$ of a graph harmonic function at a node $x$ is equal to the weighted average of the values $u(y)$ at neighboring nodes $y$, weighted by the graph edge weights $w_{xy}$. In fact, one way to solve the equation \eqref{eq:laplace_learning} is by iterating the mean value property
\begin{equation}\label{eq:mvp_iterate}
u_{k+1}(x) = \frac{1}{d_x}\sum_{y\in \X}w_{xy}u_k(y),
\end{equation}
for $x\in \X\setminus \Gamma$, fixing $u_k(x)=g(x)$ for $x\in \Gamma$. This is exactly the classical Jacobi iteration for solving a linear system, and while it is not the fastest technique --- the preconditioned conjugate gradient method is much faster --- it has a nice interpretation as \emph{propagating} labels using the neighborhood structure of the graph, and is thus called \emph{label propagation} in the literature; see \cite{zhu2002learning}. Equation \eqref{eq:mvp_iterate} can also be viewed as a diffusion equation on the graph.
\nc

Laplacian regularized learning has an important connection to random walks on graphs. Let $X_1,X_2,\dots$ be a random walk on the vertices $\X$ of the graph, with transition probabilities $P=D^{-1}W$, where $D=\diag(d_{x_1},\dots,d_{x_n})$. That is, the probability $p_{x y }$ of the random walker moving from vertex $x $ to vertex $y $ is given by $p_{x y } = \di^{-1}\wij $. Given a function $u:\X\to\R$ on the vertices of the graph, we can compute
\[\E[u(X_{k+1}) - u(X_k) \, | \, X_k=x ] = \sum_{y \in \X} p_{x y }(u(y ) - u(x )) = -\frac{1}{d_{x }}\L u(x ).\]
The object on the right hand side is called the \emph{random walk} graph Laplacian, and we denote it by $\Lrw$; that is
\[\Lrw u(x ) = \frac{1}{d_{x }}\L u(x ).\]
The computation above thus yields
\begin{equation}\label{eq:martingale_laplace}
\E[u(X_{k+1}) \, | \, X_k] = u(X_k) - \Lrw u(X_k),
\end{equation}
and thus, the random walk graph Laplacian is the generator for the random walk. In particular, if $u$ is graph harmonic, so that $\L u(x )=0$, then $u(X_k)$ is a \emph{martingale}. Defining the stopping time $\tau$ as the first time the random walk hits the labeled data set $\Gamma$, the optional stopping theorem yields that the solution $u$ of Laplace learning \eqref{eq:laplace_learning} satisfies
\begin{equation}\label{eq:randomwalk_interp}
u(x ) = \E[u(X_\tau) \, | \, X_1 = x ] = \sum_{y \in \Gamma} \P(X_\tau = y  \, | \, X_1=x )g(y ).
\end{equation}
Thus, the solution of Laplacian regularized graph-based learning \eqref{eq:laplace_learning} can be interpreted as a weighted average of the given labels $g(y )$, weighted by the probability of the random walk hitting $y \in \Gamma$ first, before hitting any other labeled data point. This is a reasonable thing to do for semi-supervised learning at an intuitive level as well.  Indeed, if each cluster in the data set is well separated from the others, then provided the label rate is not too low, the random walk is likely to stay in the cluster it started in long enough to hit a labeled data point in that cluster, giving the correct label and propagating labels well within clusters.

At moderately low label rates, Laplacian regularization performs very well for graph-based semi-supervised learning. However, the performance can become quite poor at extremely low label rates, which was first noted in \cite{nadler2009semi} and later in \cite{el2016asymptotic}. When there are very few labels, the solution of the Laplace learning problem \eqref{eq:laplace_learning} prefers to be nearly constant over the whole graph, with sharp spikes at the labeled data points, as depicted in Figure \ref{fig:tp_laplace}. This is the configuration that gives the least graph Dirichlet energy, since the spikes are relatively inexpensive when the number of labeled data points is small. From the random walk perspective, when there are very few labels, the random walk takes very long to hit a labeled data point, so the stopping time $\tau$ is very large, and the distribution of the random walker approaches the limiting invariant distribution on the graph, which is proportional to the degree. Thus, by \eqref{eq:randomwalk_interp} we have
\begin{equation}\label{eq:randomwalk_limit}
u(x ) \approx \frac{\sum_{y \in \Gamma} \dj g(y )}{\sum_{y  \in \Gamma}\dj}
\end{equation}
at very low label rates. When the labels are binary $g(y)\in \{-1,1\}$, the sign of the right hand side of \eqref{eq:randomwalk_limit} is constant over the graph, and simply chooses the class whose labeled points have the largest cumulative degree, which is one way to explain the very poor performance of Laplace learning at low label rates. 

Even in settings where moderate amounts of labeled data will eventually be acquired, it is often necessary to consider, at least initially, problems with very few labels where Laplace learning is not useful. One of those areas is \emph{active learning}, which refers to machine learning algorithms that address the problem of choosing the \emph{best} data points to label in order to obtain superior model performance with as few labels as possible \cite{settles2009active}. Active learning methods  incorporate a human-in-the-loop that can be queried to label new data points as needed, and the goal is to achieve the highest accuracy with as few labels as possible by intelligently choosing the next point to label, often in a sequential manner. Active learning procedures normally start out with extremely small labeled sets that slowly grow during the acquisition of new labeled data points, which requires graph-based semi-supervised learning algorithms that perform well both with small and moderate amounts of labeled data. A number of graph-based active learning methods have been proposed \cite{zhu_combining_2003, ji_variance_2012, ma_sigma_2013, qiao_uncertainty_2019, miller2021model, murphy_unsupervised_2019}, including recent methods inspired by the PDE-continuum limits discussed below \cite{miller2023active}, and recent applications have been found in image classification problems \cite{enwright2023deep,chapman2023batch,brown2023contrastive,miller2022graphbased}.

\subsection{PDE-inspired insights and algorithms}

Another interpretation of the poor performance of Laplace learning at low label rates is through PDE continuum limits. To explain this, we need to place some additional assumptions on the graph construction. We assume the vertices $\X=\{x_1,\dots,x_n\}$ of the graph are \iid~random variables distributed on a domain $\Omega \subset \R^d$ with a continuous and positive density $\rho:\Omega\to (0,\infty)$. We assume the random geometric graph construction, which uses weights \eqref{eq:eps_weights} for a given bandwidth $\eps>0$. This is called a \emph{random geometric graph}, and in this setting, the expectation of the graph Dirichlet energy in \eqref{eq:graph_dirichlet} is
\begin{equation}\label{eq:dirichlet_continuum}
n^2\int_\Omega\int_\Omega \eta\left( \frac{|x-y|}{\eps}\right)(u(x) - u(y))^2\rho(x)\rho(y) \, dxdy \approx C_\eta n^2 \eps^{d+2}\int_\Omega \rho^2 |\nabla u|^2 \, dx,
\end{equation}
provided $u:\Omega\to \R$ is a smooth function and $\rho$ is Lipschitz, where $C_\eta$ is a positive constant depending only on $\eta$. The asymptotics in \eqref{eq:dirichlet_continuum} can be verified formally with Taylor expansions of $u$ and $\rho$, and can also be established rigorously in the language of Gamma convergence, as in \cite{garcia2016continuum}. Thus, the continuum version of Laplace learning \eqref{eq:laplace_learning} is the boundary value problem 
\begin{equation}\label{eq:laplace_continuum}
\left\{
\begin{aligned}
\div(\rho^2 \nabla u) &= 0,&& \text{in }  \Omega\setminus \Gamma\\
u &= g,&& \text{on } \Gamma,
\end{aligned}
\right.
\end{equation}
where $\Gamma\subset \Omega$ is purposely vaguely specified, and should encode some notion of the continuum limit of the labeled data set, and $g:\Gamma\to \R$ the continuum limit of the values of the labels. When $\Gamma$ does not contain the entire boundary $\partial \Omega$, we also must impose homogeneous Neumann conditions on $\partial \Omega \setminus \Gamma$. 

The presence of the squared density $\rho^2$ as a diffusion coefficient in the PDE continuum limit illustrates how Laplace learning is able to diffuse labels quickly in high density regions where $\rho$ is large, and place decision boundaries, which correspond to sharp transitions in the label function $u$, in sparse regions where $\rho$ is small. However, the presence of the labeled set $\Gamma$ as a boundary condition is problematic, since \eqref{eq:laplace_continuum} is well-posed only when $\Gamma$ is large enough, and regular enough. \red This is due to the fact that the Sobolev function space $H^1(\Omega)$, like the $L^p(\Omega)$ function spaces, consists of functions that are defined only up to sets of measure zero. So the values of $u(x)$ at particular points $x\in \Omega$ are not well-defined. This means, for example, that $\Gamma$ cannot be a collection of isolated points, as in Figure \ref{fig:tp_laplace}, otherwise \eqref{eq:laplace_continuum} is ill-posed. In order for \eqref{eq:laplace_continuum} to be well-posed, the boundary set $\Gamma$ must be \emph{large enough} and sufficiently regular so that we can intepret what it means to \emph{evaluate} $u(x)$ for $x\in \Gamma$. Such results are called \emph{trace theorems} in the PDE literature, and the restriction $u\vert_\Omega$ is the trace of $u$ on $\Gamma$; we refer to \cite{EvansPDE} for more details.  \nc

\begin{figure}[!t]
\centering
\subfloat[Laplace \cite{zhu2003semi}]{\includegraphics[height=0.2\textheight]{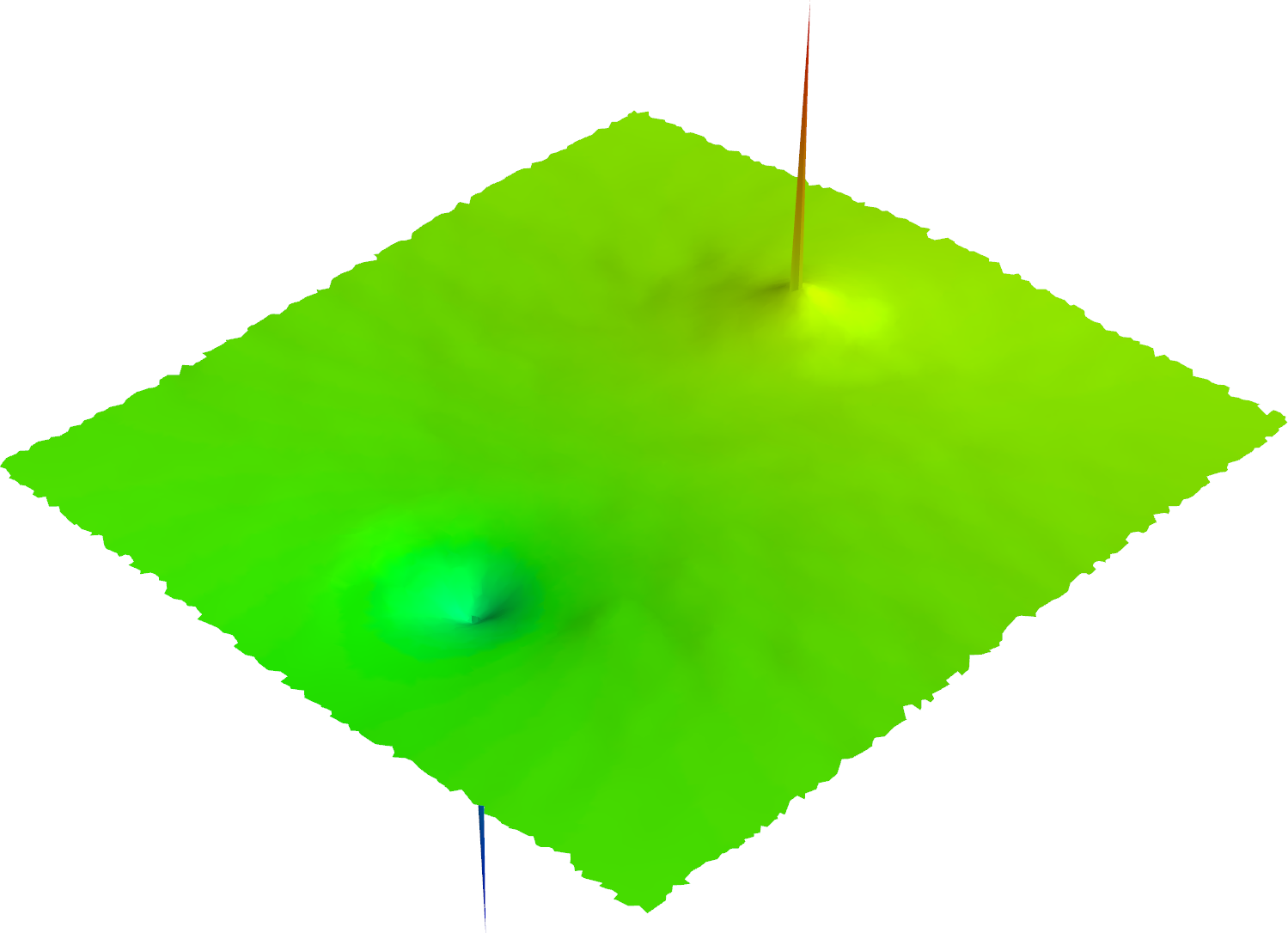}\label{fig:tp_laplace}}
\subfloat[$p$-Laplace, $p=2.05$ \cite{flores2022algorithms}]{\includegraphics[height=0.2\textheight]{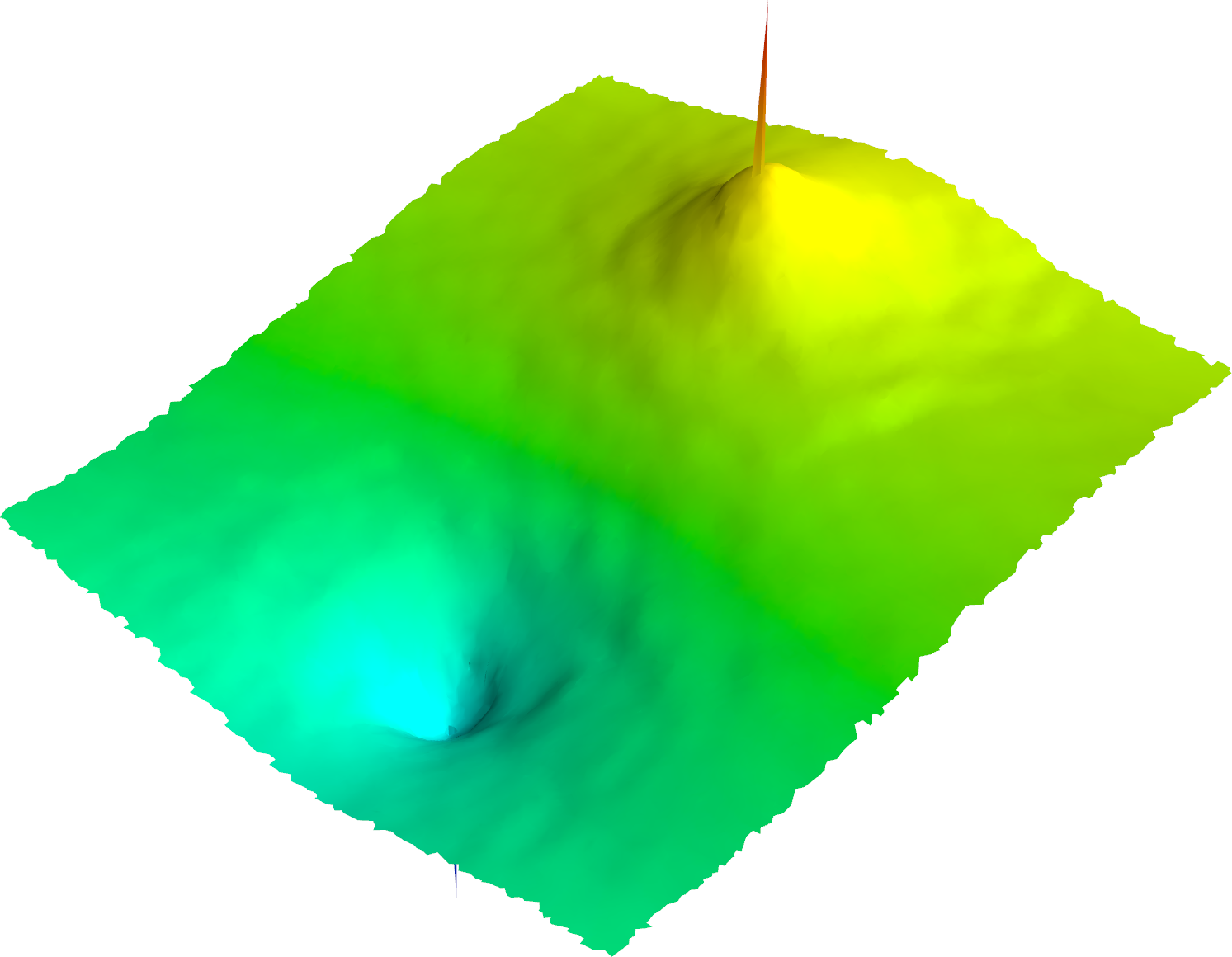}\label{fig:tp_plaplace1}}
\subfloat[$p$-Laplace, $p=2.25$ \cite{flores2022algorithms}]{\includegraphics[height=0.2\textheight]{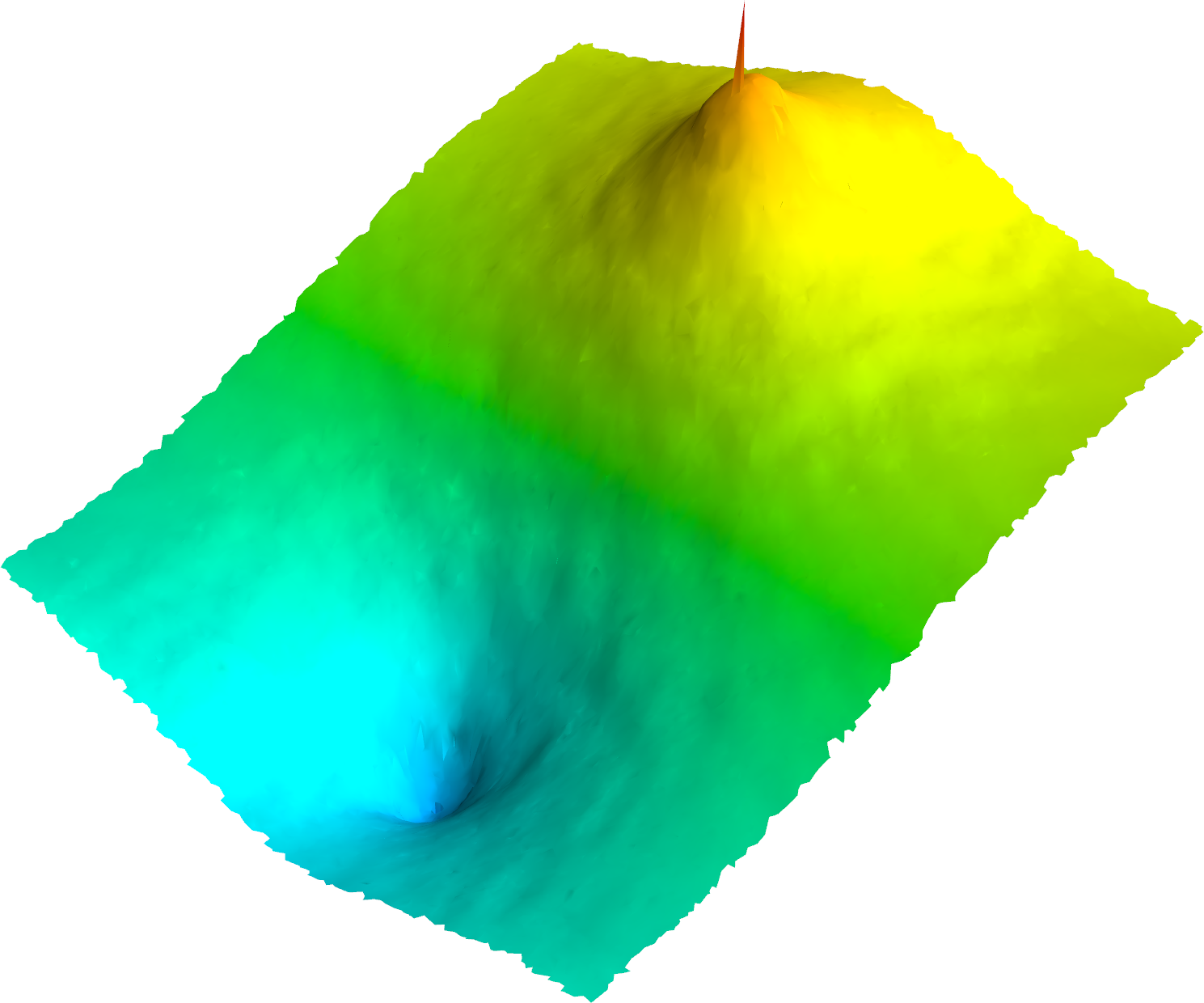}\label{fig:tp_plaplace2}}\\
\subfloat[WNLL \cite{shi2017weighted}]{\includegraphics[height=0.25\textheight]{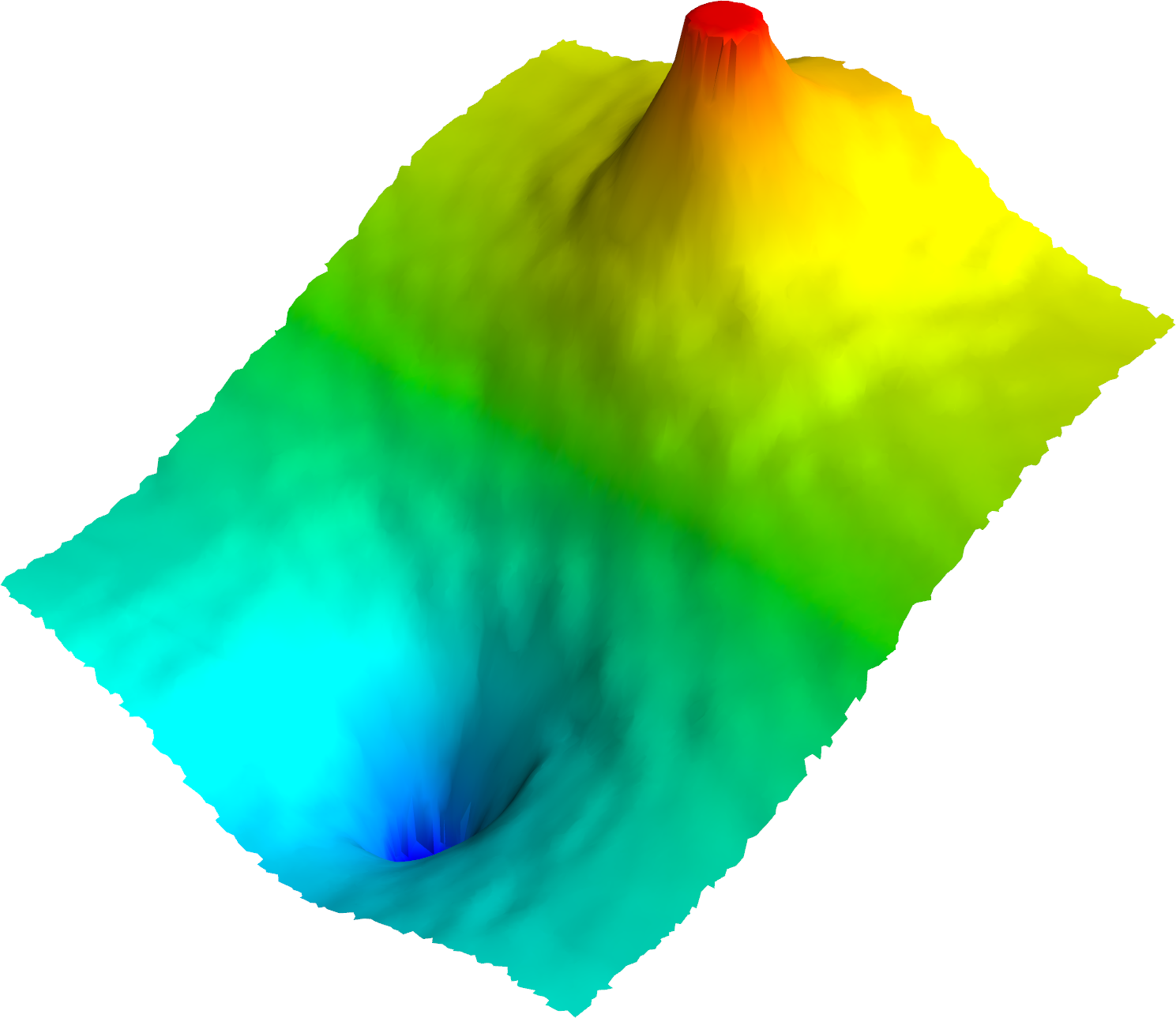}\label{fig:tp_wnll}}
\subfloat[High-order Laplace, $m=2$ \cite{zhou2011semi}]{\includegraphics[height=0.25\textheight]{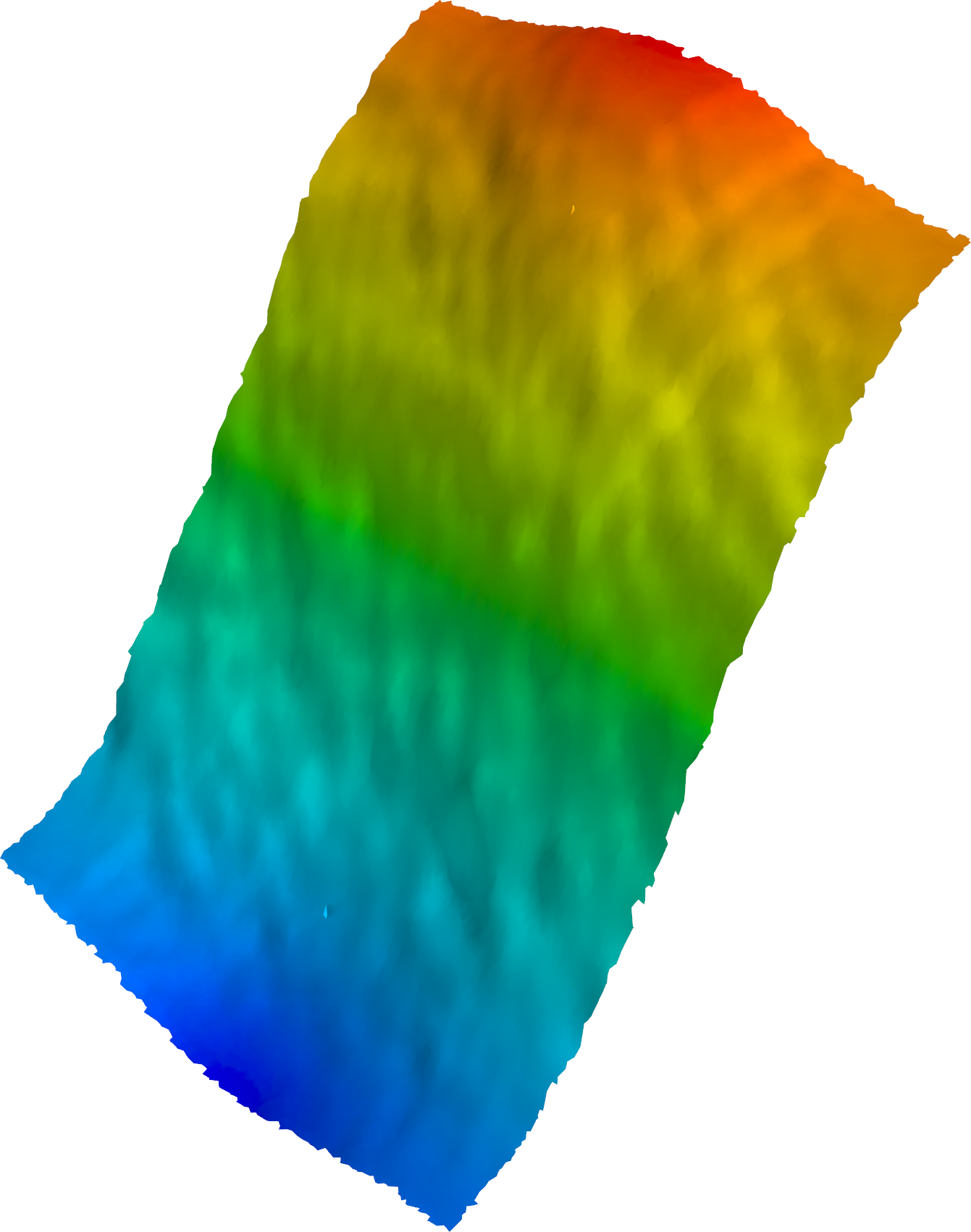}\label{fig:tp_ho}}
\subfloat[PWLL \cite{miller2023active,calder2023poisson}]{\includegraphics[height=0.25\textheight]{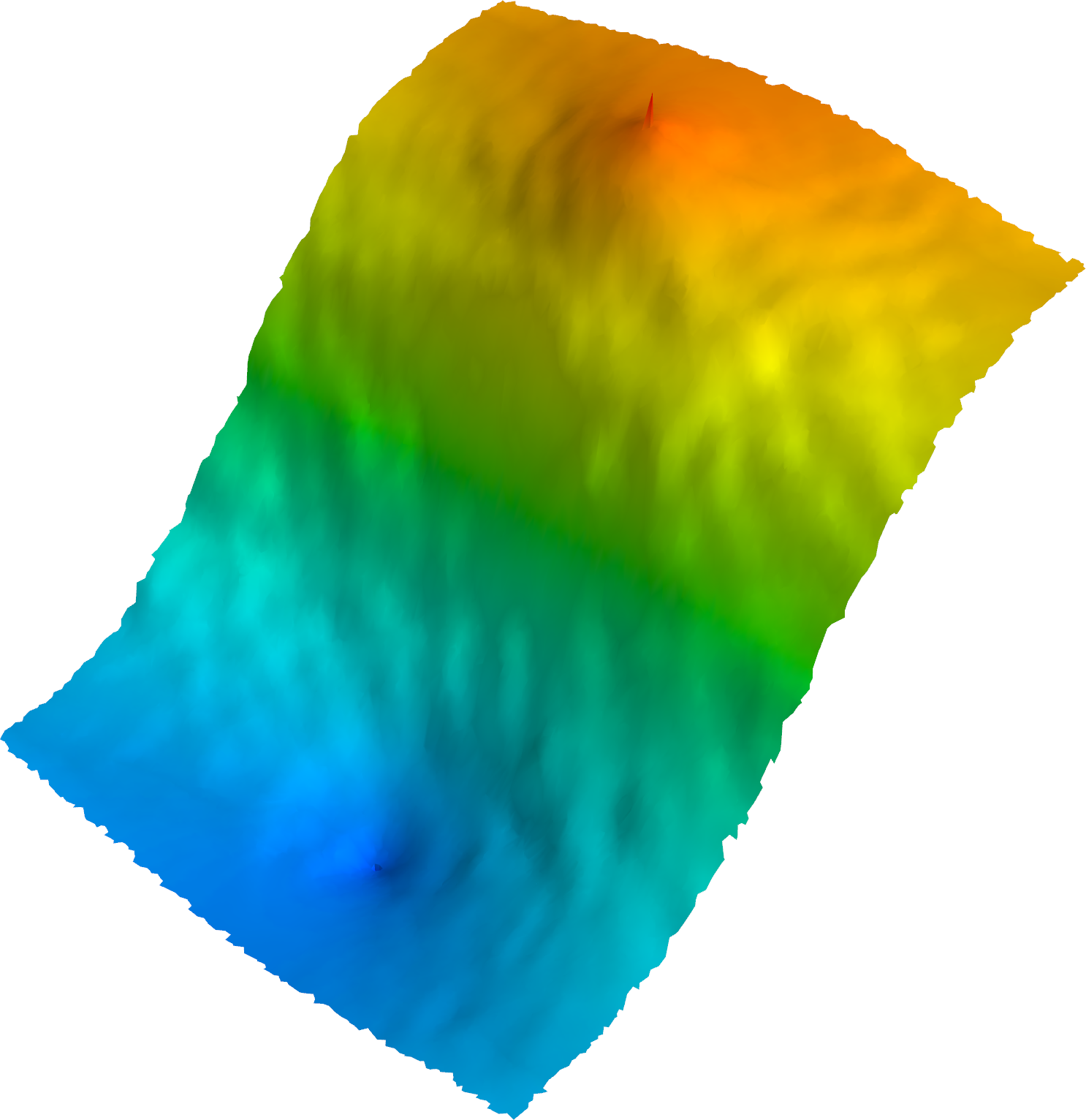}\label{fig:tp_poisson}}
\caption{Comparison of different semi-supervised learning methods on a toy example with two labeled data points with labels $+1$ and $-1$. The graph consists of $n=20,000$ uniformly distributed random variables on the unit box $[0,1]^2$ with geometric Gaussian kernel weights with $\eps=0.05$. \red Here WNLL stands for Weighted NonLocal Laplacian \cite{shi2017weighted}, and PWLL stands for Poisson Weighted Laplace Learning \cite{miller2023active,calder2023poisson}.\nc}
\label{fig:ssl_two_point}
\end{figure}

The last decade has seen a surge of interest in developing graph-based learning algorithms that are well-posed with arbitrarily small labeled data sets, by considering PDE continuum limits that are insensitive to the labeled data set. One natural idea proposed in \cite{elmoataz2015p} is to consider the $p$-Dirichlet energy on the graph, given by 
\begin{equation}\label{eq:pdirichlet_energy}
\sum_{x ,y \in \X} \wij |u(x ) - u(y )|^p.
\end{equation}
In fact, the $p$-Laplacian was suggested in many other works as well \cite{alamgir2011phase, bridle2013p,zhou2005regularization,elmoataz2015p,elmoataz2017game,elmoataz2017nonlocal,hafiene2018nonlocal,jung2016semi,ennaji2023tug}, though not in the low label rate context.  
By a similar argument as above, the continuum limit energy should be the weighted $p$-Dirichlet energy 
\[\int_\Omega \rho^2 |\nabla u|^p \, dx.\]
When $p>d$, we have the Sobolev embedding $W^{1,p}(\Omega) \subset C^{0,\alpha}(\Omega)$ \cite{EvansPDE}, which ensures that solutions are H\"older continuous, spikes cannot form, and the continuum boundary value problem
\begin{equation}\label{eq:plaplace_continuum}
\left\{
\begin{aligned}
\div(\rho^2|\nabla u|^{p-2} \nabla u) &= 0,&& \text{in }  \Omega\setminus \Gamma\\
u &= g,&& \text{on } \Gamma,
\end{aligned}
\right.
\end{equation}
is well-posed without any conditions on $\Gamma$. This well-posedness was established rigorously in the continuum limit in \cite{slepcev2019analysis} when $p >d$ and the number of labeled points is finite and fixed as the number of unlabeled data points tends to infinity. In this case, the authors of \cite{slepcev2019analysis} identified an additional restriction on the length scale $\epsilon>0$ that arises through the nonlocal nature of the graph Laplacian. Namely, in addition to $p>d$, it is necessary that $n \eps^{p} \ll 1$ to ensure that spikes do not form. Since $n\eps^d \gg \log n$ is necessary to ensure connectivity of the graph,\footnote{The quantity $n\epsilon^d$ is the average number of neighbors of any nodes on the graph, up to a constant.} this condition can only be satisfied when $p>d$. The authors of \cite{slepcev2019analysis} also proposed a modification of $p$-Laplace learning for which the condition $n\epsilon^p \ll 1$ is not required, by essentially extending the labels to all nearest neighbors on the graph. 

It is worthwhile to pause and note that the condition $n\epsilon^{p} \ll 1$ comes from a simple balancing of energy between ``spiky'' functions and smooth functions on the graph. If we write \eqref{eq:dirichlet_continuum} for the $p$-Laplacian, we see that the graph $p$-Dirichlet energy of a smooth function scales like $n^2\eps^{p+d}$. On the other hand, the energy of a constant function with spikes at the labeled nodes is
\[\sum_{x \in \Gamma}\sum_{y \in \X} \eta\left( \frac{|x -y |}{\eps}\right)|g(y ) - c|^p \approx C |\Gamma| S^p n\eps^d,\]
where $C$ is a constant depending on $\eta$, $c=u(x )$ is the constant value of $u$, $S$ is the size of the spikes, and $|\Gamma|$ is the number of labeled points, all provided the label rate is sufficiently low and the labeled points are sufficiently far apart. In order to ensure that a smooth interpolating function has less energy than a spiky one, in the setting where $|\Gamma|$ is constant with $n$ and $\eps$ we require
\[n^2 \eps^{p+d} \ll n\epsilon^d \iff n\epsilon^p \ll 1.\]
If we allow the number of labels $|\Gamma|$ to depend on $n$ and $\eps$, then the condition is 
\[n^2 \eps^{p+d} \ll |\Gamma|n\epsilon^d \iff \beta:=\frac{1}{n}|\Gamma| \gg \eps^p.\]
The quantity $\beta$ above is the \emph{label rate} --- the ratio of labeled data points to all data points --- and simply energy balancing arguments show that the label rate $\beta$ should satisfy $\beta \gg \eps^p$ to ensure convergence to a well-posed continuum problem. The case of finite labels as $n\to \infty$ corresponds to $\beta=n^{-1}$, which recovers the condition $n\epsilon^p \ll 1$. In \cite{calder2023rates} it was shown that $\beta \sim \eps^2$ for $p=2$ is the threshold for a spiky versus smooth continuum limit in a setting that allowed the labeled data set to grow as $n\to \infty$. The proofs in \cite{calder2023rates} used the random walk interpretation of the $2$-Laplacian and martingale arguments. A negative result was also established in \cite{calder2023rates} for $p>2$ showing that spikes develop when $\beta \ll \eps^p$. However, a positive result for $p>2$ --- that $\beta \gg \eps^p$ is sufficient to prevent spikes from forming --- is currently lacking, outside of the finite label regime covered in \cite{slepcev2019analysis}. 

As in Section \ref{sec:tug} we can take the limit as $p\to \infty$ of the graph $p$-Dirichlet energy. It is more illustrative to do this with the necessary conditions for minimizing the graph $p$-Dirichlet energy \eqref{eq:pdirichlet_energy}, which is the graph $p$-Laplace equation
\begin{equation}\label{eq:plaplace_graph}
\sum_{y \in \X} \wij |u(x ) - u(y )|^{p-2}(u(x ) - u(y )) = 0
\end{equation}
for each $x \in \X\setminus \Gamma$.  To take the limit as $p\to \infty$, we separate the terms in the summation above by sign, and take the $p^{\rm th}$ root to obtain 
\[\left(\sum_{y \, : \, \gij >0 }^n \wij (u(x ) - u(y ))^{p-1}\right)^{1/p} = \left(\sum_{y \, : \, \gji  >0 }^n \wij (u(y ) - u(x ))^{p-1}\right)^{1/p}.\]
where $\gij = \wij (u(x ) - u(y ))$. The terms with $\gij =0$ do not contribute and can be neglected. We assume neither sum above is empty, otherwise all terms in \eqref{eq:plaplace_graph} are zero, which is trivial.  Sending $p\to \infty$ yields the equation
\[\max_{y \, : \, \gij > 0} (u(x ) - u(y )) = \max_{y \, : \, \gji > 0} (u(y ) - u(x )).\]
The maximums above are unchanged by replacing the conditions $\gij >0$ and $\gji >0$ with $\wij  > 0$, as the graph is symmetric so $\wij =w_{y  x}$. Making this replacement, dividing by two on both sides, and rearranging yields
\begin{equation}\label{eq:graph_infty_laplacian}
\L_\infty u(x ) := u(x ) - \frac{1}{2}\left( \max_{N_x } u +  \min_{N_x } u \right) = 0,
\end{equation}
where $\L_\infty$ is called the \emph{graph $\infty$-Laplacian}, and we recall that $N_x $ are the graph neigbhors of $x $ defined in \eqref{eq:neigh}. Notice the maximum and minimum above are over graph neighbors, which satisfy $\wij  > 0$, but that the weights in the graph do not enter directly into the operator otherwise. If we were to replace $\wij $ by $\wij ^p$ in \eqref{eq:plaplace_graph}, then the weights would appear in the graph $\infty$-Laplacian, which is more informative of the graph structure, and was the approach taken in \cite{calder2018game}. We use the unweighted $\infty$-Laplacian here since it connects more directly to the tug-of-war interpretation of the graph $p$-Laplacian, discussed below in this section. It turns out, similar to the continuum $\infty$-Laplace equation, that graph $\infty$-harmonic functions also solve $L^\infty$-type variational problems on graphs (see, for example, \cite{kyng2015algorithms,luxburg2004distance}).

The graph $\infty$-Laplacian was first used for semi-supervised learning in \cite{luxburg2004distance,kyng2015algorithms}, where it is called \emph{Lipschitz learning}. In \cite{calder2019consistency} it was shown rigorously using PDE-continuum limits that solutions of the graph $\infty$-Laplace equation converged to $\infty$-harmonic functions in the continuum, even for finite isolated labeled data points. This established well-posedness of Lipschitz learning with arbitrarily few labeled examples. Subsequent work \cite{bungert2023ratio,bungert2022uniform} established convergence rates of graph $\infty$-harmonic functions to the continuum. As in Section \ref{sec:tug}, specifically \eqref{eq:homo_p}, it is natural to define a $p$-Laplacian operator on the graph that is a combination of the $2$-Laplacian and $\infty$-Laplacian. Here we use the definition
\begin{equation}\label{eq:game_theoretic}
\Lp = \alpha \Lrw + (1-\alpha) \L_\infty,
\end{equation}
where $\alpha = 1/(p-1)$ and $\alpha=1$ when $p=\infty$. This type of graph $p$-Laplacian is often called the \emph{game-theoretic} $p$-Laplacian due to its connection to tug-of-war games, as described in Section \ref{sec:tug}. The $p$-Laplace operator appearing earlier in \eqref{eq:plaplace_graph}, as the necessary conditions for the $p$-Dirichlet energy, is usually called the \emph{variational} $p$-Laplacian, and is a different operator on the graph, even though they agree in the continuum. The game-theoretic $p$-Laplacian on graphs originally appeared in \cite{manfredi2015nonlinear}, and was proposed for semi-supervised learning in \cite{calder2018game}, although the latter work considered a weighted version of the $\infty$-Laplacian. In \cite{calder2018game} it was shown that the game-theoretic $p$-Laplacian is well-posed with arbitrarily few labeled examples when $p>d$\footnote{The definition of $\alpha$ differs by a constant in \cite{calder2018game} so that $p$ has the correct continuum interpretation.}, by showing that the continuum limit was the well-posed continuum $p$-Laplace equation with $p>d$. An interesting and important detail is that the condition $n\epsilon^{p}\ll 1$ is not required by the game-theoretic $p$-Laplacian, due to the strong regularization provided by the $\infty$-Laplacian. 

As in Section \ref{sec:tug}, we can develop a tug-of-war game interpretation for the game-theoretic graph $p$-Laplacian. Any function $u$ on the graph satisfying $\Lp u = 0$ also satisfies, by rearranging \eqref{eq:game_theoretic}, the equation
\begin{equation}\label{eq:graph_tug}
u(x ) = \frac{\alpha}{d_{x }}\sum_{y \in \X} \wij u(y ) + \frac{1-\alpha}{2}\left( \max_{N_x } u+ \min_{N_x } u\right),
\end{equation}
which is the discrete graph version of the continuous identity \eqref{eq:mvp_pharm}, without the $O(\epsilon)$ error term. As we did in Section \ref{sec:tug}, we can define a discrete stochastic process on the graph that produces the martingale property. We define $X_1,X_2,\dots$ so that, given $X_k$, the choice of $X_{k+1}$ is with probability $\alpha$ a random walk step from $X_k$, with probability $\frac{1-\alpha}{2}$ the vertex $y $ maximizing $u(y )$ over neighbors $y $ of $X_k$, and with probability $\frac{1-\alpha}{2}$ the corresponding minimizing vertex. For the same reasons as in Section \ref{sec:tug}, any graph $p$-harmonic function, satisfying $\Lp u(x )=0$ for all $i$, is a martingale when applied to the process $X_k$, in the sense that
\[\E[u(X_{k+1}) \, | \, X_k] = u(X_k).\]
This allows us to apply martingale techniques to study properties of the solution to the game-theoretic $p$-Laplacian on the graph. The variational graph $p$-Laplacian \eqref{eq:plaplace_graph} does not have any such stochastic tug-of-war interpretation.  To our knowledge, no existing works have used the tug-of-war interpretation of the graph $p$-Laplacian to study properties of semi-supervised $p$-Laplacian learning.

Figures \ref{fig:tp_plaplace1} and \ref{fig:tp_plaplace2} show the results of the game-theoretic $p$-Laplacian applied to the toy two labeled point problem, illustrating how the $p$-Laplacian corrects the spike phenomenon in Laplace learning. Note we only need $p>2$ here since $d=2$. Many other approaches have been proposed for correcting the degeneracy in Laplace learning. In \cite{shi2017weighted}, the authors proposed to reweight the edges of the graph that connect to labels more strongly to discourage spike formation, and to use the ordinary graph Laplacian on the reweighted graph. The method is called the Weighted Nonlocal Laplacian (WNLL). Figure \ref{fig:tp_wnll} shows an example of the WNLL on the two-point problem, where we can see that the method essentially just extends the labels to nearest neighbors on the graph. However, it was shown in \cite{calder2020properly} that the WNLL method remains ill-posed at arbitrarily low label rates. In \cite{calder2020properly}, the authors proposed the \emph{properly weighted} graph Laplacian, which reweights the graph in a non-local way that is \emph{singular} near the labeled data, so that the continuum limit is well-posed with arbitrarily few labels. In \cite{zhou2011semi}, it was proposed to use a higher order Laplace equation of the form $\L^m u = 0$ for problems with very few labels. The idea here is that in the continuum, the variational formulation of the equation would involve $u\in H^{m}(\Omega)$, and when $m>\frac{d}{2} $, the Sobolev embedding  $H^m \subset C^{0,\alpha}(\Omega)$ ensures that spikes do not form and the continuum limit is well-posed. Figure \ref{fig:tp_ho} shows the two-point problem with higher order Laplacian regularization. The well-posedness of higher order Laplace learning with finite labeled data in the continuum limit was only very recently addressed in \cite{weihs2023consistency}. Finally, there is recent work on using Poisson equations on graphs for semi-supervised learning in \cite{calder2020poisson,calder2023poisson,miller2023active}, some of which connects to the earlier work on the properly weighted Laplacian \cite{calder2020properly}. In particular, the Poisson Weighted Laplace Learning (PWLL) method from \cite{miller2023active,calder2023poisson} involves reweighting the graph using the solution of a graph Poisson equation, whose singularities are sufficient to invoke the results in \cite{calder2020properly}, though the theory for these methods is currently still under development. Figure \ref{fig:tp_poisson} shows the behavior of PWLL on the two point problem. The volume constrained MBO method based on auction dynamics developed in  \cite{jacobs2018auction} has also proven to be effective at low label rates, though we are not aware of any theory to explain this. Other methods at low label rates include Hamilton-Jacobi equations on graphs \cite{calder2022hamilton} and the \emph{centered kernel method} \cite{mai2018random,mai2021consistent,maicouillet2018random}. \red We also mention here the work on MBO methods\footnote{MBO stands for Merriman-Bence-Osher, who originally proposed threshold dynamics approaches for numerically approximating mean curvature motion \cite{merriman1994motion}.} for graph-based semi-supervised learning \cite{merkurjev2013mbo,hu2013method,garcia2014multiclass,boyd2018simplified,merkurjev2018semi,merkurjev2014diffuse} which consider approximations of the $p=1$ Laplacian for graph-based learning. The MBO methods perform semi-supervised learning on graphs by alternating diffusion and thresholding to label vectors until convergence, which has the effect of finding the partition of the graph with the smallest perimeter that fits the labels correctly.  \nc

\subsection{Consistency versus well-posedness}

\begin{figure}[!t]
\centering
\subfloat[Data]{\includegraphics[width=0.32\textwidth]{figs/fully_preDF.pdf}}
\subfloat[Laplace \cite{zhu2003semi} (77\perc)]{\includegraphics[width=0.32\textwidth]{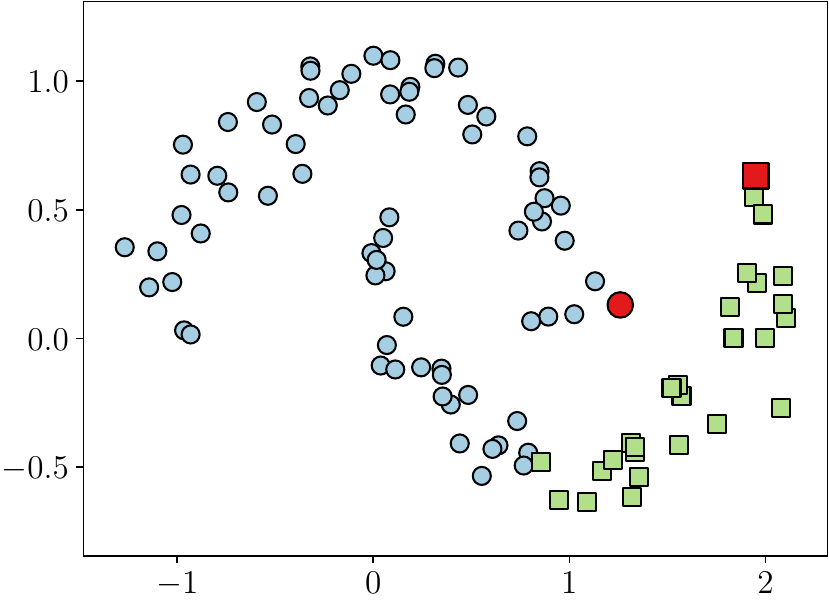}}
\subfloat[$p$-Laplace \cite{flores2022algorithms} (79\perc)]{\includegraphics[width=0.32\textwidth]{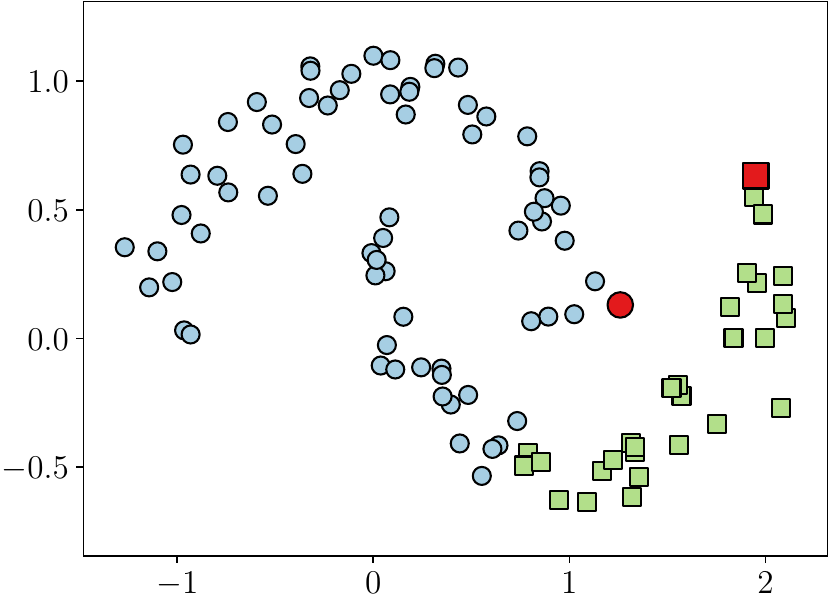}}\\
\subfloat[WNLL \cite{shi2017weighted} (61\perc)]{\includegraphics[width=0.32\textwidth]{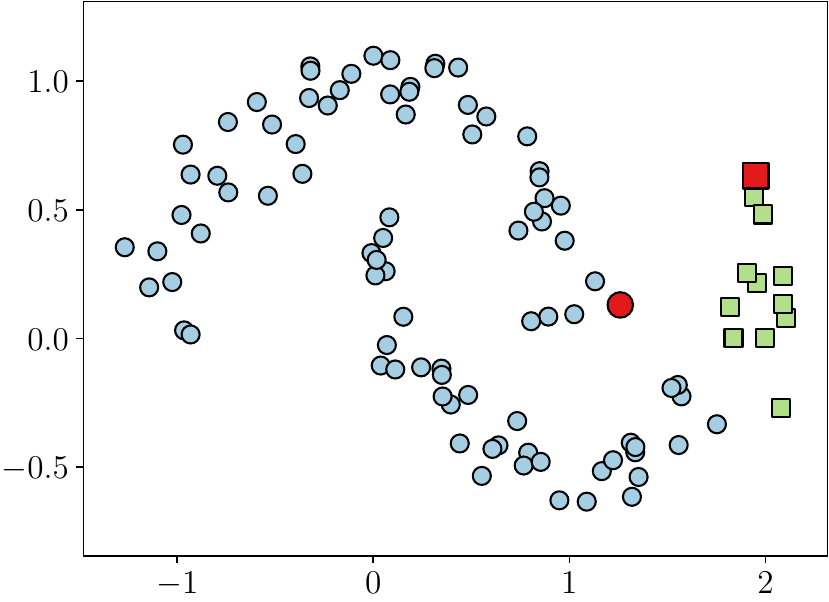}}
\subfloat[High-order Laplace \cite{zhou2011semi} (94\perc)]{\includegraphics[width=0.32\textwidth]{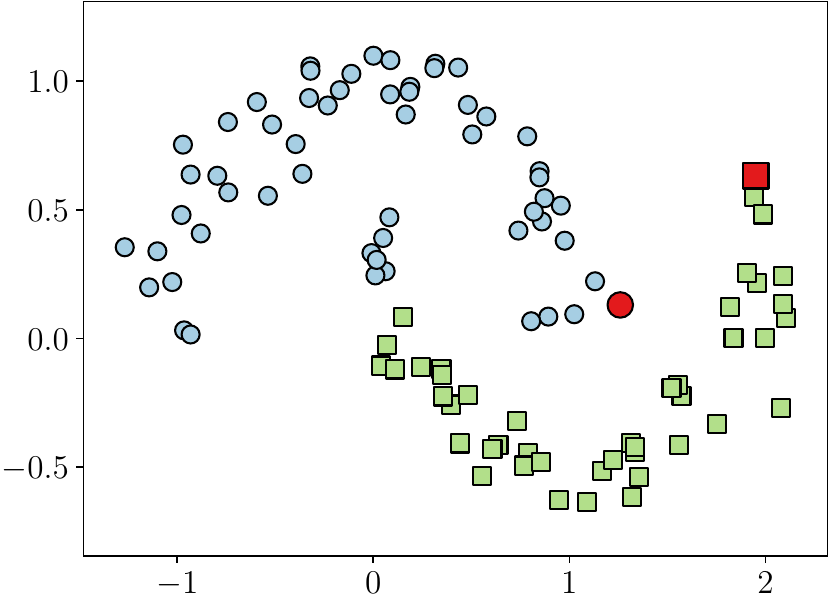}}
\subfloat[Poisson \cite{calder2020poisson} (100\perc)]{\includegraphics[width=0.32\textwidth]{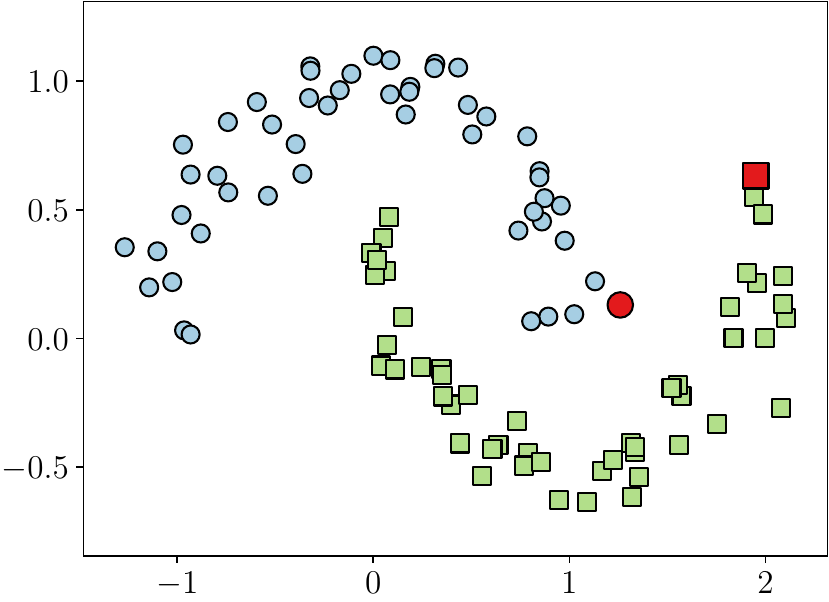}}
\caption{Classification results using different graph-based semi-supervised learning algorithms on the two-moons data set. \red There are two classes, the upper moon and the lower moon, and the label function is binary $g(x)\in \{0,1\}$. \nc The red points are the only labeled data points. The $p$-Laplace method used $p=3$, while high order Laplace learning used order $m=4$, both of which gave the best results over a reasonable search.}
\label{fig:ssl_comp}
\end{figure}

Much of the work described above is aimed at establishing continuum PDE or variational descriptions of graph-based machine learning algorithms, which can, for example, prove that a method like $p$-Laplace learning provides a well-posed method for propagating labels on graphs that does not develop spikes or other degeneracies. Many other works have also considered the continuum limit of the graph Laplacian, its spectrum, and regularity properties \cite{calder2022improved, dunlop2020large,garcia2020error,hein2007graph,hein2005graphs,calder2022Lip}. However, this abundance of work fails to address the problem of whether the machine learning methods work well or not; that is, they say little or nothing about whether the predicted label is correct! This is the question of \emph{consistency}, which is more important and difficult, and thus less often studied. 

As an example, in Figure \ref{fig:ssl_comp} we show the results of graph-based semi-supervised learning applied to the same two-moons example as in Figure \ref{fig:ssl_vs_fully}. The $p$-Laplace and higher order Laplace methods are the only methods with rigorous well-posedness results with very few labels, and Poisson learning is expected to enjoy such results, which is the subject of a forthcoming paper by the first author. Laplace learning and WNLL are ill-posed in the low-label regime. The best performing method is Poisson learning \cite{calder2020poisson}, followed closely by high-order Laplace, which illustrates that well-posedness does not always go hand in hand with good consistency properties, or at least that the relation between consistency and having a well-posedness continuum limit is not clear or well-understood. 

To address the consistency problem, one has to make an assumption on the underlying cluster structure of the graph, or the labeling function. Then the goal is to prove that the machine learning algorithm can identify the clusters or correct labels, under reasonable assumptions on the model and parameters. This kind of consistency analysis was carried out in the context of spectral clustering in \cite{hoffmann2022spectral}, in which the probability density $\rho$ was assumed to be highly concentrated on disjoint clusters. Spectral clustering uses the eigenvectors of the graph Laplacian for clustering \cite{von_luxburg_tutorial_2007} and are closely related to spectral methods in data science \cite{donoho2003hessian,belkin2003laplacian,coifman2006diffusion}. Another related work is \cite{trillos2021geometric}, which assumes a well-separated mixture model for the data and shows that the spectral embedding reflects the clusters in the data \cite{trillos2021geometric}. Aside from these examples, there is a lack of consistency results for many graph-based semi-supervised learning methods, which should be viewed as an opportunity to utilize powerful PDE-continuum limit tools to establish important and interesting results about machine learning algorithms. We begin such a study in this paper.

\red

\subsection{Deep semi-supervised learning}

There is a growing body of work on deep semi-supervised learning, which includes many methods based on graph neural networks; see \cite{yang2022survey} for a survey. The data science applications are somewhat different here; in addition to a graph, most problem formulations assume that each \emph{node} $x\in \X$ in the graph has an associated feature vector $v(x)\in \R^k$. An example application would be a citation network, where each node in the graph is an academic paper, and the edges represent citations between papers. The feature vector $v(x)$ for paper $x$ may contain summary statistics of key words appearing in the abstract of the paper. An example task is to classify the subject of each paper, using both the graph structure and the feature vectors, which in general provide complimentary information.\footnote{A standard test data set called \emph{PubMed} \cite{yang2016revisiting} uses exactly this setup.}

Let us briefly describe how some graph neural networks work for semi-supervised learning. A standard feed-forward neural network, or multi-layer perceptron, iteratively composes linear functions with nonlinear activation function, $u_{k+1} = \sigma(\Theta_k u_k + b_k)$, where $u_k\in \R^{n_k}$ is the input to the $k^{\rm th}$ layer, $\Theta_k\in \R^{n_{k+1}\times n_k}$ is the weight matrix for the $k^{\rm th}$ layer, and $b_k\in \R^{n_{k+1}}$ the bias vector. A common choice for the activation $\sigma$ is the rectified linear unit (ReLU) $\sigma(t)=\max\{t,0\}$, but other choices are possible. The input to the network is $u_1$, and the output is $u_L$ after $L$ layers. 

Graph neural networks incorporate the graph information into the neural network architecture.  A very popular approach is the graph convolutional neural network (GCN) proposed in \cite{kipf2016semi}, which combines the standard feed forward neural network with the graph structure as follows:
\begin{equation}\label{eq:gcn_layer}
u_{k+1}(x) = \sigma\left( \frac{1}{d_x}\sum_{y\in \X}w_{xy}(\Theta_k u_k(y) + b_k) \right),
\end{equation}
where $u_k:\X\to \R^{n_k}$ and $\Theta_k,b_k$ are as before.\footnote{To be precise, the method in \cite{kipf2016semi} divides by $\sqrt{d_xd_y}$ in the sum, instead of $d_x$, and adds self-loops to the graph.} The inputs are the node feature vectors $u_1(x)=v(x)$, and the output after $L$ layers is, after thresholding, the network's prediction of the label for each node in the graph. This output is fed into a loss function that measures how well the network fits some training data, and the weights $\Theta_k$ and biases $b_k$ are tuned by minimizing the loss function with gradient descent.

We note that the GCN layer \eqref{eq:gcn_layer} is essentially a combination of a feed-forward neural network with averaging over neighborhoods on the graph. These types of networks are also called \emph{message passing} graph neural networks. The reader can compare this to label propagation, see \eqref{eq:mvp_iterate}, and notice that when $\sigma$ is the identity $\sigma(t)=t$, $\Theta_k=I$, and $b_k=0$, this appears to be exactly label propagation. The difference is that with graph neural networks, the operation acts on the \emph{feature vectors} of the nodes, while label propagation acts directly on the labels, since feature vectors are not available.  The graph convolutional layer \eqref{eq:gcn_layer} can be viewed as a special case of a more general class of of graph neural networks based on a spectral definition of convolution  \cite{defferrard2016convolutional}. We also mention an important variant called graph attention networks \cite{velivckovic2017graph}, which allows the weights in \eqref{eq:gcn_layer} to be \emph{learned} using an \emph{attention mechanism}, in a similar way that transformers utilize the attention mechanism in large language models \cite{vaswani2017attention}.

The analysis of graph neural network models like the GCN given in \eqref{eq:gcn_layer} would be substantially different from the analysis in this paper. The important problems concern how the \emph{learned} weights $\Theta_k$ and biases $b_k$ interact with the graph structure during training, especially for deeper networks, and whether the network relies more on the graph structure or the node features for classification. Although there is ostensibly a connection to label propagation \eqref{eq:mvp_iterate}, this is largely superficial, since GCNs \eqref{eq:gcn_layer} usually only have a handful of layers, say $2$ or $3$, while label propagation is often run for hundreds of steps or more, in order for the method to \emph{converge} to the solution of Laplace's equation. Another difference with the setting of this paper is that many recent graph neural network approaches are aimed at tackling heterophily in graphs, in which adjacent nodes may very often belong to different classes \cite{yan2022two,zheng2022graph,li2022finding,zhu2020beyond}. In this case the semi-supervised smoothness assumption does not hold, and the question is how to combine information from both the feature vectors and graph structure, when neither on their own is highly informative. 

There are, nevertheless, some settings where the analysis in this paper may be relevant in deep graph neural networks. Many recent works have shown that one can obtain effective methods by decoupling the graph diffusion or averaging in \eqref{eq:gcn_layer} from the neural network weights $\Theta_k$ and biases $b_k$ \cite{sellars2022laplacenet,gasteiger2018predict,dong2021equivalence,azad2022learning,huang2020combining}. One way to do this, proposed in \cite{gasteiger2018predict} is to first run the feature vectors $v(x)$ for each node through a standard feed-forward neural network to make label predictions for each node $x$, and then to diffuse those predictions over the graph before feeding them into a loss function. In the case of \cite{gasteiger2018predict} the authors used the PageRank algorithm \cite{gleich2015pagerank}, which is essentially a graph reaction-diffusion equation \cite{yuan2022continuum}, and the label predictions entered through the source term. The particular choice of PageRank is not essential, and other works have shown that various forms of graph diffusion can easily substitute \cite{sellars2022laplacenet,azad2022learning}. Thus, improvements in graph-based semi-supervised learning algorithms, especially theoretical guarantees, can have a direct impact on deep semi-supervised learning methods that decouple the graph from the neural network.

\nc



\section{Consistency results for the \texorpdfstring{$p$}{p}-Laplacian}
\label{sec:results}

We present here a general framework for proving consistency results for graph-based semi-supervised learning with the game-theoretic $p$-Laplacian. The framework is based on the stochastic tug-of-war interpretation of the $p$-Laplacian on a graph, that was introduced in Section \ref{sec:ssl}. We illustrate how to use the framework to prove some preliminary consistency results on geometric graphs and stochastic block model graphs, and then highlight some open research questions for future work. 

As in Section \ref{sec:ssl} let $\X=\{x_1,\dots,x_n\}$ denote our data points, which are the vertices of the graph, and let $W=(w_{xy})_{x,y\in \X}$ be a symmetric (i.e., $w_{xy}=w_{yx}$) weight matrix which encodes the graph structure. We assume the graph is connected. We let $d_x=\sum_{y\in \X}w_{xy}$ be the degree of vertex $x\in \X$, and we recall that $N_x\subset \X$ denotes the graph neighbors of $x$, as defined in \eqref{eq:neigh}. Let $\ell^2(\X)$ denote the space of functions $u:\X\to \R$, and define the game-theoretic $p$-Laplacian on the graph $G$ is the operator $\Lp:\ell^2(\X)\to \ell^2(\X)$ defined in \eqref{eq:game_theoretic} in Section \ref{sec:ssl}, where $\alpha = 1/(p-1)$ and $p\geq 2$. Let us denote by $\Gamma\subset \X$ the labeled data set and $g:\Gamma\to\R$ the labels. The game-theoretic $p$-Laplacian regularized semi-supervised learning problem is given by the boundary value problem
\begin{equation}\label{eq:ssl_pLap}
\left\{\begin{aligned}
\Lp u &= 0,&&\text{in }\X\setminus \Gamma\\ 
u &=g,&&\text{on }\Gamma.
\end{aligned}\right.
\end{equation}
Since the graph is connected, the $p$-Laplace equation \eqref{eq:ssl_pLap} can be shown to have a unique solution via the Perron method \cite{calder2018game}.

In this section we address the question of \emph{consistency} of $p$-Laplacian regularization. That is, supposing that $g$ is the restriction to $\Gamma$ of a label function on the whole graph $g:\X\to \R$ that is in some sense smooth with respect to the geometry of the graph, we aim to give conditions under which the solution $u$ of \eqref{eq:ssl_pLap} is close to $g$ at the unlabeled vertices $\X\setminus \Gamma$. In other words, consistency asks the question: under what conditions on the underlying label function $g$, the graph structure $W$, and the label set $\Gamma$ does the solution of $p$-Laplacian regularization make the correct label predictions? \red In this paper, we consider the \emph{noise-free} setting, where we observe the true labels without any noise or corruption. It would be interesting to consider the setting of noisy labels in future work; we discuss this more in Section \ref{sec:conc}. \nc

In consistency of graph-based learning, it is essential that the underlying label function $g$ is in some way regular or smooth with respect to the graph structure. In other words, there should be some degree of correlation between the labels of vertices and their nearby neighbors in the graph topology. Otherwise graph-based learning is not useful and would not be expected to yield better results than fully supervised learning that ignores the graph structure. In this section, we present results for two different types of graph structures. In Section \ref{sec:e-graph} we present results for geometric graphs, where the vertices are sampled from a Euclidean domain and nearby points are connected by edges. In this case we assume that $g$ has some degree of regularity with respect to the underlying Euclidean geometry. In Section \ref{sec:stochasticblockmodels} we present results for classification on stochastic block model graphs, where there is no \red continuous geometric structure (SBM graphs can be thought of as having a discrete geometric structure, but this is not directly related to continuous PDEs). \nc In this case the label function is concentrated on the blocks, and only changes between blocks where there are relatively few edges. 

\subsection{Results on general graphs}

Before presenting our main results in Section \ref{sec:e-graph} and Section \ref{sec:stochasticblockmodels}, we present our general stochastic tug-of-war framework on general graphs in Section \ref{sec:RGG}, which amounts to selecting sub-optimal strategies in the tug-of-war game to produce tractable upper and lower bounds on the solution $u$ of the $p$-Laplace equation \eqref{eq:ssl_pLap}. It is interesting to remark that we do not explicitly use the symmetry assumption $w_{xy}=w_{yx}$ in our results, and our results hold with minor modifications to the quantities involved for directed (i.e., nonsymmetric) graphs.




\subsubsection{The tug-of-war lemma}\label{sec:RGG}

In this section we present our main tug-of-war result (Lemma \ref{lem:main_tug}), which uses the stochastic tug-of-war game and martingale techniques to bound the difference $u(x)-g(x)$, where $u$ solves \eqref{eq:ssl_pLap}. These techniques are later applied to geometric graphs in Section \ref{sec:e-graph}, and to stochastic block model (SBM) graphs in Section \ref{sec:stochasticblockmodels}.    The tug-of-war game for the $p$-Laplacian is based on the following dynamic programming principle associated with the $p$-Laplacian, which was discussed and established in Section \ref{sec:ssl}. 
\begin{proposition}[Dynamic Programming Principle]\label{prop:dpp}
If $u\in \ell^2(\X)$ and $x\in \X$ such that  $\Lp u(x)=0$ then
\begin{equation}\label{eq:dpp}
u(x) = \frac{\alpha}{d_x}\sum_{y\in \X}w_{xy}u(y) +\frac{1-\alpha}{2}\left(\min_{N_x}u+ \max_{N_x}u \right),
\end{equation}
where we recall that $\alpha = 1/(p-1)$ and $p\geq 2$.
\end{proposition}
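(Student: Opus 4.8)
The plan is to prove the identity by direct substitution of the definitions of the two constituent operators into the definition \eqref{eq:game_theoretic} of the game-theoretic $p$-Laplacian, followed by elementary rearrangement. Recall from \eqref{eq:game_theoretic} that $\Lp = \alpha \Lrw + (1-\alpha)\L_\infty$ with $\alpha = 1/(p-1)$, which lies in $[0,1]$ precisely because $p\geq 2$.

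First I would record the two pieces separately. From the definition of the random walk graph Laplacian we have
\[
\Lrw u(x) = \frac{1}{d_x}\L u(x) = u(x) - \frac{1}{d_x}\sum_{y\in\X}w_{xy}u(y),
\]
and from \eqref{eq:graph_infty_laplacian} the graph $\infty$-Laplacian is
\[
\L_\infty u(x) = u(x) - \frac{1}{2}\left(\max_{N_x}u + \min_{N_x}u\right).
\]
Both expressions are well defined: the extrema are taken over the neighbor set $N_x$, which is nonempty because the graph is assumed connected, so every vertex has at least one neighbor.

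Next I would combine the two. Substituting into \eqref{eq:game_theoretic} and grouping the $u(x)$ terms gives
\[
\Lp u(x) = \bigl(\alpha + (1-\alpha)\bigr)u(x) - \frac{\alpha}{d_x}\sum_{y\in\X}w_{xy}u(y) - \frac{1-\alpha}{2}\left(\max_{N_x}u + \min_{N_x}u\right).
\]
The key (and only) structural observation is that the coefficients of the two operators sum to one, so the coefficient of $u(x)$ collapses to $1$. Setting $\Lp u(x) = 0$ and solving for $u(x)$ then yields \eqref{eq:dpp} at once.

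There is no genuine obstacle here: the statement is simply an algebraic reformulation of the equation $\Lp u(x)=0$, and all of the content resides in the definitions already established in Section \ref{sec:ssl}. The only point meriting a remark is the well-definedness of the maximum and minimum, which is handled by the connectivity of the graph as noted above; everything else is routine bookkeeping.
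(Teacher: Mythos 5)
Your proof is correct and follows exactly the route the paper takes: the paper establishes \eqref{eq:dpp} by the same direct rearrangement of $\Lp u = \alpha\Lrw u + (1-\alpha)\L_\infty u = 0$ when deriving \eqref{eq:graph_tug} in Section \ref{sec:ssl}, which is why Proposition \ref{prop:dpp} is stated there without a separate proof. Your added remark on the well-definedness of the extrema via connectivity is a harmless bonus.
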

We call equation \eqref{eq:dpp} a dynamic programming principle (DPP), because it expresses $u$ at a point via $u$ at nearby points, and is closely related to the DPPs that appear in optimal control problems (see, e.g., \cite{bardi1997optimal}). In fact, the DPP arises from the stochastic two player tug-of-war game described in Section \ref{sec:ssl}. The game is played between two players Paul and Carol. The game ends when the token of the game arrives at a point $x\in \Gamma$, and Paul pays Carol $g(x)$. Thus, Paul wants the game to end where $g$ is smallest, while Carol wants the opposite. At each step of the game, with probability $\alpha$ the token moves from its current position $x$ to a neighbor $y\in N_x$ via a single step of a random walk on the graph (i.e., the next position is chosen randomly according to the distribution $\P(y)=w_{xy}/d_x$).  With probability $(1-\alpha)/2$ Paul chooses the next position of the token, and with probability $(1-\alpha)/2$ Carol chooses the next position. We will use this interpretation in the proofs of our main results, without explicitly writing down the game or the appropriate spaces of strategies.



In order to use the tug-of-war game, we define a stochastic process that corresponds to a \emph{sub-optimal} strategy for Paul.  We recall that $\alpha=1/(p-1)$ and pick any $p\geq2$. Let $u\in \ell^2(\X)$ and $x\in \X$, and consider the following stochastic process defined on $\X$ associated with $u$ and an initial point $x\in \X$. We define the sequence of random variables $X_0,X_1,X_2$ as follows. We first set $X_0=x$. Then, conditioned on $X_i$, we choose $X_{i+1}$ as follows: first, if $X_i\in \Gamma$, then we set $X_{i+1}=X_i$, so the process gets \emph{stuck} when it hits $\Gamma$. If $X_i\not\in \Gamma$, then we choose $X_{i+1}$ by the following:
\begin{enumerate}
\item With probability $\alpha$ we take an independent random walk step, that is 
\[\P(X_{i+1} = y \, | \, X_i = x) = \frac{w_{xy}}{d_X}.\]
\item With probability $\tfrac{1}{2}(1-\alpha)$ we choose $\displaystyle X_{i+1}\in \argmax_{y\in N_{X_i}}u$.
\red \item With probability $\tfrac{1}{2}(1-\alpha)$ we choose $\displaystyle X_{i+1}\in \argmin_{y\in N_{X_i}\cap \Gamma} g$, when $N_{x_i}\cap \Gamma \neq \varnothing$, and $\displaystyle X_{i+1}\in \argmin_{y\in N_{X_i}} u$, when $N_{x_i}\cap \Gamma=\varnothing$.
\end{enumerate}
All the probabilistic ingredients are chosen independently. When there are multiple choices for $X_{i+1}$ in steps 2 and 3, the particular choice is irrelevant to the analysis, and any concrete choice will do (e.g., say, choose the vertex $x_i$ with the smallest index). The stochastic process $X_0,X_1,X_2,\dots,$ defined above can be interpreted as a realization of the two-player game where Carol plays optimally and our choice of strategy for Paul is to move to $\Gamma$ as soon as possible, \red which is the \emph{sub-optimal} part of his strategy.  \nc



We now establish the sub-martingale property corresponding to our stochastic process, \red which is a key ingredient in our analysis. 
\begin{lemma}\label{lem:submart}
Let $u$ be the solution to \eqref{eq:ssl_pLap}. Then the random variables $Z_i=u(X_i)$ form a sub-martingale with respect to $X_i$, that is $\E[Z_{i+1}\, |\, X_i] \geq Z_i$. 
\end{lemma}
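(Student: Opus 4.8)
The plan is to compute the one-step conditional expectation $\E[Z_{i+1}\mid X_i]$ directly from the definition of the process and compare it term-by-term against the dynamic programming principle of Proposition \ref{prop:dpp}. First I would dispose of the trivial case: if $X_i\in\Gamma$ the process is frozen, $X_{i+1}=X_i$, so $Z_{i+1}=Z_i$ and the sub-martingale inequality holds with equality. Hence it suffices to treat a point $x=X_i\notin\Gamma$, where $u$ solves $\Lp u(x)=0$ and Proposition \ref{prop:dpp} applies.

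For $x\notin\Gamma$, conditioning on which of the three branches (random-walk step / $\argmax$ / $\argmin$ over $\Gamma$) is selected gives
\begin{equation*}
\E[Z_{i+1}\mid X_i=x]=\frac{\alpha}{d_x}\sum_{y\in\X}w_{xy}u(y)+\frac{1-\alpha}{2}\max_{N_x}u+\frac{1-\alpha}{2}\,u(y^\ast),
\end{equation*}
where $y^\ast\in\argmin_{y\in N_x\cap\Gamma}g$; note that \as1 guarantees $N_x\cap\Gamma\neq\varnothing$, so $y^\ast$ exists. Since $y^\ast\in\Gamma$ we have $u(y^\ast)=g(y^\ast)=\min_{N_x\cap\Gamma}u$. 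Subtracting the DPP identity \eqref{eq:dpp} from the display above, the random-walk term and the $\max_{N_x}u$ term cancel exactly, leaving
\begin{equation*}
\E[Z_{i+1}\mid X_i=x]-u(x)=\frac{1-\alpha}{2}\left(\min_{N_x\cap\Gamma}u-\min_{N_x}u\right).
\end{equation*}

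The key step — and the only place any inequality enters — is to observe that minimizing over the subset $N_x\cap\Gamma\subseteq N_x$ can only increase the minimum, so $\min_{N_x\cap\Gamma}u\geq\min_{N_x}u$; combined with $1-\alpha\geq0$ (which holds precisely because $\alpha=1/(p-1)\in(0,1]$ for $p\geq2$) this forces the right-hand side to be nonnegative, i.e.\ $\E[Z_{i+1}\mid X_i]\geq Z_i$. Conceptually this is exactly the statement that Paul's prescribed \emph{sub-optimal} strategy — rushing into the boundary set $\Gamma$ rather than playing the true $\argmin$ of $u$ over all of $N_x$ — can only favor Carol, which produces a lower bound on $u$ and hence a sub-martingale. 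I do not expect a genuine obstacle here: the content of the lemma is short, and once the conditional expectation is written out the result follows immediately from the DPP and set-monotonicity of the minimum. The only point needing care is the bookkeeping that the third branch uses $\argmin$ over $N_x\cap\Gamma$ rather than over all of $N_x$, together with the fact that $u=g$ on $\Gamma$, which is what legitimizes the substitution $u(y^\ast)=\min_{N_x\cap\Gamma}u$ and fixes the correct (one-sided) direction of the inequality.
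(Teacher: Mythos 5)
Your proof is correct and follows essentially the same route as the paper's: condition on the three branches to write out $\E[u(X_{i+1})\mid X_i]$, use $u=g$ on $\Gamma$ to identify the third term with $\min_{N_x\cap\Gamma}u$, and conclude from $\min_{N_x\cap\Gamma}u\geq\min_{N_x}u$ together with the DPP. The only cosmetic difference is that you subtract the DPP identity and isolate the deficit term explicitly, whereas the paper bounds the whole expression at once; the mathematical content is identical.
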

\begin{proof}
Conditioned on $X_i\in \Gamma$ we have $\E[u(X_{i+1}) \, | \, \F_i]=u(X_i)$, since $X_{i+1}=X_i$.
Conditioned on $X_i\not\in \Gamma$ we further condition on steps 1--3 in the definition of $X_{i+1}$ and use $u=g$ on $\Gamma$ to obtain 
\begin{equation}\label{eq:step}
\E[u(X_{i+1}) \, |\, \F_i]=\frac{\alpha}{d_{X_i}}\sum_{y\in \X}w_{X_i,y}u(y) + \frac{1-\alpha}{2}\left(\max_{N_{X_i}}u + \min_{V}u(y)\right),
\end{equation}
where $V=N_{x_i}$ when $N_{X_i}\cap \Gamma=\varnothing$ and $V = N_{X_i}\cap \Gamma$ when $N_{X_i}\cap \Gamma\neq \varnothing$.  Using Proposition \ref{prop:dpp}  we have
\[\E[u(X_{i+1}) \, |\, \F_i]\geq\frac{\alpha}{d_{X_i}}\sum_{y\in \X}w_{X_i,y}u(y) + \frac{1-\alpha}{2}\left(\max_{N_{X_i}}u + \min_{N_{X_i}}u\right) = u(X_i),\]
which completes the proof.
\end{proof}
\nc

\red We now give the main tug-of-war lemma, which uses the sub-martingale property to bound $u(x)$.
\begin{lemma}\label{lem:main_tug}
Let $u$ be the solution to \eqref{eq:ssl_pLap}. Define the stopping time
\begin{equation}\label{eq:stopping}
\tau = \inf\{i\geq 0\, : \, X_i\in \Gamma\}.
\end{equation}
Then for any $x\in \X$ it holds that
\begin{equation}\label{eq:stopping_bound}
u(x)  \leq  \E[g(X_\tau) \, | \, X_0=x]. 
\end{equation}
\end{lemma}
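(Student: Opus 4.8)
The plan is to apply the optional stopping theorem to the bounded sub-martingale $Z_i=u(X_i)$ furnished by Lemma \ref{lem:submart}. First I would record the two structural facts that make optional stopping legitimate. Since $\X$ is finite, $u\in\ell^2(\X)$ is bounded, so $Z_i=u(X_i)$ is uniformly bounded and hence uniformly integrable, which disposes of any integrability concern. Second, I would argue that the stopping time $\tau$ in \eqref{eq:stopping} is almost surely finite. By \as1, whenever $X_i\notin\Gamma$ the set $N_{X_i}\cap\Gamma$ is nonempty, so step 3 in the definition of the process moves the token into $\Gamma$ with probability $\tfrac12(1-\alpha)$, after which it is absorbed. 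When $p>2$ we have $\alpha<1$, so this absorption probability is bounded below by a fixed positive constant at every step, giving $\P(\tau>k)\le\left(1-\tfrac12(1-\alpha)\right)^k$ and hence $\E[\tau]<\infty$. The borderline case $p=2$ (where $\alpha=1$ and the process is a pure random walk) I would handle separately: on the finite connected graph the walk reaches the nonempty set $\Gamma$ in finite expected time, so again $\tau<\infty$ almost surely.

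With these facts in hand, the optional stopping theorem for sub-martingales yields
\[\E[Z_\tau \mid X_0=x] \geq Z_0 = u(x).\]
By the definition of $\tau$ we have $X_\tau\in\Gamma$, and since $u$ solves \eqref{eq:ssl_pLap} it agrees with $g$ on $\Gamma$, so $Z_\tau=u(X_\tau)=g(X_\tau)$. Substituting and rearranging gives $u(x)\leq\E[g(X_\tau)\mid X_0=x]$, and subtracting $g(x)$ from both sides produces exactly \eqref{eq:stopping_bound}.

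The only genuinely delicate point is the almost sure finiteness of $\tau$, which is where \as1 is essential: without a labeled neighbor at every vertex, Paul's sub-optimal strategy of jumping straight into $\Gamma$ (step 3) would be unavailable and the absorption-probability lower bound would fail. Everything else — the uniform boundedness of $Z_i$ and the identity $Z_\tau=g(X_\tau)$ — is immediate from the finiteness of the graph and the boundary condition. I would therefore devote most of the write-up to carefully establishing the geometric tail bound for $\tau$ (treating $p=2$ as a short separate case) and then simply invoke the standard optional stopping theorem for sub-martingales \cite{williams1991probability}.
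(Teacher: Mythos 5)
Your proof is correct and follows essentially the same route as the paper: both apply Doob's optional stopping theorem for sub-martingales to $Z_i=u(X_i)$ (furnished by Lemma \ref{lem:submart}) and then use $X_\tau\in\Gamma$ together with $u=g$ on $\Gamma$. The only difference is that the paper invokes optional stopping without spelling out the uniform boundedness of $Z_i$ or the almost-sure finiteness of $\tau$ under \as1, which you verify explicitly; that added care is welcome but does not change the argument.
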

\begin{proof}
By Lemma \ref{lem:submart}, $Z_i=u(X_i)$ is a sub-martingale, and so by Doob's Optional Stopping Theorem for sub-martingales, see e.g. \cite{williams1991probability}, we have
\begin{equation}\label{Doob}
u(x) =\E[Z_0 \, | \, X_0=x] \leq \E[Z_\tau\, | \, X_0=x] = \E[u(X_\tau)\, | \, X_0=x] =\E[g(X_\tau)\, | \, X_0=x],
\end{equation}
since $X_\tau \in \Gamma$. 
\end{proof}

Let us give a brief preview of how Lemma \ref{lem:main_tug} is used. We can subtract $g(x)$ from both sides of \eqref{eq:stopping_bound} to obtain
\[ |u(x)-g(x)|  \leq  \E[|g(X_\tau) - g(X_0)| \, | \, X_0=x]. \]
If $g$ is Lipschitz continuous, which is a reasonable assumption on a \emph{continuous geometric} graph, then the right hand side is $O(|X_\tau - X_0|)$, and so bounding $|u(x)-g(x)|$ amounts to estimating the stopping time $\tau$. In this context, it is not especially important that Paul chooses the point in $N_{x_i}\cap \Gamma$ that minimizes $g$, and the game could be modified so Paul picks, say, a random boundary point. However, for SBM graphs, the arguments are different, and the key is to allow the \emph{noise} to exit the game, since the noise sees the block structure. In the SBM setting, it is important for Paul to choose the minimizer of $g$ over $N_{x_i}\cap \Gamma$ since $g$ cannot be Lipschitz in any sense on an SBM graph. The interested reader can skip to Section \ref{sec:stochasticblockmodels} for the SBM analysis, which is independent of much of the geometric results below. 

Throughout the rest of the paper, we make the strong, but simplifying, assumption that every vertex $x\in \X$ has a neighbor $y\in N_x$ with $y\in \Gamma$, meaning: 
\begin{itemize}
\item[\as1] We assume that for every $x\in \X$
\begin{equation*}
\Gamma\cap N_x \neq \varnothing.
\end{equation*}
\end{itemize}
Assumption \as1 is a simplifying assumption that makes much of the analysis tractable. Essentially, it allows us to estimate the stopping time $\tau$ defined in \eqref{eq:stopping} in many different settings by allowing Paul to end the game quickly. Depending on the graph model, assumption \as1 is a fairly strong assumption on both the \emph{label rate} and the \emph{graph structure}. In Sections \ref{sec:e-graph} and \ref{sec:stochasticblockmodels} we will give conditions under which \as1 holds for geometric and SBM graphs. The results in this paper are preliminary, and we discuss the importance of relaxing assumption \as1  in Section \ref{sec:conc}. This seems to us to be a nontrivial task that will require fine estimates on martingale stopping times.  \nc

\subsubsection{General consistency results}\label{main_r}

Here, we use the martingale lemma, Lemma \ref{lem:main_tug}, to establish some general consistency results, \red which will rely on a Lipschitz or bounded derivative condition on the label function $g$. Thus, for $g:\X\to \R$, we need to construct a notion of gradient that is consistent with the gradient in the continuum when working with geometric graphs with a length scale $\eps$. Given a graph described by a weight matrix $W$, we let $\diam(W)$ denote the \emph{diameter} of the graph, which is defined as the largest number of hops required to get from any node to any other node in the graph. When the graph is a random geometric graph with bandwidth $\eps>0$, then each hop travels at most distance $\eps$, and so we would expect to have $\diam(W)= O(\eps^{-1})$. This motivates the definition of the \emph{graph length scale}.
\begin{definition}\label{def:graph_length_scale}
Given a weighted graph $W$, the \emph{graph length scale} $\eps_W$ is defined as 
\[\eps_W = \frac{1}{\diam(W)}.\]
\end{definition}

We can now define the gradient of $g$ as 
\[\nabla g(x,y) = \frac{g(x)-g(y)}{\eps_W},\]
for any $x,y\in \X$ connected by an edge, so $w_{xy}>0$.  We also define
\[\|g\|_\infty = \max_{x\in \X}|g(x)|,\]
and
$$\|\nabla g\|_\infty =\max_{x,y \in X}\left\{\left| \nabla g(x,y)\right|\mathbbm{1}_{w_{xy}>0} \right\}.$$
This is to say, the norm $\|\nabla g\|_\infty$ is the maximal absolute difference of $g$ across all edges in the graph divided by the graph length scale $\eps_W$.

\nc 

Define the smallest \emph{weighted} ratio of labeled neighbours to neighbours to be
\begin{equation}\label{delta1}
\delta:=\min_{x\in X}\frac{\Sigma_{y\in \X}w_{xy}\mathbbm{1}_{y\in \Gamma}}{\Sigma_{z\in \X}w_{xz}}.
\end{equation}
Note that if \as1 holds, then $\delta>0$. 
 
The following theorem gives our first consistency result on general graphs. 
\begin{theorem}\label{th1}
Assume \as1 holds. 
Then, for any $x\in\X,$ 
\red
\begin{equation}\label{u_minus_g}
|u(x) - g(x)|\leq \left(\frac{2\log(\|g\|_\infty\|\nabla g\|_\infty^{-1}\eps_W^{-1})}{1-\alpha +2\alpha \delta}+2\right)\|\nabla g \|_\infty\eps_W.
\end{equation}
\nc
\end{theorem}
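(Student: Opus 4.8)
The plan is to prove both one-sided inequalities $u(x)-g(x)\le R$ and $g(x)-u(x)\le R$, where $R$ denotes the right-hand side of \eqref{u_minus_g}, and then combine them into the absolute-value bound. The first inequality follows directly from the tug-of-war estimate of Lemma \ref{lem:main_tug}, which reduces the task to bounding $\E[g(X_\tau)-g(x)\mid X_0=x]$. For the reverse inequality I would exploit that the game-theoretic $p$-Laplacian is odd: since $\Lrw$ is linear and $\L_\infty(-v)=-\L_\infty v$ (because $\max_{N_x}(-v)+\min_{N_x}(-v)=-(\max_{N_x}v+\min_{N_x}v)$), we have $\Lp(-u)=-\Lp u$, so $-u$ solves \eqref{eq:ssl_pLap} with labels $-g$. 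As $\|-g\|_\infty=\|g\|_\infty$, $\|\nabla(-g)\|_\infty=\|\nabla g\|_\infty$, and $\delta$ depends only on $W$ and $\Gamma$, running the same estimate for $-u$ yields $g(x)-u(x)\le R$. Thus it suffices to bound $\E[g(X_\tau)-g(x)]$ by $R$.

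Next I would control the stopping time $\tau$. The key observation is that, conditioned on $X_i\notin\Gamma$, the process enters $\Gamma$ at the next step with probability at least $q:=\tfrac12(1-\alpha)+\alpha\delta$. Indeed, step 3 of the process always moves into $\Gamma$ (which is possible precisely because \as1 guarantees $N_{X_i}\cap\Gamma\neq\varnothing$), contributing $\tfrac12(1-\alpha)$, while the random-walk step (step 1) lands in $\Gamma$ with probability $\sum_{y\in\Gamma}w_{X_i y}/d_{X_i}\ge\delta$ by the definition \eqref{delta1} of $\delta$, contributing $\alpha\delta$. Since the process is frozen once it enters $\Gamma$, this yields the geometric tail bound $\P(\tau>k)\le(1-q)^k$ for every $k\ge 0$.

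The final and most delicate step converts this tail bound into the logarithmic estimate using two complementary bounds on the increment $g(X_\tau)-g(x)$. Because every move of the process — random walk, argmax, or argmin — goes to a graph \emph{neighbor} of the current vertex, the difference $g(X_\tau)-g(x)$ telescopes over edges, so $|g(X_\tau)-g(x)|\le\tau\,\|\nabla g\|_\infty$; trivially we also have $|g(X_\tau)-g(x)|\le 2\|g\|_\infty$. I would split the expectation at the deterministic level $K=\tfrac1q\log(\|g\|_\infty/\|\nabla g\|_\infty)$, using the telescoping bound on $\{\tau\le K\}$ and the crude bound on $\{\tau>K\}$:
\[
\E[g(X_\tau)-g(x)]\le K\,\|\nabla g\|_\infty+2\|g\|_\infty\,\P(\tau>K)\le K\,\|\nabla g\|_\infty+2\|g\|_\infty(1-q)^K.
\]
Since $(1-q)^K\le e^{-qK}$ and $e^{-qK}=\|\nabla g\|_\infty/\|g\|_\infty$ for this $K$, the tail term is at most $2\|\nabla g\|_\infty$, while the first term equals $\tfrac{2\log(\|g\|_\infty/\|\nabla g\|_\infty)}{1-\alpha+2\alpha\delta}\|\nabla g\|_\infty$ after substituting $1/q=2/(1-\alpha+2\alpha\delta)$; this reproduces $R$ exactly.

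The main obstacle is precisely this balancing: one must combine the per-edge Lipschitz bound (sharp for small $\tau$) with the global bound $2\|g\|_\infty$ (needed on the rare event of large $\tau$) and tune $K$ so the two contributions land on the stated constants. The only genuinely loose end is the integrality of $K$, which the slack in the additive constant $2$ absorbs. I would also note that the estimate implicitly concerns the regime $\|g\|_\infty\ge\|\nabla g\|_\infty$, where $K\ge 0$; in the complementary case the bound follows directly from the maximum principle $\|u\|_\infty\le\|g\|_\infty$ for \eqref{eq:ssl_pLap}.
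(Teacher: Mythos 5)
Your proposal is correct and follows essentially the same route as the paper's proof: Lemma \ref{lem:main_tug}, the geometric tail bound $\P(\tau>k)\le(1-\tfrac12(1-\alpha)-\alpha\delta)^k$, the telescoping per-edge estimate, splitting the expectation at the level $K$ that balances the two terms, and handling the reverse inequality by the substitution $u\mapsto -u$, $g\mapsto -g$. Your explicit justification of the oddness of $\L_p$ and your remarks on the integrality of $K$ and the regime $\|g\|_\infty\ge\|\nabla g\|_\infty$ are welcome clarifications of points the paper leaves implicit, but they do not change the argument.
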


\begin{proof}
Throughout the proof let us write $\zeta = \|\nabla g\|_\infty\red\eps_W\nc$ for simplicity. Define the stopping time $\tau$ as in \eqref{eq:stopping}. By Lemma \ref{lem:main_tug}, we have that 
$$u(x) - g(x) \leq \E[g(X_\tau) -g(x) \, | \, X_0=x].$$
Let us estimate the right hand side above by fixing some $k$ (which we will choose shortly) and conditioning on $\tau>k$ or $\tau \leq k$. For simplicity of notation we will drop the conditioning $X_0=x$ below.  Then we have
\begin{align}\label{ine:1}
u(x) - g(x) \red  \leq \nc \E[g(X_\tau) - g(x) \, | \, \tau \leq k]\P(\tau \leq k) + \E[g(X_\tau) - g(x) \, | \, \tau > k]\P(\tau>k).
\end{align}
First, we estimate the probability $\P(\tau > k).$
Since each vertex has a labeled neighbour (by Assumption \as1), each step has probability at least $\tfrac{1}{2}(1-\alpha)+\alpha\delta$ of hitting $\Gamma$ and exiting, so a general estimate is
\begin{equation}\label{prob_exit}
\P(\tau > k) \leq \left[ 1 - \tfrac{1}{2}(1-\alpha)-\alpha\delta \right]^k \leq e^{-\frac{k}{2}(1-\alpha)-\alpha\delta k}.
\end{equation}

For any $k$, we can use a telescoping series to obtain
\[g(X_k) - g(x) = \sum_{j=0}^{k-1} (g(X_{j+1}) - g(X_j)) \leq \sum_{j=0}^{k-1}\|\nabla g\|_\infty\red \eps_W\nc = k\zeta.\]
It follows that
\begin{equation}\label{ine:11}
\E[g(X_\tau) - g(x) \, | \, \tau \leq k]  \leq k\zeta.
\end{equation}
Substituting \eqref{prob_exit} and \eqref{ine:11} in \eqref{ine:1}, we have that 
\begin{align}\label{diff_ug}
u(x) - g(x)&\leq k\zeta \P(\tau \leq k) + \E[g(X_\tau) - g(x) \, | \, \tau > k]e^{-\frac{k}{2}(1-\alpha)-\alpha\delta k}\\ \nonumber
& \leq k\zeta + 2\|g\|_\infty e^{-\frac{k}{2}(1-\alpha)-\alpha\delta k}.
\end{align}
%
We want to balance the two terms in the right hand side of \eqref{diff_ug}, so we choose $k$ so that
$$\zeta =\|g\|_\infty e^{- \frac{(1-\alpha)k}{2}-\alpha\delta k},$$ 
or equivalently  
$$k= \frac{2\log( \|g\|_\infty/\zeta)}{1-\alpha+2\alpha\delta}.$$
Therefore we have
\begin{align}\label{ine:2}
u(x) - g(x) 
&\leq \zeta(k +2) = \left(\frac{2\log( \|g\|_\infty/\zeta)}{1-\alpha+2\alpha\delta}+2\right)\zeta.
\end{align}
To estimate $g-u$, we use a similar argument, where we replace $u$ by $-u$ and $g$ by $-g$. This concludes the proof.
\end{proof}

It turns out we can use the same argument to establish a gradient estimate on the solution $u$ of the $p$-Laplace equation, \red which shows that the gradient of $u$ is controlled by the gradient of $g$. \nc
\begin{theorem}\label{th3}
Assume \as1 holds, and let $x$ and $y$ be two neighbors on the graph. Then, 
\red$$\|\nabla u\|_\infty\leq \left(\frac{4\log( \|g\|_\infty\|\nabla g\|_\infty^{-1}\eps_W^{-1})}{1-\alpha+2\alpha\delta}+5\right)\|\nabla g\|_\infty.$$\nc
\end{theorem}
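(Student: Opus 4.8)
The plan is to obtain the Lipschitz bound as an immediate corollary of the pointwise consistency estimate already proved in Theorem \ref{th1}, combined with a single application of the triangle inequality across the edge joining $x$ and $y$. This is the sense in which ``the same argument'' applies: all of the martingale and optional-stopping work has already been done in Lemma \ref{lem:main_tug} and Theorem \ref{th1}, and we merely need to compare the estimate at the two endpoints.

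Concretely, I would first record that Theorem \ref{th1} applies at every vertex, so in particular
\[
|u(x) - g(x)| \leq \left(\frac{2\log(\|g\|_\infty\|\nabla g\|_\infty^{-1})}{1-\alpha+2\alpha\delta}+2\right)\|\nabla g\|_\infty
\]
and the identical bound holds with $x$ replaced by $y$. Next, since $x$ and $y$ are neighbors we have $w_{xy}>0$, so by the very definition of $\|\nabla g\|_\infty$ as the maximal absolute difference of $g$ across edges, $|g(x)-g(y)| \leq \|\nabla g\|_\infty$. Then the triangle inequality gives
\[
|u(x)-u(y)| \leq |u(x)-g(x)| + |g(x)-g(y)| + |g(y)-u(y)|.
\]

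Substituting the two copies of the Theorem \ref{th1} estimate and the edge bound $|g(x)-g(y)|\le\|\nabla g\|_\infty$, the right-hand side becomes
\[
2\left(\frac{2\log(\|g\|_\infty\|\nabla g\|_\infty^{-1})}{1-\alpha+2\alpha\delta}+2\right)\|\nabla g\|_\infty + \|\nabla g\|_\infty,
\]
and collecting the two additive constants ($2\cdot 2 + 1 = 5$) and doubling the logarithmic term ($2\cdot 2 = 4$) yields exactly the claimed coefficient $\tfrac{4\log(\|g\|_\infty\|\nabla g\|_\infty^{-1})}{1-\alpha+2\alpha\delta}+5$. There is essentially no genuine obstacle here; the only thing to be checked is the elementary arithmetic of doubling the constant from Theorem \ref{th1} and adding the single cross-edge term, and the proof inherits whatever mild conditions (such as $\|g\|_\infty \geq \|\nabla g\|_\infty$, ensuring the optimizing index $k$ in Theorem \ref{th1} is nonnegative) are already in force for Theorem \ref{th1}. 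An alternative would be to re-run the optional-stopping argument from two distinct starting points $X_0=x$ and $X_0=y$ and couple the two processes, but this is strictly more work and produces no better constant, so the triangle-inequality route is preferable.
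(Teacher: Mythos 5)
Your proof is correct and is essentially identical to the paper's: the authors also apply the triangle inequality $|u(x)-u(y)|\leq |u(x)-g(x)|+|u(y)-g(y)|+|g(x)-g(y)|$, invoke Theorem \ref{th1} twice, and bound the cross-edge term by $\|\nabla g\|_\infty$, giving the same arithmetic $2\cdot 2+1=5$ and $2\cdot 2=4$.
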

\begin{proof}
We apply the triangle inequality
$$|u(x)-u(y)| \leq |u(x)-g(x)|+|u(y)-g(y)| + |g(x)-g(y)| $$
and use the bound in Theorem \ref{th1} twice, noting that $g(x)-g(y)=\nabla g(x,y)\red\eps_W\nc$.
\end{proof}

\subsubsection{Vertex classification consistency results}

We now turn to vertex classification results, where the label function $g:\X\to \R$ takes constant values, and hence it will have sharp transitions across edges in the graph. Thus, $\|\nabla g\|_\infty$ may not be small, and so Theorems \ref{th1} and \ref{th3} are not applicable.  We will focus on binary classification, where every point $x\in \X$ has a label zero or a label one, so $g:\X\to \{0,1\}$. To use the $p$-Laplace equation \eqref{eq:ssl_pLap}, we solve the equation with the binary values for the boundary condition $g:\Gamma \to \{0,1\}$, and threshold the solution $u(x)$ at $\frac{1}{2}$ to obtain a binary label prediction for each vertex $x\in \X\setminus \Gamma$.  Thus, we make the following definition.
\begin{definition}
Given $g:\X\to \{0,1\}$, a point $x \in \X$ is classified \textit{correctly} if $g(x)=\one_{u(x) \geq \frac{1}{2}}$, where $u$ is the solution to \eqref{eq:ssl_pLap}.
\end{definition}


In this section, we focus on identifying which points in the graph are classified correctly. This requires a few definitions.
\begin{definition}
A path on the graph is a sequence of edges which joins a sequence of vertices.
The length of a path is the number of edges the path consists of.
\end{definition}

\begin{definition}
The distance $\dist(x,y)$ between two vertices $x,y\in \X$ is the length of the shortest path connecting the vertices. The distance between a vertex $x$ and a set of vertices $A\subset \X$, denoted $\dist(x,A)$, is the smallest distance between $x$ and any of the vertices $y\in A$.
\end{definition}

For $i=0,1$ we define
\begin{equation}\label{eq:xi}
\X_i = \{x \in \X : g(x)=i \}
\end{equation}
and note that $\X=\X_0\cup\X_1$. For any integer $m \geq 0$ and $i=0,1$, we define 
$$A_m^i = \{x \in \X_i: \dist(x,\X_{1-i}) \leq m \},$$
and note that $A_m= A_m^0 \cup A_m^1$. That is to say, $A_m^0$ is the set of points from class $0$ that are at most $m$ steps away from the set of class $1$ points, and vice-versa for $A_m^1$. This means that when $m$ is small, the set $A_m$ contains points that are sufficiently close to the boundary between the two classes that we may expect data points in $A_m$ to be misclassified. 

Observe that for any $x\in\X$ there is a positive probability of at least $\tfrac{1-\alpha}{2} + \alpha \delta=\tfrac{p-2+2\delta}{2(p-1)}$ of hitting the boundary $\Gamma.$ Even when $p=2,$ this probability is positive, since it is exactly $\delta>0$. We use this to establish the classification bound below, which shows that whenever a vertex $x$ is sufficiently far, in terms of graph distance, away from the true decision boundary, it will be classified correctly.
\begin{theorem}\label{th2}
Assume \as1. Let  $\k$ be the smallest integer strictly larger than 
\begin{equation}\label{k_defn}
\frac{2\log (2)}{1-\alpha+2\alpha \delta}= \frac{2(p-1)\log(2)}{p-2+2\delta}.
\end{equation}
Then every $x\not\in A_\k$ is classified correctly. 
\end{theorem}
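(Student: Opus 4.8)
The plan is to combine the tug-of-war martingale bound of Lemma \ref{lem:main_tug} with the elementary observation that the process $X_i$ moves to a graph neighbor at \emph{every} step, so it cannot reach the opposite class any faster than the graph distance permits.

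First I would dispose of two trivial reductions. If $x\in\Gamma$ then $u(x)=g(x)\in\{0,1\}$ and $x$ is classified correctly, so I may assume $x\notin\Gamma$. The case $g(x)=1$ follows from the case $g(x)=0$ by the symmetry $u\mapsto 1-u$, $g\mapsto 1-g$: a short computation gives $\Lp(1-u)=-\Lp u$, so $v:=1-u$ solves \eqref{eq:ssl_pLap} with boundary data $h:=1-g\in\{0,1\}$. This merely interchanges the two classes while leaving $\Gamma$, and hence $\delta$ and $k$, unchanged, and it turns $x\notin A_k$ with $g(x)=1$ into the corresponding statement for $(v,h)$ with $h(x)=0$. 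Thus it suffices to treat $g(x)=0$ and prove $u(x)<\tfrac12$.

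So suppose $g(x)=0$ and $x\notin A_k$, which (since $x\in\X_0$) means $\dist(x,\X_1)>k$. Lemma \ref{lem:main_tug} gives
\[u(x)=u(x)-g(x)\leq \E[g(X_\tau)-g(x)\mid X_0=x]=\E[g(X_\tau)\mid X_0=x]=\P(X_\tau\in\X_1\mid X_0=x),\]
where the last two equalities use $g\in\{0,1\}$ together with $X_\tau\in\Gamma$. The crux is now to notice that each of the three transition rules $1$--$3$ defining $X_i$ sends the token to a vertex of $N_{X_i}$, so $\dist(x,X_i)\leq i$ for all $i$. Consequently $\{X_\tau\in\X_1\}\subseteq\{\tau\geq\dist(x,\X_1)\}\subseteq\{\tau>k\}$, and therefore $u(x)\leq\P(\tau>k\mid X_0=x)$. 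Finally I would invoke the exit-probability estimate \eqref{prob_exit}, namely $\P(\tau>k)\leq e^{-k(1-\alpha+2\alpha\delta)/2}$, and observe that by the definition of $k$ as the smallest integer exceeding $\tfrac{2\log 2}{1-\alpha+2\alpha\delta}$ the exponent satisfies $k\,\tfrac{1-\alpha+2\alpha\delta}{2}>\log 2$. Hence $u(x)\leq e^{-k(1-\alpha+2\alpha\delta)/2}<\tfrac12$, which is exactly correct classification at a point with $g(x)=0$.

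The main obstacle — really the only nonroutine point — is recognizing that the distance to the opposite class controls the stopping time, i.e.\ the inclusion $\{X_\tau\in\X_1\}\subseteq\{\tau>k\}$; everything else is bookkeeping built on ingredients already assembled for Lemma \ref{lem:main_tug} and Theorem \ref{th1}. The one place to be careful is rule $3$, which deliberately sends the token into $\Gamma$: I would verify that it still only ever moves to a neighbor (it does, since it selects from $N_{X_i}\cap\Gamma$), so that the neighbor-step property, and therefore the graph-distance argument, holds uniformly across all three rules.
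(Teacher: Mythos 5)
Your proposal is correct and follows essentially the same route as the paper's proof: both rest on Lemma \ref{lem:main_tug}, the observation that the process moves only to graph neighbors so it cannot reach the opposite class before time $\dist(x,\X_{1-i})>\k$, and the exit estimate \eqref{prob_exit} combined with the choice of $\k$. Your explicit $u\mapsto 1-u$ symmetry is just a cleaner packaging of the paper's ``the analogous estimate on $g-u$ is similarly obtained.''
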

\begin{proof}	
Let $x\in \X\setminus A_\k$ and assume $X_0=x$. Define the stopping time $\tau$ as in \eqref{eq:stopping}. For any point $x\in\X$ we have that 
\begin{equation}\label{ine:3}
u(x) - g(x)\leq \E[g(X_\tau) - g(x) \, | \, \tau \leq \k]\P(\tau \leq \k) + \E[g(X_\tau) - g(x) \, | \, \tau > \k]\P(\tau>\k).
\end{equation}
We want to classify $x$ correctly whether it belongs to $\X_0$ or $\X_1.$ Recall that the labels of vertices  in  $\X_0$ are zeros and in $\X_1$ are ones. 

Since $x \notin A_{\k}$, the labels of vertices reached by the stopping time $\tau \leq {\k}$ can only be labels from the same-class, so $g(X_\tau)= g(x).$ Thus, when  $\tau \leq {\k}$ the first term on the right hand side of \eqref{ine:3} is zero.
Then, using that $g$ is between zero and one, and that equation \eqref{prob_exit} holds, we obtain 
\begin{equation}\label{ine:4} 
u(x) - g(x)\leq  \E[g(X_\tau) - g(x) \, | \, \tau > {\k}]\P(\tau>{\k}) \leq  \P(\tau > {\k})\leq e^{-\frac{{\k}}{2}(1-\alpha)-\alpha\delta {\k}}
\end{equation}
The analogous estimate on $g-u$ is similarly obtained, and so we have 
$$|u(x)-g(x)|\leq e^{-\frac{{\k}}{2}(1-\alpha)-\alpha\delta {\k}}.$$
For \textit{correct} classification of $x$, we need precisely $|u(x) - g(x)|< \frac{1}{2},$  which is equivalent to
$$ e^{-\frac{{\k}}{2}(1-\alpha)-\alpha\delta {\k}} <\frac{1}{2},$$
and is satisfied by the choice of $\k$ in \eqref{k_defn}

\end{proof}

\subsection{Geometric graphs}\label{sec:e-graph}

In this section we specialize our results to geometric graphs, which include the very commonly used random geometric $\eps$-graphs and $k$-nearest neighbor ($k$-NN) graphs. The purpose of this section is to provide conditions under which  $\eps$-graphs and $k$-NN graphs, along with other graphs with geometric $\eps$-scaling, fulfill the general assumption \as1 in Section \ref{sec:RGG}, so that we can apply the theory developed in Section \ref{sec:RGG} to these graphs.

\subsubsection{Main assumptions}
In order to construct $\eps$-graphs and $k$-NN graphs and construct the labeled data set $\Gamma$, we need the following assumptions. Note that assumptions \as2,\as3,\as4 also appear in \cite{calder2023rates}.

\begin{itemize}
\item[\as2] Let $d\geq 2$ and let $\Omega \subset \R^d$ be an open, bounded, and connected domain with a Lipschitz boundary. Let $x_1, x_2, \dots, x_n$ be \textit{i.i.d.} with continuous density $\rho$ satisfying $$0 < \rho_{\min} := \inf_{x\in \Omega}\rho(x) \leq \sup_{x \in \Omega}p(x) =: \rho_{\max}< \infty.$$ We define $\X = \{x_1, x_2, \dots, x_n \}.$
\item[\as3] We construct $\Gamma$ in the following way. Let $g \in \Lip(\bar{\Omega})$, where $\Omega$ is defined as in \as2. Let $\beta\in (0,1)$. For each $x\in \X$, define an independent random variable $z(x)\sim Bernoulli(\beta)$ and assign $x\in\Gamma$ whenever $z(x)=1$.  Each training data point $x\in \Gamma$ is assigned a label $g(x)$.
\item[\as4] We define an interaction potential $\eta: [0, \infty) \to [0,\infty)$. The interaction potential $\eta(\cdot)$ has support on the interval $[0,1]$, it is non-increasing, and $\eta$ restricted to $[0,1]$ is Lipschitz continuous. For $t\geq 1$ we extend $\eta$ so that $\eta(t) = 0.$ Moreover, $\eta(\tfrac{1}{2})>0$ and $\eta$ integrates to $1:$
$$ \int_{\R^d} \eta(|x|)dx =1.$$
We define $\eta_\eps(t) = \frac{1}{\eps^d}\eta(t/\eps).$
\item[\as5] (Geometric graphs of length $\eps$) There exist positive constants $c_1, c_2, \eps_{1}, \eps_{2}$ such that for every $x, y \in \X$ we have that
$$c_1 \eps_{1}^{-d}\mathbbm{1}_{|x-y|<\eps_{1}}\leq w_{xy} \leq c_2\eps_{2}^{-d}\mathbbm{1}_{|x-y|<\eps_{2}} .$$
\end{itemize}
We note that assumptions \as2 and \as4 are used in the $\eps-$graph construction (see Section \ref{ss_eps_graphs}).  In Lemma \ref{A5_eps} we prove that assumption \as5 holds for $\eps$-graphs.  Assumption \as3 has to do with how the labeled data points are chosen, and we note that the parameter $\beta\in (0,1)$ is the \emph{label rate}.

Assumption \as2 is satisfied for $k$-NN-graphs by construction. In Lemma \ref{A5_kNN} we prove that assumption \as5 holds with high probability for $k$-NN graphs.  Applying \as5, we prove \as1 holds with high probability, which means that Theorems \ref{th1} and \ref{th3} hold.
For $x\in \X$ we define $d_n(x)$ and $p_n(x)$ as follows:
\begin{align}
&d_{n}(x) = \Sigma_{y \in \X}w_{xy},\\ 
&p_{n}(x) =\Sigma_{y \in \X}w_{xy}\mathbbm{1}_{y\in\Gamma}. \label{p_n}
\end{align}
Then, as before, we define
\begin{equation}\label{delta}
 \delta= \min_{x\in \X}\frac{p_n(x)}{d_n(x)} =\min_{x\in \X}\frac{\Sigma_{y\in \X}w_{xy}\mathbbm{1}_{y\in\Gamma}}{\Sigma_{z\in \X}w_{xz}} .
 \end{equation}
 
\subsubsection{The \texorpdfstring{$\eps$}{epsilon}-Graphs}\label{ss_eps_graphs}

Let $0 < \epsilon \leq 1$. We construct our $\eps$-graph as follows.
\begin{definition}
We define an $\eps$-graph according to the following rules. The vertices $\X= \{x_1, \dots x_n \}$ are constructed according to \as2 and the labeled set $\Gamma$ is chosen according to \as3. We assume \as4 and the symmetric  edges $w_{xy} = w_{yx}$ and are obtained as follows: 
\begin{equation}
w_{xy} = 
\eta_{\eps}(|x-y|)
\end{equation}
\end{definition}

\begin{lemma}\label{A5_eps}
$\eps$-Graphs satisfy assumption \as5 with constants $c_1=2^{-d}\eta(\tfrac{1}{2}), c_2=\sup{\eta}, \\ \eps_1=\tfrac{\eps}{2},$ and $\eps_2=\eps$.
\end{lemma}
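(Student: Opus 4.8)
The plan is to verify the two-sided bound in \as5 directly, by substituting the explicit $\eps$-graph weight $w_{xy} = \eta_\eps(|x-y|) = \eps^{-d}\eta(|x-y|/\eps)$ and exploiting only two structural features of $\eta$ guaranteed by \as4: that it is supported on $[0,1]$ and that it is non-increasing. Unlike the $k$-NN case treated in Lemma \ref{A5_kNN}, the $\eps$-graph weights are deterministic, so there is no probabilistic content; the statement collapses to an elementary pointwise inequality that must hold for every pair $x,y \in \X$.

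For the upper bound I would take $\eps_2 := \eps$ and argue as follows. Since $\eta$ is supported on $[0,1]$, we have $\eta(|x-y|/\eps) = 0$ whenever $|x-y| \geq \eps$, so $w_{xy}$ vanishes outside the ball of radius $\eps_2$; on the region $|x-y| < \eps$ we bound $\eta(|x-y|/\eps) \leq \sup \eta =: c_2$. Combining these gives $w_{xy} \leq c_2 \eps^{-d}\mathbbm{1}_{|x-y|<\eps} = c_2 \eps_2^{-d}\mathbbm{1}_{|x-y|<\eps_2}$, which is exactly the claimed upper estimate.

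For the lower bound I would take $\eps_1 := \eps/2$ and split on the indicator. When $|x-y| \geq \eps/2$ the indicator $\mathbbm{1}_{|x-y|<\eps_1}$ vanishes and the inequality is immediate since $w_{xy} \geq 0$. When $|x-y| < \eps/2$ we have $|x-y|/\eps < 1/2$, so monotonicity of $\eta$ yields $\eta(|x-y|/\eps) \geq \eta(\tfrac{1}{2}) > 0$, whence $w_{xy} \geq \eps^{-d}\eta(\tfrac{1}{2})$. It then remains only to check that this matches $c_1 \eps_1^{-d}$ with $c_1 = 2^{-d}\eta(\tfrac{1}{2})$: indeed $c_1 \eps_1^{-d} = 2^{-d}\eta(\tfrac{1}{2})(\eps/2)^{-d} = 2^{-d}\eta(\tfrac{1}{2})\,2^{d}\eps^{-d} = \eps^{-d}\eta(\tfrac{1}{2})$, so the lower bound holds with equality in the constant.

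There is essentially no genuine obstacle here. The only points requiring care are the bookkeeping of the indicator cases — ensuring both inequalities hold trivially on the regions where the respective indicators vanish — and the arithmetic identity $c_1 \eps_1^{-d} = \eps^{-d}\eta(\tfrac{1}{2})$ that pins down the stated constants. The hypotheses $\eta(\tfrac{1}{2}) > 0$ and $\eta$ non-increasing from \as4 are precisely what render the lower bound both valid and strictly positive, thereby ensuring that \as5 holds with positive constants as asserted.
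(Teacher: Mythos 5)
Your proof is correct and follows essentially the same route as the paper's: both verify the two-sided bound pointwise using the support of $\eta$ on $[0,1]$ for the upper estimate and the monotonicity together with $\eta(\tfrac12)>0$ for the lower estimate, followed by the same rescaling arithmetic $c_1\eps_1^{-d}=\eps^{-d}\eta(\tfrac12)$. Your version simply spells out the indicator case-splitting that the paper leaves implicit.
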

\begin{proof}
By their construction, $\eps-$graphs have the following property:
\begin{align}
w_{xy}=\eps^{-d}\eta(\frac{|x-y|}{\eps}) \in&
[\eps^{-d}\eta(\tfrac{1}{2})\mathbbm{1}_{|x-y|<\frac{\eps}{2}}, \eps^{-d}\sup{\eta}\mathbbm{1}_{|x-y|<\eps}]\\
\equiv & [(\eps/2)^{-d}(2^{-d}\eta(\tfrac{1}{2}))\mathbbm{1}_{|x-y|<\frac{\eps}{2}}, \eps^{-d}\sup{\eta}\mathbbm{1}_{|x-y|<\eps}].
\end{align}
so \as5 holds, with the constants stated in the Lemma.
\end{proof}

\subsubsection{The k-Nearest Neighbors Graphs}

We follow \cite{calder2022improved} in constructing a $k$-NN graph. We fix a number  $k\in \N,$ such that $1\leq k \leq n-1$.  The number of neighbors $N_\eps(x)$ of $x$ in an $\eps$-ball is
$$ N_\eps(x):=\sum_{j: 0<|x_j-x| \leq \eps} 1$$ and the distance to the $k$-nearest neighbor is
$$\eps_k(x):=\min\{\eps>0~: N_\eps(x)\geq k\}.$$

\begin{definition}
	We define a relation $\sim_k$ on $\X\times \X$ by declaring
	$$x_i \sim_k x_j$$
	if $x_j$ is among the $k$ nearest neighbors of $x_i$. 
\end{definition}

\begin{definition}
For the \textit{symmetric} k-nearest neighbor graph we construct the vertices according to \as2. If $x_i \sim_k x_j$ \textbf{or} $x_j \sim_k x_i$, we place an edge between $x_i$ and $x_j$. The edges are symmetric, meaning $w_{xy}=w_{yx}$ and
$$w_{xy}=\eta_{\max\{\eps_k(x), \eps_k(y)\}}(|x-y|). $$
\end{definition}


\begin{definition}
In the \textit{mutual} k-nearest neighbor graph we construct the vertices according to \as2. If $x_i \sim_k x_j$ \textbf{and} $x_j \sim_k x_i$, we place an edge between $x_i$ and $x_j$. The edges are symmetric, meaning $w_{xy}=w_{yx}$ and
$$w_{xy}=\eta_{\min\{\eps_k(x), \eps_k(y)\}}(|x-y|). $$
\end{definition}

In the following lemma, we verify that \as5 holds for $k$-NN graphs.
\begin{lemma}\label{A5_kNN} 
	There exists a $c>0,$ such that symmetric k-NN graphs and mutual k-NN graphs  satisfy assumption \as5 with probability $1-4\exp(-\tfrac{c}{2}(k/n)^{2/d}k),$ provided $k$ is not too large, meaning
	$$1\leq k\leq c n \eps^{d}$$ holds. 
\end{lemma}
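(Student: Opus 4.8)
The plan is to reduce the verification of \as5, for both the symmetric and mutual $k$-NN graphs, to a single uniform statement about the $k$-nearest-neighbor radius: there exist deterministic radii $r_-\le r_+$ with $r_\pm\asymp(k/n)^{1/d}$ such that $r_-\le\eps_k(x)\le r_+$ for every $x\in\X$, on an event of the stated probability. First I would show that this bracketing implies \as5. In the symmetric graph the bandwidth is $\sigma:=\max\{\eps_k(x),\eps_k(y)\}$ and in the mutual graph $\sigma:=\min\{\eps_k(x),\eps_k(y)\}$; in either case $\sigma\in[r_-,r_+]$ and $w_{xy}=\sigma^{-d}\eta(|x-y|/\sigma)$. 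Using $\sup\eta<\infty$ and $\sigma\ge r_-$ gives the upper bound $w_{xy}\le r_-^{-d}\sup\eta\,\mathbbm{1}_{|x-y|<r_+}$, so \as5 holds with $\eps_2=r_+$ and $c_2=(r_+/r_-)^d\sup\eta$. For the lower bound, if $|x-y|<r_-/2$ then $|x-y|/\sigma<\tfrac12$, so the monotonicity of $\eta$ and $\eta(\tfrac12)>0$ give $w_{xy}\ge r_+^{-d}\eta(\tfrac12)$; hence \as5 holds with $\eps_1=r_-/2$ and $c_1=(r_-/2r_+)^d\eta(\tfrac12)$. All four constants are $n$-independent (the ratio $r_+/r_-$ is controlled by $\rho_{\max}/\rho_{\min}$, $d$, and a boundary constant), so the two graph types are handled identically.

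The substantive task is then to prove the radius bracketing with the right probability. For a fixed vertex $x$ and radius $r$, the count $N_r(x)$ is $\mathrm{Binomial}(n-1,p_r(x))$ with $p_r(x)=\int_{B(x,r)\cap\Omega}\rho$. Since $\Omega$ has a Lipschitz boundary, there are constants $\theta\in(0,1]$ and $r_0>0$ with $|B(x,r)\cap\Omega|\ge\theta\,\omega_d r^d$ for all $x\in\Omega$ and $r\le r_0$, where $\omega_d$ is the volume of the unit ball; combined with \as2 this yields $\rho_{\min}\theta\omega_d r^d\le p_r(x)\le\rho_{\max}\omega_d r^d$. I would fix a deviation parameter $\lambda\asymp(k/n)^{1/d}$ and define $r_+$ by $n\rho_{\min}\theta\omega_d r_+^d=(1+\lambda)k$ and $r_-$ by $n\rho_{\max}\omega_d r_-^d=(1-\lambda)k$, so that $\E N_{r_+}(x)\ge(1+\lambda)k$ and $\E N_{r_-}(x)\le(1-\lambda)k$ for every $x$, with $r_\pm\asymp(k/n)^{1/d}$ and $r_+/r_-$ bounded. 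Multiplicative Chernoff bounds then give
\[ \P\big(\eps_k(x)>r_+\big)=\P\big(N_{r_+}(x)<k\big)\le e^{-c'\lambda^2 k},\qquad \P\big(\eps_k(x)<r_-\big)=\P\big(N_{r_-}(x)\ge k\big)\le e^{-c'\lambda^2 k}, \]
and with $\lambda\asymp(k/n)^{1/d}$ the exponent becomes $c'\lambda^2 k\asymp(k/n)^{2/d}k$, matching the target rate. The hypothesis $k\le cn\eps^d$ enters here to guarantee $r_+\le\eps\le r_0$, so that both the support scaling of $\eta$ and the boundary volume bound apply at the relevant scale.

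Finally I would pass from a single vertex to all vertices by a union bound, which contributes a factor of order $n$ (and harmless factors of $2$ for the two tails and for the two graph types). I expect the main obstacle to be absorbing this factor $n$ into a single clean exponential of the form $4\exp(-\tfrac c2(k/n)^{2/d}k)$: writing $n=e^{\log n}$, this succeeds only when $(k/n)^{2/d}k$ dominates $\log n$, which is precisely what the halving of the constant (the $\tfrac c2$ versus the $c$ produced by Chernoff) and the implicit admissible range of $k$ are for. A cleaner alternative, which I would pursue if the naive union bound proves too lossy, is to replace the per-vertex estimate by a uniform concentration of the empirical measure over the family of all balls $\{B(x,r)\}$, whose finite VC dimension yields simultaneous control of every count $N_r(x)$ with a single exponential and no explicit factor $n$. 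Either way, the crux is the uniformity over vertices together with the correct matching of the deviation scale $\lambda\asymp(k/n)^{1/d}$ to the claimed exponent $(k/n)^{2/d}k$; the reduction in the first step and the pointwise Chernoff estimates are routine by comparison.
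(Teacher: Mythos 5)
Your proposal is correct and has the same two-step skeleton as the paper's proof: first establish a deterministic bracketing $r_-\le \eps_k(x)\le r_+$ with $r_\pm \asymp (k/n)^{1/d}$ on a high-probability event, then deduce \as5 from the bracketing using the monotonicity of $\eta$, $\eta(\tfrac12)>0$, and the support condition --- your choices $\eps_1=r_-/2$, $c_1=(r_-/2r_+)^d\eta(\tfrac12)$, $\eps_2=r_+$, $c_2=(r_+/r_-)^d\sup\eta$ match the paper's $\eps_1=m/2$, $c_1=\eta(\tfrac12)(m/2M)^d$, $\eps_2=M$, $c_2=(M/m)^d\max\eta$ essentially verbatim. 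The one genuine difference is in how the bracketing is obtained: the paper imports it as a black box (Lemma 3.8 of \cite{calder2022improved}, which gives $\P(|\alpha_d\rho(x)n\eps_k(x)^d-k|\ge \tfrac12 k)\le 4\exp(-c\gamma^2k)$ with $\gamma=\tfrac{1}{\sqrt2}(k/n)^{1/d}$), whereas you rederive it from multiplicative Chernoff bounds on the binomial counts $N_r(x)$, including the Lipschitz-boundary volume correction. Your version is self-contained and, notably, more careful on one point the paper glosses over: the paper states the per-vertex concentration and then asserts the bracketing for all $x\in\X$ without an explicit union bound, so its stated probability carries no factor of $n$; you correctly flag that a union bound costs a factor of $n$ which must be absorbed into the exponential (requiring $(k/n)^{2/d}k\gtrsim\log n$, or a VC-type uniform bound over balls). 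That is a real, if minor, gap in the paper's write-up that your argument addresses.
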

\begin{proof}
	Consider $x\in\X$ --- a vertex in the $k-$NN graph. 
	We set
	$$\gamma = \tfrac{1}{\sqrt{2}}(k/n)^{1/d}.$$
	Let $\alpha_d$ be the volume of the $d$-dimensional Euclidean unit ball. 
	
	We apply Lemma 3.8 from \cite{calder2022improved} to estimate the expected value of the number of neighbors: we have that 
	$$\P(|\alpha_d \rho(x)n\eps_k(x)^d-k|\geq \tfrac{1}{2} k)\leq 4\exp(-c\gamma^2 k).$$
	Thus, with probability $1-4\exp(-c\gamma^2k)$ we obtain
	\begin{equation}
	\alpha_d \rho(x)n\eps_k^d \in \big[\tfrac{k}{2}, \tfrac{3k}{2}\big], \nonumber
	\end{equation}
	or equivalently
	$$ \left(\frac{k}{2\alpha_d\rho(x)n}\right)^{1/d} \leq \eps_k \leq \left(\frac{3k}{2\alpha_d \rho(x)n}\right)^{1/d}.$$	
	 Denote $m:=\inf_{x\in X}{(\frac{k}{2\alpha_d\rho(x)n})^{1/d}}$ and
	 $M:=\sup_{x\in X}(\frac{3k}{2\alpha_d \rho(x)n})^{1/d}.$ Thus, with probability at least $1-4\exp(-c\gamma^2k)$, we have:
	 $$m\leq \eps_k \leq M .$$ 
  In order to obtain the constants in \as5, we bound $w_{xy}$ from below and above.
	 Note that $\eps_k$ comes from the vertex $x$ or the vertex $y$, and that $\eta$ is a non-increasing function. 
  
  We start with the lower bound of $w_{xy}$:	 
	  \begin{align}
	 	w_{xy} = \frac{1}{\eps^d_k}\eta\left(\frac{|x-y|}{\eps_k} \right)  \nonumber
	 	&= \frac{1}{\eps^d_k}\eta\left(\frac{|x-y|}{\eps_k} \right)\mathbbm{1}_{|x-y|\leq\eps_k} \\ \nonumber
	 	&= \frac{1}{\eps^d_k}\eta\left(\frac{|x-y|}{\eps_k} \right)\mathbbm{1}_{|x-y|\leq m/2} +  \frac{1}{\eps^d_k}\eta\left(\frac{|x-y|}{\eps_k} \right)\mathbbm{1}_{|x-y|\in[m/2,\eps_k]} \\ \nonumber
	 	&\geq \frac{1}{\eps^d_k}\eta\left(\frac{m/2}{\eps_k} \right)\mathbbm{1}_{|x-y|\leq m/2} \\ \nonumber
	 	&\geq \frac{1}{\eps^d_k}\eta\left(\frac{1}{2} \right)\mathbbm{1}_{|x-y|\leq m/2} \\ \nonumber
	 	&= \eta\left(\frac{1}{2} \right) \left(\frac{m}{2\eps_k} \right)^d\left(\frac{m}{2}\right)^{-d}\mathbbm{1}_{|x-y|\leq m/2} \\ \nonumber
	 	&\geq \eta\left(\frac{1}{2} \right) \left(\frac{m}{2M} \right)^d\left(\frac{m}{2}\right)^{-d}\mathbbm{1}_{|x-y|\leq m/2}. \\ \nonumber	 	
	 \end{align}
	 We choose $\eps_1=\frac{m}{2}$ and $c_1=\eta(\frac{1}{2})(\frac{m}{2M})^d.$
	 
	 Next, we compute the upper bound for the edge weight:
	 \begin{align}
	 	w_{xy} = \frac{1}{\eps^d_k}\eta\left(\frac{|x-y|}{\eps_k} \right)  \nonumber
	 	&= \frac{1}{\eps^d_k}\eta\left(\frac{|x-y|}{\eps_k}  \right)\mathbbm{1}_{|x-y|\leq\eps_k} \\ \nonumber
	 	&\leq \eps_k^{-d}\max\eta \mathbbm{1}_{|x-y|\leq\eps_k}\\  \nonumber
	 	&= M^{-d} \left( \frac{M}{\eps_k} \right)^{d}\max\eta \mathbbm{1}_{|x-y|\leq\eps_k} \\ \nonumber
	 	&\leq M^{-d} \left( \frac{M}{m} \right)^{d}\max\eta \mathbbm{1}_{|x-y|\leq M}\\  \nonumber
	 	&\leq M^{-d} \mathbbm{1}_{|x-y|\leq M}\left\{ \left(\frac{M}{m} \right)^{d}\max\eta \right\}. \nonumber
	 \end{align}
	 Now, we choose $c_2=\left\{ \left(\frac{M}{m} \right)^{d}\max\eta \right\}$ and $\eps_2=M$.
	 This establishes all the constants in \as5, which concludes the proof.
\end{proof}

\subsubsection{General geometric graphs}

Now that we have shown that $\eps$-graphs and $k$-NN graphs satisfy \as5, we proceed to study general geometric graphs, which are essentially only assumed to satisfy \as5. This covers a wide variety of geometrically constructed graphs, including, but not limited to, $\eps$-graphs and $k$-NN graphs.

\begin{theorem} \label{th20}
Assume that \as2,\as3, \as4,\as5 hold. Consider the labeling rate $\beta$ and $\delta$ defined as in \eqref{delta}. Then, there exist positive constants $ c_3, c_4,$ and $c$ such that $\delta \geq c\beta$ and \as1 
hold with probability at least 
$$1-n\exp(-c_3n\eps_2^d)-n\exp(-c_4\beta n\eps_1^d).$$
Here $c_3, c_4,$ and $c$ only depend on $\eta, d, \Omega$, and $\rho.$ 
\end{theorem}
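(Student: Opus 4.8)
The plan is to prove the single inequality $\delta \geq c\beta$ on a high-probability event, and then observe that \as1 is an automatic consequence. Indeed, since $\delta = \min_{x\in\X} p_n(x)/d_n(x)$ with $d_n(x)>0$, the bound $\delta \geq c\beta > 0$ forces $p_n(x) = \sum_{y\in\X} w_{xy}\one_{y\in\Gamma} > 0$ for every $x$, which produces a labeled vertex $y\in\Gamma$ with $w_{xy}>0$, i.e. $y\in N_x\cap\Gamma$. Thus it suffices to lower bound $\delta$.

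First I would use \as5 to convert the weighted ratio $p_n(x)/d_n(x)$ into a ratio of point counts. Writing $N_r(x) = \#\{y\in\X : |x-y|<r\}$ and $M_r(x) = \#\{y\in\X\cap\Gamma : |x-y|<r\}$, the lower bound $w_{xy}\ge c_1\eps_1^{-d}\one_{|x-y|<\eps_1}$ gives $p_n(x)\ge c_1\eps_1^{-d}M_{\eps_1}(x)$, while the upper bound $w_{xy}\le c_2\eps_2^{-d}\one_{|x-y|<\eps_2}$ gives $d_n(x)\le c_2\eps_2^{-d}N_{\eps_2}(x)$, so that
$$\delta \ge \frac{c_1\eps_2^d}{c_2\eps_1^d}\,\min_{x\in\X}\frac{M_{\eps_1}(x)}{N_{\eps_2}(x)}.$$
It then remains to show, with high probability and uniformly in $x\in\X$, that $M_{\eps_1}(x)\gtrsim \beta n\eps_1^d$ and $N_{\eps_2}(x)\lesssim n\eps_2^d$; note that the powers of $\eps_1,\eps_2$ cancel, leaving $\delta\gtrsim\beta$ with a constant depending only on $c_1,c_2,\rho_{\min},\rho_{\max}$ and the domain.

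The two counting bounds follow from concentration of binomial sums together with a union bound over the $n$ vertices. Fixing $x=x_i$, the remaining $n-1$ points are \iid~with density $\rho$, so $N_{\eps_2}(x)$ and $M_{\eps_1}(x)$ (the latter after the independent $Bernoulli(\beta)$ thinning of \as3) are binomial with means $\mu = (n-1)\int_{B(x,\eps_2)\cap\Omega}\rho$ and $\nu = \beta(n-1)\int_{B(x,\eps_1)\cap\Omega}\rho$. Here I would invoke the standard volume lower bound for Lipschitz domains from \as2: there is $\theta=\theta(\Omega)>0$ with $|B(x,r)\cap\Omega|\ge\theta\alpha_d r^d$ for all $x\in\Omega$ and small $r$, which controls the boundary and yields $\nu\ge \tfrac12\beta n\rho_{\min}\theta\alpha_d\eps_1^d$ and $\mu\le n\rho_{\max}\alpha_d\eps_2^d$. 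A Chernoff bound then gives $\P(M_{\eps_1}(x)<\tfrac12\nu)\le e^{-\nu/8}\le e^{-c_4\beta n\eps_1^d}$ and $\P(N_{\eps_2}(x)>2\mu)\le e^{-\mu/3}\le e^{-c_3 n\eps_2^d}$, where I use $\mu\gtrsim n\eps_2^d$ (again via the volume lower bound) to convert $e^{-\mu/3}$ into the advertised exponent. Taking a union bound over the $n$ vertices produces the failure probability $n e^{-c_3 n\eps_2^d}+n e^{-c_4\beta n\eps_1^d}$.

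Combining the two uniform counting bounds in the displayed inequality for $\delta$ gives $\delta\ge \tfrac{c_1\rho_{\min}\theta}{4c_2\rho_{\max}}\beta=:c\beta$ on the complement of the failure event, and, as noted above, \as1 follows from $\delta>0$ (equivalently, $M_{\eps_1}(x)\ge\tfrac12\nu>0$ already forces the integer $M_{\eps_1}(x)\ge 1$). I expect the main obstacle to be the careful treatment of boundary effects: near $\partial\Omega$ the ball $B(x,\eps_1)$ may protrude from $\Omega$, so the expected number of labeled neighbors degrades, and establishing the uniform volume lower bound $|B(x,r)\cap\Omega|\ge\theta\alpha_d r^d$ from the Lipschitz regularity of $\partial\Omega$ is what guarantees that even boundary vertices retain $\Theta(\beta n\eps_1^d)$ labeled neighbors. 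The remaining care is bookkeeping: ensuring the binomial means are large enough that the Chernoff exponents dominate and the $\log n$ loss from the union bound is absorbed, and tracking the dependence of the final constants $c,c_3,c_4$ on $\eta,d,\Omega,\rho$ through $c_1,c_2,\theta$.
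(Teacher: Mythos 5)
Your proposal is correct and follows essentially the same route as the paper: use \as5 to reduce $p_n(x)$ and $d_n(x)$ to counts of (labeled) points in balls of radii $\eps_1,\eps_2$, apply concentration (you use Chernoff for the binomial counts, the paper uses Bernstein on the scaled indicator sums --- equivalent here), union bound over the $n$ vertices, and deduce \as1 from $p_n(x)>0$. Your explicit invocation of the volume lower bound $|B(x,r)\cap\Omega|\ge\theta\alpha_d r^d$ for Lipschitz domains is a point the paper leaves implicit but does need, so that is a welcome extra precision rather than a deviation.
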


\begin{proof}
	Fix $x\in\X$. Because of \as5, we have that:
	\begin{align}
		d_n(x)=\sum_{y\in N_x}w_{xy}  &\leq \sum_{y\in \X\setminus x}c_2\mathbbm{1}_{|x-y|\leq {\eps}_2}{\eps}_2^{-d} :=\Phi \nonumber
	\end{align}
	Let us consider all the vertices in $\X\setminus x$ and treat them as random variables, calling them $Y_1,\dots, Y_{n-1}.$ Define $\psi_1(Y_i) =c_2\mathbbm{1}_{|x-Y_i|\leq {\eps}_2}\eps_2^{-d}$ and then it follows that 
	$$\Phi=\sum_{i=1}^{n-1} \psi_1(Y_i)=\sum_{i=1}^{n-1} c_2\mathbbm{1}_{|x-Y_i|\leq {\eps}_2}{\eps}_2^{-d}.$$
    Thus,
	$$\E[\Phi]=\sum_{Y_i \in \X\setminus x}\E[ c_2\mathbbm{1}_{|x-Y_i|\leq {\eps}_2}{\eps}_2^{-d}] = (n-1)c_2\int_{\Omega\cap B(x, \eps_2)} {\eps}_2^{-d}\rho(y)dy.$$
	We apply Bernstein's inequality (30) in \cite{calder2018game} to $\Phi$ obtaining that 	
	$$\P(|\Phi-\E(\Phi)|>c_d(n-1))\leq 2\exp\Big(\frac{-(n-1)c_d^2}{2(\sigma^2+\tfrac{1}{3}bc_d)} \Big)$$
	for $\sigma^2=Var(\psi_1(Y_i))$ and $b=\sup|\psi_1(Y_i)|.$ We observe that $b=\sup|c_2\mathbbm{1}_{|x-Y_i|\leq {\eps}_2}\eps_2^{-d}|=c_2\eps_2^{-d}$ and also 
 $$\sigma^2=Var(c_2\mathbbm{1}_{|x-Y_i|\leq {\eps}_2}\eps_2^{-d})=c_2^2\eps_2^{-2d}Var(\mathbbm{1}_{|x-Y_i|\leq {\eps}_2})\sim C\eps_2^{-d},$$ because $Var(\mathbbm{1}_{|x-Y_i|\leq {\eps}_2})$ is of order $\eps_2^d$ as we integrate over a ball of radius $\eps_2.$ 
 Thus, $\sigma^2$ and $b$ depend on $\Omega, \rho, c_2$, as well as on $\eps_2$ explicitly. 
 
 We union-bound, obtaining that there exists  two positive constants $c_5$ 
 and $c_3$ for which  
	$$\Phi \leq c_d (n-1)+(n-1)c_2\int_{\Omega\cap B(x, \eps_2)} {\eps}_2^{-d}\rho(y)dy=c_5(n-1)$$
	holds with probability at least $1-n\exp\{-c_3n\eps_2^d\}.$ 
 The same line of reasoning applies to $p_n(x)$ defined in \eqref{p_n}.
We define
 $$\psi_2(Y_i):=c_1\mathbbm{1}_{|x-Y_i|\leq{\eps}_1}{\eps}_1^{-d}\mathbbm{1}_{z(Y_i)=1}$$
 and $\hat{\Phi}=\sum_{i=1}^n\psi_2(Y_i).$ We compute
 
 $$\E[\hat{\Psi}]=\sum_{Y_i \in \X\setminus x}\E[ c_1\mathbbm{1}_{|x-Y_i|\leq {\eps}_1}{\eps}_1^{-d}\mathbbm{1}_{z(Y_i)=1}] = (n-1)c_1\beta \int_{\Omega\cap B(x, \eps_1)} {\eps}_1^{-d}\rho(y)dy.$$
 Thus there exists two positive constants $c_6$ and $c_4$ such that with probability $1-n\exp(-c_4\beta n\eps_1^d)$ we have
$$p_n(x)\geq(n-1)\beta c_6.$$
 
 Thus, there exist constants $c_5, c_6$ such that 
 \begin{equation}\label{d_n}
     d_{n}(x) < c_5(n-1)
 \end{equation} 
 and
 \begin{equation}\label{p_n_ineq}
     p_{n}(x) \geq c_6\beta (n-1)
 \end{equation}
 hold for all all $x$ with probability $1-n\exp(-c_3n\eps_2^d)-n\exp(-c_4\beta n\eps_1^d).$
 
We use the results we just obtained for \eqref{d_n} and \eqref{p_n_ineq} to estimate that
\begin{equation}
	\delta= \min_{x\in\X}\frac{p_{n}(x)}{d_{n} (x)}\geq \min\frac{c_5 (n-1)\beta}{c_6(n-1)}\geq c\beta.
\end{equation}

We now focus on proving \as1. 
	Pick any vertex $x \in X.$	Since by definition $$p_{n}(x) =\Sigma_{y \in \X}w_{xy}\mathbbm{1}_{z(y)=1}$$
	is a sum of non-negative quantities, and by \eqref{p_n_ineq} there exists a neighbor $x_\ell$ of $x$ such that 
	$$w_{xx_\ell}\mathbbm{1}_{z(x_\ell)=1} > 0.$$
	Thus, $z_\ell=1$ for some vertex $x_\ell$, therefore $x_\ell$ is labeled, and so $x$ has a neighbor in $\Gamma.$ This is a restatement of \as1. 
	This concludes the proof. 
\end{proof}
	In particular, with probability at least $1-n\exp(-c_3n\eps_2^d)-n\exp(-c_4\beta n\eps_1^d)$ assumption \as1 holds for $k$-NN graphs and $\eps$-graphs. Therefore, Theorem \ref{th1} and Theorem \ref{th2} also hold (with the same probability) for $\eps$-graphs, and $k$-NN graphs.
\begin{corollary}\label{c1}
	For an $\eps$-graph and $g\in Lip,$ we have that 
	$$|u(x) - g(x)|\leq \eps \Lip(g)\Big( \frac{2\log\frac{\sup g}{\eps \Lip(g)}}{1-\alpha +2\alpha\delta}+2\Big)$$
	holds with with probability at least $1-n\exp(-c_3n\eps^d)-n\exp(-c_4\beta n\eps^d).$
\end{corollary}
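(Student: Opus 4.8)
The plan is to obtain Corollary~\ref{c1} as a direct specialization of Theorem~\ref{th1} to the $\eps$-graph, with the probabilistic control of \as1 supplied by Theorem~\ref{th20}. First I would record that an $\eps$-graph satisfies \as5 by Lemma~\ref{A5_eps}, with the explicit scales $\eps_1=\eps/2$ and $\eps_2=\eps$. Feeding these into Theorem~\ref{th20} shows that \as1 holds on an event of probability at least $1-n\exp(-c_3 n\eps^d)-n\exp(-c_4\beta n(\eps/2)^d)$; the dimensional factor $2^{-d}$ coming from $\eps_1$ is absorbed into the constant $c_4$, which reproduces exactly the probability stated in the corollary. For the remainder of the argument I would condition on this event, so that \as1 is in force and Theorem~\ref{th1} becomes available.

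On this event Theorem~\ref{th1} yields, for every $x\in\X$,
\[
|u(x)-g(x)|\leq \left(\frac{2\log(\|g\|_\infty\|\nabla g\|_\infty^{-1})}{1-\alpha+2\alpha\delta}+2\right)\|\nabla g\|_\infty.
\]
To pass from this abstract bound to the Lipschitz-flavored statement, I would invoke the geometric estimate from the Remark following Theorem~\ref{th1}: on an $\eps$-graph every edge joins vertices at Euclidean distance at most $\eps$, so $\|\nabla g\|_\infty\leq\eps\,\Lip(g)$, with $\|g\|_\infty$ written as $\sup g$ in the corollary's notation.

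The one point requiring genuine care --- and the main obstacle --- is that enlarging $\|\nabla g\|_\infty$ to $\eps\,\Lip(g)$ is not automatic here, since the quantity $\|\nabla g\|_\infty$ also sits inside a logarithm, where increasing it \emph{shrinks} the log factor while growing the linear factor. I would settle this by examining the monotonicity of
\[
f(\zeta)=\zeta\left(\frac{2}{1-\alpha+2\alpha\delta}\log(\|g\|_\infty/\zeta)+2\right).
\]
A direct differentiation gives $f'(\zeta)=\tfrac{2}{1-\alpha+2\alpha\delta}\log(\|g\|_\infty/\zeta)-\tfrac{2}{1-\alpha+2\alpha\delta}+2$, so that $f'(\zeta)>0$ precisely when $\log(\|g\|_\infty/\zeta)>\alpha(1-2\delta)$. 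This holds for every $\zeta\leq\eps\,\Lip(g)$ once $\eps$ is small enough that $\eps\,\Lip(g)$ lies comfortably below $\|g\|_\infty$, which is the regime of interest. In that range $f$ is increasing, whence $f(\|\nabla g\|_\infty)\leq f(\eps\,\Lip(g))$.

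Combining the three steps then gives, on the stated event,
\[
|u(x)-g(x)|\leq f(\|\nabla g\|_\infty)\leq f(\eps\,\Lip(g))=\eps\,\Lip(g)\left(\frac{2\log\frac{\sup g}{\eps\,\Lip(g)}}{1-\alpha+2\alpha\delta}+2\right),
\]
which is exactly the claimed inequality. I expect the probability bookkeeping and the application of Theorems~\ref{th20} and~\ref{th1} to be routine; the only substantive content is the monotonicity check that legitimizes the substitution $\|\nabla g\|_\infty\mapsto\eps\,\Lip(g)$.
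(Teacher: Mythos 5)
Your proposal is correct and follows essentially the same route as the paper: invoke Lemma~\ref{A5_eps} and Theorem~\ref{th20} to get \as1 (and hence Theorem~\ref{th1}) with the stated probability, then substitute $\|\nabla g\|_\infty \leq \eps\,\Lip(g)$, absorbing the factor $2^{-d}$ from $\eps_1=\eps/2$ into $c_4$. The one place you go beyond the paper is the monotonicity check on $f(\zeta)=\zeta\bigl(\tfrac{2}{1-\alpha+2\alpha\delta}\log(\|g\|_\infty/\zeta)+2\bigr)$: the paper's proof simply cites Theorem~\ref{th1} and the Lipschitz bound without addressing the fact that $\|\nabla g\|_\infty$ also appears inside the logarithm, where enlarging it decreases that factor, so your verification that $f'(\zeta)>0$ whenever $\log(\|g\|_\infty/\zeta)>\alpha(1-2\delta)$ (which holds in the regime $\eps\,\Lip(g)\ll\|g\|_\infty$ of interest) is a genuine and worthwhile tightening of the argument rather than a detour.
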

\begin{proof}
	The statement of Corollary \ref{c1} holds from  Theorem \ref{th1}, the 
	observation $|g(z) - g(y)| \leq \Lip(g) |z-y|$, and the fact that the distance between nearby nodes $|z-y|$ is of size $\eps.$
\end{proof}

\subsection{Stochastic block model graphs}\label{sec:stochasticblockmodels}

In this section we apply our ideas and results from Section \ref{sec:RGG} to the stochastic block model (SBM) graph. \red Since SBM graphs have \emph{discrete} geometric structure, as opposed to the \emph{continuous} geometric structure of a random geometric graph, there is no obvious relationship to a continuum PDE on an SBM graph. Nevertheless, the tug-of-war lemma provides useful results in this setting.\nc

We first define an SBM graph. The edge weights are binary $w_{xy}=1$ if there is an edge between $x$ and $y$, and $w_{xy}=0$ if there is no edge.  We partition the vertices $\X$ of the SBM graph into two disjoint blocks $\X=\X_0\cup \X_1$ according to an underlying binary label function $g:\X\to \{0,1\}$, as in \eqref{eq:xi}.  We define $r,q\in (0,1)$ as the two key probabilities for an SBM graph: $r$ is the probability of an \textit{intra-class} edge and $q$ is the probability of an \textit{inter-class} edge. In other words,
\begin{align*}
	&\P(w_{xy}=1)= r \ \ \ \text{if $x,y\in \X_i$ for $i=0$ or $i=1$}\\
	&\P(w_{xy}=1)= q \ \ \ \text{if $x\in \X_i$ and $y\in \X_{1-i}$  for $i=0$ or $i=1$.}
\end{align*}
By \as3, the probability that any vertex is labeled is $\beta$, thus only a portion $\Gamma \in \X$ is labeled. The rest of $\X$ is unlabeled and the goal is to label the points in $\X_0$ and $\X_1$ correctly. We also define $N_0=|\X_0|$, $N_1=|\X_1|$ and $n =|\X|=N_0+N_1$. Let us set $\Gamma_i = \Gamma\cap \X_i$ so that $\Gamma=\Gamma_0 \cup \Gamma_1$. 

Now, let $u$ be the solution of the $p$-Laplace equation \eqref{eq:ssl_pLap}, and define the two sets $\Y_0$ and $\Y_1$ as follows: we assign $x$ to $\Y_0$ whenever $u(x)\leq \frac{1}{2},$ and we assign  $x$ to $\Y_1$ whenever $u(x)> \frac{1}{2}$. That is, $\Y_0$ is the set of vertices classified as class $0$ by $p$-Laplace learning, and $\Y_1$ is the set classified as class $1$. In order to classify correctly, we need $\Y_0=\X_0$ and $\Y_1=\X_1$.

Consider a vertex $x_i \in \X.$ Let $\gamma_i$ be the ratio of the number of other-class neighbours to the number of neighbors of vertex $x_i$, that is, if $x_i \in \X_j$, $j\in \{0,1\}$, then
\begin{equation}\label{eq:gammai}
\gamma_i = \frac{\sum_{x\in \X_{1-j}}w_{x_i,x}}{\sum_{x\in \X}w_{x_i,x}}.
\end{equation}
For a vertex $x_i$, the quantity $\gamma_i$ is exactly the probability that a random walk step will jump to the opposite block. Let $\beta_i$ be the fraction of neighbors from the same class that are labeled. Thus, if $x_i\in \X_j$ with $j\in \{0,1\}$ then 
\begin{equation}\label{eq:betai}
\beta_i = \frac{\sum_{x\in \X_{j}\cap \Gamma}w_{x_i,x}}{\sum_{x\in \X_j}w_{x_i,x}}.
\end{equation}
The term $\beta_i$ is the probability that the random walk exits at at labeled node in the same class on the next step, conditioned on the event that the random walk remains in the same class. 


We now derive upper and lower bounds for $\gamma_i$ and $\beta_i$.  The lower bound for $\beta_i$ implies that \as1 holds with high probability, but we do not use \as1 directly in this section.
\begin{lemma}\label{lem:sbm_properties}
Let us consider an SBM graph with parameters $N_0,N_1,r,q$, and label rate $\beta$. Let $\sigma_1 \geq 0$ and $0 \leq \sigma_2 < 1$. Then, with probability of at least 
\begin{equation}\label{eq:sbm_prob}
1 - \sum_{j=0}^1N_j\left\{\exp\left(-\frac{qN_{1-j} \sigma_1^2}{2(1 + \tfrac13 \sigma_1)}\right) - 3\exp\left(-\frac{1}{8} \beta rN_j \sigma_2^2 \right)\right\}.
\end{equation}
for any $x_i \in \X_j$, $j\in \{0,1\}$, we have
\begin{equation}\label{eq:gammai_bound}
\gamma_i \leq \frac{(1+\sigma_1)qN_1}{(1+\sigma_1)qN_1 + (1-\sigma_2)r(N_0-1)},
\end{equation}
and
\begin{equation}\label{eq:betai_bound}
\beta_i \geq \frac{1-\sigma_2}{1+\sigma_2}\beta.
\end{equation}
\end{lemma}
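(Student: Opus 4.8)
I would fix a vertex $x_i \in \X_j$ with $j\in\{0,1\}$ and reduce both bounds to the concentration of three binomial counts. Writing out the random edges and labels, the number of inter-class neighbors $A = \sum_{x\in\X_{1-j}} w_{x_i,x}$ is $\mathrm{Bin}(N_{1-j},q)$, the number of intra-class neighbors $B = \sum_{x\in\X_j} w_{x_i,x}$ is $\mathrm{Bin}(N_j-1,r)$ (the self term vanishes since there are no self-loops), and the number of labeled intra-class neighbors $C = \sum_{x\in\X_j\cap\Gamma} w_{x_i,x}$ is $\mathrm{Bin}(N_j-1,r\beta)$, because each same-class vertex is both adjacent (prob.\ $r$) and labeled (prob.\ $\beta$) independently. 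In this notation $\gamma_i = A/(A+B)$ and $\beta_i = C/B$, and I would use that $t\mapsto A/(A+t)$ is decreasing while $s\mapsto s/B$ is increasing: it therefore suffices to bound $A$ from above, $C$ from below, and $B$ from both sides.

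For the upper bound on $A$ I would apply Bernstein's inequality exactly as in the proof of Theorem \ref{th20}, with deviation $t=\sigma_1 qN_{1-j}$, variance proxy $N_{1-j}q(1-q)\le qN_{1-j}$, and range $1$; this yields precisely $\P(A\ge(1+\sigma_1)qN_{1-j})\le\exp\bigl(-qN_{1-j}\sigma_1^2/(2(1+\tfrac13\sigma_1))\bigr)$, the $\sigma_1$-term in \eqref{eq:sbm_prob}. For $B$ and $C$ I would invoke the multiplicative Chernoff tails on the lower tail $\P(B\le(1-\sigma_2)r(N_j-1))$, the upper tail $\P(B\ge(1+\sigma_2)r(N_j-1))$, and the lower tail $\P(C\le(1-\sigma_2)r\beta(N_j-1))$. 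Each is at most $\exp(-c\sigma_2^2\mu)$ for the relevant mean $\mu\in\{r(N_j-1),\,r\beta(N_j-1)\}$; since $\beta<1$ the smallest mean is $r\beta(N_j-1)$, so after the routine simplification $N_j-1\ge N_j/2$ and collecting a universal constant all three are bounded by the common quantity $\exp(-\tfrac18\beta rN_j\sigma_2^2)$, which is the $3\exp(\cdot)$-term in \eqref{eq:sbm_prob}.

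On the complement of these four bad events, the monotonicity observed above gives
\begin{equation*}
\gamma_i=\frac{A}{A+B}\le\frac{(1+\sigma_1)qN_{1-j}}{(1+\sigma_1)qN_{1-j}+(1-\sigma_2)r(N_j-1)},
\qquad
\beta_i=\frac{C}{B}\ge\frac{(1-\sigma_2)r\beta(N_j-1)}{(1+\sigma_2)r(N_j-1)}=\frac{1-\sigma_2}{1+\sigma_2}\beta,
\end{equation*}
which are \eqref{eq:gammai_bound} and \eqref{eq:betai_bound}; the displayed $\gamma_i$-bound corresponds to the representative case $x_i\in\X_0$, the case $x_i\in\X_1$ being identical after interchanging the roles of $\X_0$ and $\X_1$. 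I would then union bound: for each of the $N_j$ vertices in $\X_j$ there is one $\sigma_1$-event and three $\sigma_2$-events, so summing the failure probabilities over $j\in\{0,1\}$ gives the total failure bound $\sum_{j=0}^1 N_j\{\exp(-qN_{1-j}\sigma_1^2/(2(1+\tfrac13\sigma_1)))+3\exp(-\tfrac18\beta rN_j\sigma_2^2)\}$, and subtracting from $1$ yields \eqref{eq:sbm_prob}.

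The one genuinely delicate point is the bookkeeping of constants: selecting the Chernoff variant and the slack $N_j-1\ge N_j/2$ that produce the stated $\tfrac18$, and verifying the Bernstein constant $2(1+\tfrac13\sigma_1)$. By contrast, the mild dependence between $B$ and $C$ (the labeled neighbors being a $\beta$-thinning of the neighbors) is harmless, since I only ever union bound over the individual tail events and never require their joint independence.
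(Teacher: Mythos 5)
Your proposal is correct and follows essentially the same route as the paper: fix a vertex, reduce $\gamma_i$ and $\beta_i$ to the three binomial counts (inter-class neighbors, intra-class neighbors, labeled intra-class neighbors), apply Chernoff/Bernstein tails with the same deviations $\sigma_1,\sigma_2$, and union bound over vertices, absorbing constants via $N_j-1\ge N_j/2$. Your explicit remarks on the monotonicity reduction and on the harmlessness of the dependence between the neighbor count and its $\beta$-thinning are points the paper leaves implicit, and your final failure probability correctly carries a $+$ between the two exponential terms where the paper's displayed bound has an evident sign typo.
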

\begin{proof}
Pick a vertex $x_i\in \X_0.$
Define the following notation:
\[ Y_j = \begin{cases}
1,&\text{if } x_i \text{ is connected to } x_j \in\X_1\\
0,&\text{otherwise.}\end{cases}\]
\[ Z_j = \begin{cases}
1,&\text{if } x_i \text{ is connected to } x_j \in\X_0 \\
0,&\text{otherwise.}\end{cases}\]
Denote $A=\sum_{j=1}^{N_1}Y_j$, $B=\sum_{j=1}^{N_0}Z_j$ and note that by \eqref{eq:gammai} we have
\[\gamma_i =\frac{A}{A+B} = \frac{1}{1 + C}, \ \ C = \frac{B}{A}.\]
We now apply the Chernoff bounds (see e.g. \cite{boucheron2013concentration}) for $A$ and $B$ to obtain
\begin{align*}
\P(A\geq (1+\sigma_1)qN_1)&\leq \exp\left(-\frac{qN_1 \sigma_1^2}{2(1 + \tfrac13 \sigma_1)}\right) \\
\P\left(|B - r(N_0-1)| \leq \sigma_2r(N_0-1) \right)&\leq 2\exp\left(-\frac{3}{8} r(N_0-1) \sigma_2^2 \right),
\end{align*}
for any $\sigma_1 > 0$ and $0 \leq \sigma_2 < 1$. Therefore
\[C \geq \frac{(1-\sigma_2)r(N_0-1)}{(1+\sigma_1)qN_1}\]
holds with probability at least 
\[1 - \exp\left(-\frac{qN_1 \sigma_1^2}{2(1 + \tfrac13 \sigma_1)}\right) - 2\exp\left(-\frac{3}{8} r(N_0-1) \sigma_2^2 \right).\]
Assuming this event holds, we then have 
\[\gamma_i \leq \frac{1}{1+C} \leq \frac{(1+\sigma_1)qN_1}{(1+\sigma_1)qN_1 + (1-\sigma_2)r(N_0-1)} = \frac{(1+\sigma)qN_1}{(1+\sigma)qN_1 + r(N_0-1)},\]
which establishes \eqref{eq:gammai_bound}, upon union bounding over $\X_0$.

Now, let $V_i$ be \iid~Bernoulli($\beta$) for $i=1,\dots,n$ so that $x_i\in \Gamma$ if and only if $V_i=1$. Then 
\[\beta_i = \frac{1}{B}\sum_{j=1}^n Z_jV_j.\]
Since $Z_jV_j$ is Bernoulli($\beta r$) we have
\[\P\left(\sum_{j=1}^n Z_jV_j \leq (1-\sigma_2)\beta r(N_0-1) \right)\leq \exp\left(-\frac{3}{8} \beta r(N_0-1) \sigma_2^2 \right).\]
Assuming this event holds as well yields
\[\beta_i \geq \frac{(1-\sigma_2)\beta r(N_0-1)}{(1+\sigma_2)r(N_0-1)} = \frac{1-\sigma_2}{1+\sigma_2}\beta,\]
which establishes, with a union bound, \eqref{eq:betai_bound}.  The argument is similar for $x_i \in \X_1$.  The probabilities are simplified by combining like terms and using that $N_i -1 \geq N_i/2$ since $N_i \geq 2$. 
\end{proof}

Now we prove the main result for SBM graphs.

\begin{theorem}\label{thm:sbm} 
Let $2 \leq p<\infty$ and consider an SBM graph with parameters $r,q,N_0,N_1$, and label rate $\beta$. Let $\sigma_1 \geq 0$, $0 \leq \sigma_2 < 1$ and assume that
\begin{equation}\label{eq:rq_lower}
\frac{r}{q} \beta >(1+\sigma)\max\left\{\tfrac{N_0}{N_1-1},\tfrac{N_1}{N_0-1} \right\},
\end{equation}
where
\begin{equation}\label{eq:sigma_def}
1 + \sigma = \frac{(1+\sigma_1)(1+\sigma_2)}{(1-\sigma_2)^2}.
\end{equation}
Then the solution $u$ to \eqref{eq:ssl_pLap} classifies all vertices correctly with probability at least \eqref{eq:sbm_prob}.
\end{theorem}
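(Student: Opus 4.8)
The plan is to work throughout on the high-probability event of Lemma \ref{lem:sbm_properties}, on which the stated bounds on $\gamma_i$ and $\beta_i$ hold simultaneously for every vertex, and to show that on this event \emph{every} vertex is classified correctly; the probability \eqref{eq:sbm_prob} is then inherited verbatim from Lemma \ref{lem:sbm_properties}. By the symmetry of the two blocks, and of the two terms in the max in \eqref{eq:rq_lower}, it suffices to take $x\in\X_0$ and prove $u(x)<\tfrac12$; the case $x\in\X_1$ follows by running the mirror-image process obtained by replacing $(u,g)$ with $(-u,-g)$ (which solves the same equation with data $-g$ and rushes to class-$1$ labels), together with the other term in the max. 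First I would apply the tug-of-war Lemma \ref{lem:main_tug} to the process of Section \ref{sec:RGG} started at $X_0=x$, which gives $u(x)\le \E[g(X_\tau)]=\P(X_\tau\in\Gamma_1)$ since $g$ is $\{0,1\}$-valued.

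The heart of the argument is then a one-dimensional race. Since $\Gamma_1\subset\X_1$, the event $\{X_\tau\in\Gamma_1\}$ forces the token to enter block $\X_1$ at some point, so $\P(X_\tau\in\Gamma_1)$ is bounded by the probability $\P(E)$ that the token crosses into $\X_1$ before it is absorbed at a class-$0$ label. I would analyze this as a competition, at each step taken from a block-$0$ vertex, between a bad crossing $C$ (into $\X_1$) and a good absorption $G$ (at a class-$0$ label), all other outcomes being loops inside $\X_0$; summing the geometric series over the loops gives $\P(E)\le \P(C)/(\P(C)+\P(G))$, which is below $\tfrac12$ precisely when $\P(C)<\P(G)$. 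The crossing event $C$ collects at most $\tfrac{1-\alpha}{2}$ from Carol's maximizing move (step 2, worst case a jump into $\X_1$) and $\alpha\gamma_i$ from the random-walk move, so $\P(C)\le \tfrac{1-\alpha}{2}+\alpha\gamma_i$; the good event $G$ collects $\tfrac{1-\alpha}{2}$ from Paul's rushing move (step 3, which lands on a class-$0$ label because $\beta_i>0$) and $\alpha(1-\gamma_i)\beta_i$ from the random walk, so $\P(G)\ge \tfrac{1-\alpha}{2}+\alpha(1-\gamma_i)\beta_i$. Because $\tfrac{x}{x+y}$ is increasing in $x$ and decreasing in $y$, these one-sided bounds combine to $\P(E)\le \frac{\tfrac{1-\alpha}{2}+\alpha\gamma_i}{(\tfrac{1-\alpha}{2}+\alpha\gamma_i)+(\tfrac{1-\alpha}{2}+\alpha(1-\gamma_i)\beta_i)}$.

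It remains to check that the right-hand side is strictly below $\tfrac12$. Cancelling the common $\tfrac{1-\alpha}{2}$ and dividing by $\alpha$ (here $p<\infty$ is used, so that $\alpha=1/(p-1)>0$) reduces this to the clean, $\alpha$-free inequality $\gamma_i<(1-\gamma_i)\beta_i$, equivalently $\gamma_i/(1-\gamma_i)<\beta_i$. I would then close the proof by feeding in Lemma \ref{lem:sbm_properties}: its upper bound on $\gamma_i$ gives $\gamma_i/(1-\gamma_i)\le (1+\sigma_1)qN_1/\big((1-\sigma_2)r(N_0-1)\big)$, its lower bound gives $\beta_i\ge \tfrac{1-\sigma_2}{1+\sigma_2}\beta$, and the hypothesis \eqref{eq:rq_lower} with $1+\sigma$ as in \eqref{eq:sigma_def} is exactly the statement that the former is strictly smaller than the latter. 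This yields $u(x)<\tfrac12$ on $\X_0$, and the mirror argument yields $u(x)>\tfrac12$ on $\X_1$, so all vertices are classified correctly on the event of Lemma \ref{lem:sbm_properties}.

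The main obstacle, and the step I would be most careful with, is the adversarial maximizing move (Carol's step 2), which can drive the token directly into $\X_1$ and is \emph{not} controlled by the distance arguments used for Theorem \ref{th2}, since in an SBM the two classes are adjacent and the graph distance to the decision boundary is $1$. The resolution is structural rather than geometric: Carol's worst-case move and Paul's rushing move each occur with the same probability $\tfrac{1-\alpha}{2}$ and therefore cancel in the race, so the outcome is decided entirely by the random-walk drift, i.e.\ by whether $\gamma_i<(1-\gamma_i)\beta_i$. This also explains why the hypothesis $p<\infty$ is sharp: as $\alpha\to0$ the random-walk term vanishes and the race becomes a dead heat at exactly $\tfrac12$.
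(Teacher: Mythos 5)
Your proposal is correct and follows essentially the same route as the paper's proof: the same good/bad/loop decomposition of each step from an unlabeled same-class vertex, the same per-step bounds $\tfrac{1-\alpha}{2}+\alpha\gamma_i$ and $\tfrac{1-\alpha}{2}+\alpha(1-\gamma_i)\beta_i$, the same reduction to $\beta_i>\gamma_i/(1-\gamma_i)$ after the $\tfrac{1-\alpha}{2}$ terms cancel, and the same application of Lemma \ref{lem:sbm_properties}. The only cosmetic difference is that the paper stops the process upon first entering $\X_1$ and uses $0\le u\le 1$, whereas you stop at $\Gamma$ via Lemma \ref{lem:main_tug} and bound by the crossing event; your closing observation about the $\alpha$-free condition is exactly the content of Remark \ref{rem:sbm_p}.
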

\begin{remark}\label{rem:sbm_p}
Note that the condition $p<\infty$ ensures that $\alpha>0$ in \eqref{eq:dpp}, and thus there is always a positive probability of taking a random walk step. The proof of Theorem \ref{thm:sbm} relies entirely on the random walk, and allows the two players to balance each other out. We expect this is sub-optimal and is why the value of $p$ does not explicitly enter the condition on $\tfrac{r}{q}$. 
\end{remark}
\begin{proof}[Proof of Theorem \ref{thm:sbm}]
We fix $\sigma_1 \geq 0$ and $0 \leq \sigma_2 \leq \frac{1}{2}$ and assume that the conclusions of Lemma \ref{lem:sbm_properties} hold.  We consider the stochastic tug of war process $X_0,X_1,\dots,$ starting from $X_0=x\in \X_0$. We define the associated random variables $W_k$ by
\[W_k = 
\begin{cases}
0,& \text{if } X_{k} \in \Gamma_0\\
1,& \text{if } X_{k}\in \X_1\\
2,& \text{if } X_{k} \in \X_0 \setminus \Gamma_0.
\end{cases}\]
The random variables $W_k$ indicate whether the process stops at $\Gamma_0$, moves to $\X_1$, or stays in $\X_0$ and does not exit.  The probabilities of each of these transitions depends on the vertex the process is currently at. If $X_{k-1} = x_i\in \X_0$ then 
\begin{align*} 
p_{0,i} := \P(W_k = 0) &= \frac{1-\alpha}{2} + \alpha(1-\gamma_i)\beta_i, \\ 
p_{1,i} := \P(W_k = 1) &\leq \frac{1-\alpha}{2} + \alpha\gamma_i, \\
p_{2,i} := \P(W_k = 2) &\geq \alpha(1-\gamma_i)(1-\beta_i),
\end{align*}
where $\gamma_i$ is defined in \eqref{eq:gammai}, and $\beta_i>0$ is fraction of same-class neighbors of $x_i$ that are labeled.  Let $\tau$ be the stopping time defined by 
\[\tau =\inf \{k: W_{k}=0 \text{ or } W_{k}=1 \}.\]
Let us define
\[p_0 = \min_{1 \leq i \leq n} p_{0,i} = \frac{1-\alpha}{2} + \alpha(1-\gamma_{max})\beta_{min} \ \ \text{ and }  \ \ p_{1}=  \max_{1 \leq i \leq n}p_{1,i} = \frac{1-\alpha}{2} + \alpha\gamma_{max},\]
where we write 
\[\gamma_{max} = \max_{1 \leq i \leq n}\gamma_i \ \ \text{ and } \ \ \beta_{min} = \min_{1\leq i \leq n}\beta_i.\]
By Doob's optional stopping theorem and Lemma \ref{lem:submart} we have $u(x) \leq \E[u(X_\tau)]$. \red Note that the expected value of $\tau$ is finite, since the probability of stopping at each step is $p_{0,i}+p_{1,i}\geq p_0 > 0$. \nc

Let $i_{k}$ denote the vertex index of the stochastic tug-of-war process on the $k^{\rm th}$ step, so that $X_k = x_{i_k}$. Then conditioned on $i_{\tau-1} = i$, the probability of $W_\tau=1$ is exactly $p_{1,i}/(p_{0,i} + p_{1,i})$. Therefore by the law of conditional probability, \red and the fact that $x_{i_{\tau-1}}\in \X_0$ we have
\begin{align*}
u(x) \leq \E[u(X_\tau)] \leq \P(X_\tau \in \X_1) &= \sum_{i=1}^n \frac{p_{1,i}}{p_{0,i} + p_{1,i}} \P(i_{\tau-1}=i) \\
&=\sum_{x_i\in \X_0} \frac{p_{1,i}}{p_{0,i} + p_{1,i}} \P(i_{\tau-1}=i) \leq \frac{p_1}{p_0 + p_1}.
\end{align*}\nc
Since $x\in \X_0$, its correct label is $0$, so in order to classify $x$ correctly we need to show that $u(x) < \frac{1}{2}$, which holds whenever $p_0/p_1 > 1$, or rather, $p_0 > p_1$. This is equivalent to 
\begin{equation}\label{eq:suff_cond}
\beta_{min} > \frac{\gamma_{max}}{1-\gamma_{max}}.\
\end{equation}

By Lemma \ref{lem:sbm_properties} we have
\[\gamma_{max} \leq \frac{(1+\sigma_1)qN_1}{(1+\sigma_1)qN_1 + (1-\sigma_2)r(N_0-1)},\]
and so 
\[1 - \gamma_{max} \geq \frac{(1-\sigma_2)r(N_0-1)}{(1+\sigma_1)qN_1 + (1-\sigma_2)r(N_0-1)}.\]
Therefore
\[\frac{\gamma_{max}}{1-\gamma_{max}} \leq \frac{(1+\sigma_1)qN_1}{(1-\sigma_2)r(N_0-1)}.\]
Also, by Lemma \ref{lem:sbm_properties} we have
\[\beta_{min} \geq \frac{1-\sigma_2}{1+\sigma_2}\beta.\]
Thus, a sufficient condition for \eqref{eq:suff_cond} to hold is 
\[\frac{1-\sigma_2}{1+\sigma_2}\beta >\frac{(1+\sigma_1)qN_1}{(1-\sigma_2)r(N_0-1)},\]
which is equivalent to 
\[\frac{r}{q}\beta > (1+\sigma) \frac{N_1}{N_0-1}, \]
where $\sigma$ is defined in \eqref{eq:sigma_def}.  A similar argument holds for $x\in \X_1$, except with $N_0$ and $N_1$ reversed.  This concludes its proof.
\end{proof}

\subsection{Numerical results}
\label{sec:numerics}

\begin{figure}[!t]
\centering
\subfloat[$N_0=N_1=1500$]{\includegraphics[width=0.48\textwidth]{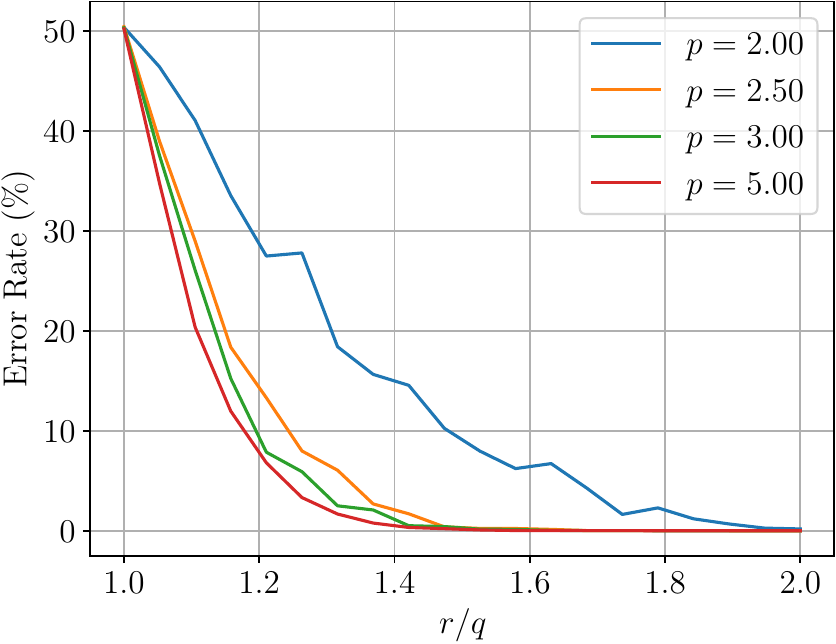}}
\hfill
\subfloat[$N_0=2000,N_1=1000$]{\includegraphics[width=0.48\textwidth]{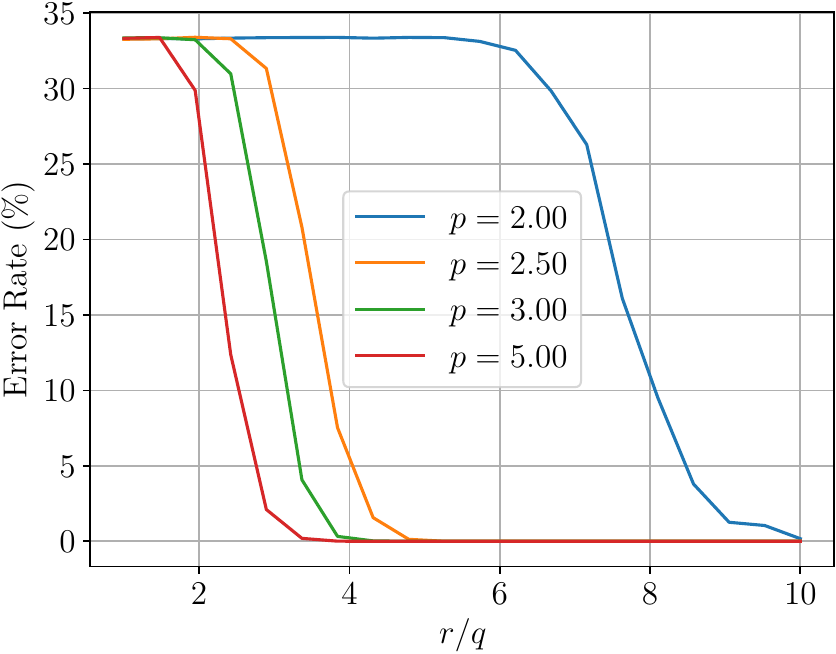}\label{fig:sbmb}}
\caption{Example of the effect of $r/q$ on the classification performance on stochastic block model graphs for various values of $p$. }
\label{fig:sbm}
\end{figure}

We present here some numerical results with synthetic and real data, in order to illustrate the main results established in this section, and the room for improvement in future work.  The Python code to reproduce the results in this section is available on GitHub.\footnote{\url{https://github.com/jwcalder/p-Laplace-consistency}}

Our first experiment is with a stochastic block model graph to illustrate Theorem \ref{thm:sbm}. We take a graph with $n=3000$ vertices and consider the case of equal block sizes $N_0=N_1=1500$ and unbalanced block sizes $N_0=2000,N_1=1000$. We take the intraclass probability to be $r=0.5$, and we vary the interclass probability $q\leq r$. In Figure \ref{fig:sbm} we show the error rates for binary classification using a labeling rate of $\beta=0.2$ as a function of the ratio $r/q$. For each value of $r/q$ we ran $100$ randomized trials and averaged the error rates. For equal size blocks $N_0=N_1$, we see that the error rate decreases rapidly when $r/q > 1$. Theorem \ref{thm:sbm} guarantees that all vertices are classified correctly when $r/q > \beta^{-1}=5$, which is a pessimistic bound in this case, since all values of $p$ classify correctly when $r/q=2$. For unequal block sizes, $N_0/N_1 =2$ in Figure \ref{fig:sbmb}, we see that for larger values of $p$, the error rate decreases quickly when $r/q > 2$, while $p=2$ requires $r/q > 8$ to see a similar decrease, and $r/q=10$ to classify correctly. In this case, Theorem \ref{thm:sbm} guarantees correct classification with high probability when $r/q > 2\beta^{-1} = 10$, which agrees very closely with the $p=2$ result, but is pessimistic for $p > 2$. Thus, our results may be tight for $p=2$, but there is clearly much room for improvement in the range $p>2$. In particular, we currently cannot explain why the results in Figure \ref{fig:sbmb} are so dramatically better as $p$ increases. An analysis of this sort would presumably require a different approach to Theorem \ref{thm:sbm} that exploited the tug-of-war game as well.

To illustrate our results on geometric graphs we turn to real data. We consider the MNIST data set which consists of $70000$ images of handwritten digits between $0$ and $9$ \cite{lecun1998gradient}, with each image a $28\times 28$ grayscale image. We also consider the Cifar-10 data set \cite{krizhevsky2009learning} which contains $60000$ natural images from 10 classes (airplane, automobile, bird, cat, deer, dog, frog, horse, ship, truck). Each Cifar-10 image is a $32\times 32$ pixel color image. To make the problem more similar to our setting, we restrict the data sets from 10 classes down to 2 so we have a binary classification problem. For MNIST we use the 4s and 9s, which are one of the harder pairs of digits to separate, while for Cifar-10 we use the deer and dogs.  Figure \ref{fig:image_sample} shows some example images from each data set.

\begin{figure}[!t]
\centering
\subfloat[MNIST]{\includegraphics[height=0.215\textheight]{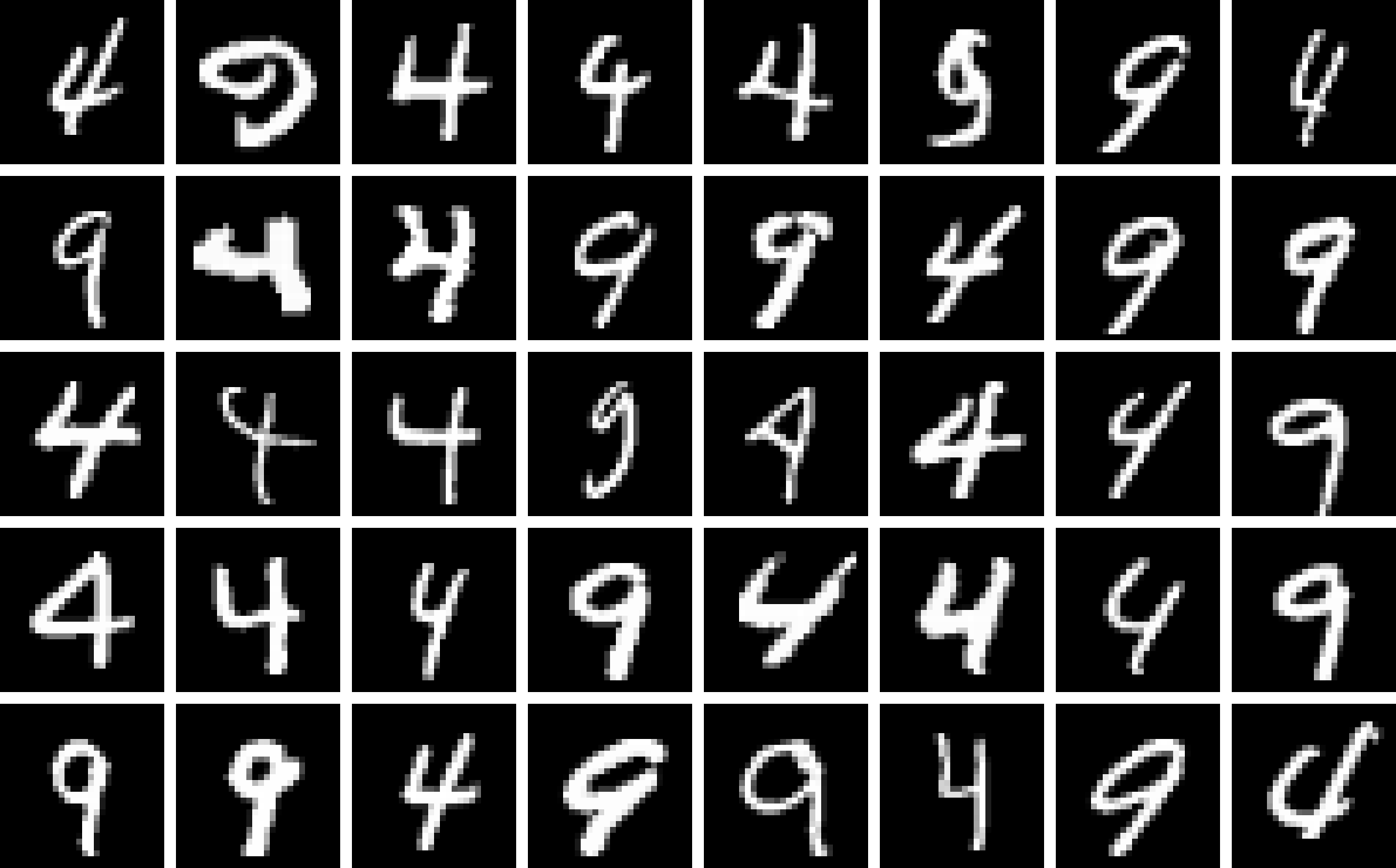}}
\hfill
\subfloat[Cifar-10]{\includegraphics[height=0.215\textheight]{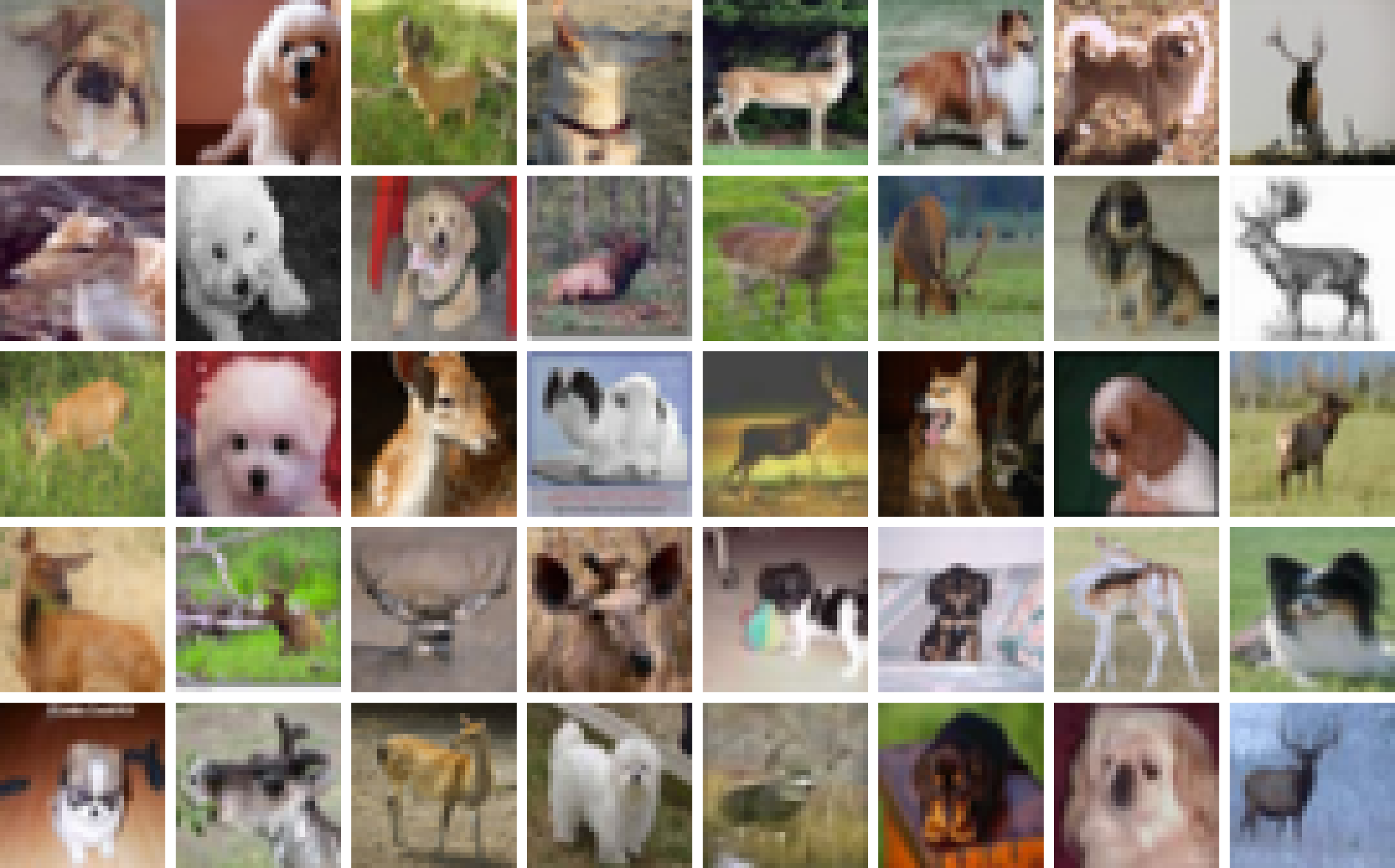}}
\caption{Sample of MNIST 4s and 9s, and Cifar-10 deer and dogs.}
\label{fig:image_sample}
\end{figure}

In order to obtain a good feature embedding for constructing the graph, we used a variational autoencoder \cite{kingma2013auto} for MNIST and the contrastive learning SimCLR method \cite{chen2020simple} for Cifar-10. Both methods are unsupervised deep learning algorithm that learn feature embeddings, or representations, of image data sets that maps the images into a latent space identified with $\R^k$ where the similarity in latent features is far more informative of image similarity than pixel-wise similarity. After embedding the images into the feature space, we constructed a $10$ nearest neighbor graph using the angular similarity, as in \cite{calder2020poisson}. 

\begin{figure}[!t]
\centering
\subfloat[MNIST]{\includegraphics[width=0.48\textwidth]{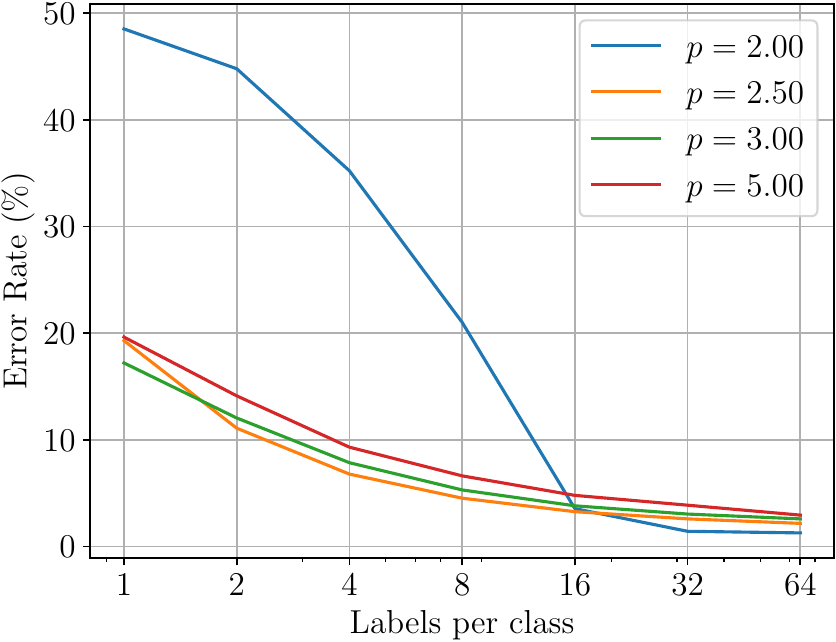}}
\hfill
\subfloat[Cifar-10]{\includegraphics[width=0.48\textwidth]{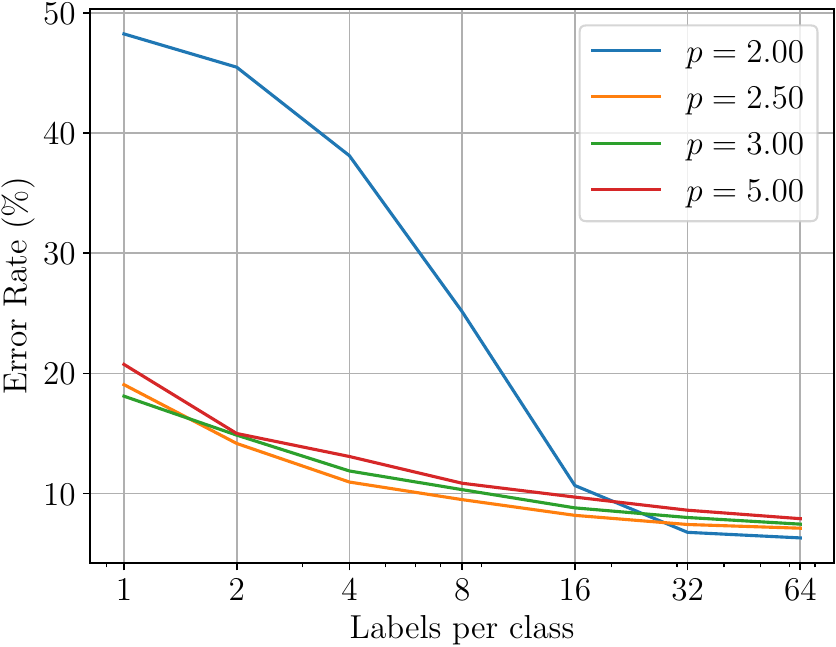}}
\caption{Accuracy for binary classification of two classes in the MNIST and Cifar-10 data sets using $p$-Laplacian regularization with different values of $p$. We used the 4s and 9s from MNIST, which are one of the more difficult pairs to separate, and the deer and trucks from Cifar-10.}
\label{fig:images}
\end{figure}

For our experiment we used 1 up to 64 labeled examples per class for each data set, ran 100 trials at each label rate, and reported the average error rate. Figure \ref{fig:images} shows the error rates for different values of $p$ as a function of label rate. We can see that larger values of $p$ give better classification performance at very low label rates, as expected from previous work \cite{flores2022algorithms}. Once the label rate is sufficiently large, the error rate becomes small. As predicted in Theorem \ref{th2}, the only points that are misclassified are those that are close to the boundary between classes in terms of graph distance, which are a small fraction of the data points --- based on Figure \ref{fig:images} about 2-4\% for MNIST and 5-10\% for Cifar-10.


\section{Conclusions and future work}
\label{sec:conc}

In this paper, we gave a thorough overview of the intersection between graph-based semi-supervised learning and PDEs, and highlighted problems focused on consistency of classification that have not received significant attention yet in the community. We presented some preliminary results on consistency of $p$-Laplacian based semi-supervised learning. Our results use the stochastic tug-of-war interpretation of the $p$-Laplacian on a graph, and we also provided a brief overview of this field. We proved consistency results for general graphs, geometric graphs, and stochastic block model graphs, the latter of which are not usually covered by continuum PDE-based arguments. One of our findings is that the tug-of-war game transfers nicely between different graph structures, and does not require the geometric structure of the graph. We also presented numerical experiments on synthetic and real data that illustrated our results and suggested directions for future work. 

We highlight below some open problems for future work.
\begin{enumerate}
\item \textbf{Relaxing the assumption \as1}. The assumption \as1 \red is a strong simplifying assumption we made in order to obtain preliminary results. It \nc asks that every vertex on the graph has a labeled neighbor, and is used to make our results tractable. In practice, this means that, in the geometric graph setting, labels do not propagate very far on the graph. It would be very interesting, and more practically relevant, to extend these results to settings where \as1 does not hold. This would require a far more delicate martingale analysis than we provided in this paper. 
\item \textbf{Stochastic block models}. Our proof technique in Theorem \ref{thm:sbm} for the stochastic block model graph only exploited the random walk and ignored the tug-of-war component of the game. The numerical results in Section \ref{sec:numerics} suggest that the classification accuracy improves dramatically as $p>2$ increases, even at moderately large label rates --- 20\% in this case. In order to explain this, it would seem necessary to improve Theorem \ref{thm:sbm} by utilizing the tug-of-war game as well, so that the condition \eqref{eq:rq_lower} on $r/q$ depends on $p$.
\item \red \textbf{Noise and corruption:} In real world applications, we may observe noisy or corrupted labels $g = g^\dagger + \xi$, where $\xi(y)$ for $y\in \Gamma$ are independent and identically distributed random variables. For simplicity assume they have mean zero and variance $\sigma^2>0$. The results in this paper would establish that we recover $g$, while the true goal is to recover the clean uncorrupted labels $g^\dagger$. Applying the sub-martingale estimate Lemma \ref{lem:main_tug} in this case and subtracting $g^\dagger$ from both sides we obtain
\[|u(x) - g^\dagger(x)| \leq \E[|g^\dagger(X_\tau) - g^\dagger(x) \, | \, X_0=x] + \left|\E[ \xi(X_\tau)|\,  \, X_0=x]\right|,\]
where the expectation is over the random walk, and not the independent noise $\xi$.  To recover $g^\dagger$, we need to be able to show that the last term above is small, either in expectation or with high probability. Without the absolute values it is given by
\[\E[ \xi(X_\tau)\, | \, X_0=x] = \sum_{y\in \Gamma}\P(X_\tau = y) \xi(y).\]
If we assume the distribution of the stopping vertex $X_\tau$ is independent of the noise $\xi$, then the variance of this term with respect to the noise is given by
\begin{equation}\label{eq:vargamma}
\Var\E[ \xi(X_\tau)\, | \, X_0=x] = \sigma^2\sum_{y\in \Gamma}\P(X_\tau = y)^2.
\end{equation}
Thus, understanding the noisy setting requires a finer analysis of the stochastic process so that we can estimate the $\ell^2$ norm of the probability distribution of $X_\tau$. If $X_\tau$ is well-spread out on $\Gamma$, say  $\P(X_\tau=y) = \frac{1}{\#\Gamma}$, then the variance \eqref{eq:vargamma} is small: $\sigma^2/\#\Gamma$. On the other hand, if the distribution concentrates on a single node, $\P(X_\tau=y)=\delta_{y=z}$ for some $z\in \Gamma$, then the variance is $\sigma^2$; the same as that of the noise $\xi(z)$. Essentially, the more labels we average over to compute $u(x)$, the smaller the effect of noise. Estimating the variance in \eqref{eq:vargamma} appears to be a non-trivial undertaking that we leave for future work. In particular, $X_\tau$ will generally only be independent of $\xi$ when $p=2$; for $p>2$ the variance computation in \eqref{eq:vargamma} will also contain covariance terms that may be difficult to control.  

On the other hand, we expect the SBM analysis in Theorem \ref{thm:sbm} to extend in a fairly direct way to the setting where some fraction $\theta\in (0,1)$ of the labels are corrupted by flipping to the opposite class. In this case, we expect a similar result to hold where $\beta_i$ decreases, and $\gamma_i$ increases, by an amount proportional to $\theta$, and then the left hand side of \eqref{eq:rq_lower} is decreased by a factor like $1-\theta$. We leave this to a future study as well. 
\nc
\item \textbf{Extension to other random-walk models}. It would be interesting to extend these results to other models that have random walk interpretations, including Poisson learning \cite{calder2020poisson}, PWLL \cite{calder2023poisson,miller2023active}, and the properly weighted Laplacian \cite{calder2020properly}. Some of the same high level ideas may work, but we expect many of the ingredients to be different. The case of Laplace learning was essentially already studied in \cite{calder2023rates}.
\item \textbf{Similar results for other models}. There are a number of models that do not have random walk interpretations, such as the variational $p$-Laplacian \cite{el2016asymptotic,slepcev2019analysis} and the MBO methods \cite{merkurjev2013mbo,hu2013method,garcia2014multiclass,boyd2018simplified,merkurjev2018semi,merkurjev2014diffuse}. It would be interesting to prove similar consistency results for these methods, though the techniques would be substantially different, since as far as we are aware, there are no representation formulas that express the solutions through stochastic processes in these works. 
\end{enumerate}




\bibliography{main}
\bibliographystyle{calder}

\end{document}